\newtheorem{thm}{Theorem}
\newtheorem{prop}{Proposition}
\newtheorem{rmk}{Remark}
\def\A{\mathcal{A}}
\def\X{\mathcal{X}}
\newcommand{\dsum}{\displaystyle\sum}
\newcommand{\dmin}{\displaystyle\min}
\newcommand{\dmax}{\displaystyle\max}
\def\R{\mathbb{R}}
\pgfplotsset{compat=newest}
\let\origmaketitle\maketitle
\def\maketitle{
  \begingroup
  \def\uppercasenonmath##1{} 
  \let\MakeUppercase\relax 
  \origmaketitle
  \endgroup
}
\title[]{\Large A Branch-and-Price approach for the Continuous Multifacility Monotone Ordered Median Problem}
\author[V. Blanco, R. G\'azquez, D. Ponce \MakeLowercase{and} J. Puerto]{{\large V\'ictor Blanco$^\dagger$, Ricardo G\'azquez$^\dagger$, Diego Ponce$^\ddagger$ and  Justo Puerto$^\ddagger$}\medskip\\
$^\dagger$Institute of Mathematics  (IMAG) and Dpt Quant. Methods Economics \& Business, Universidad de Granada,\\
\texttt{\{vblanco,rgazquez\}@ugr.es}\\
$^\ddagger$Institute of Mathematics (IMUS) and Dpt. Stats \& OR, Universidad de Sevilla\\
\texttt{\{dponce,puerto\}@us.es}\\
}
\date{\today}
\begin{document}

\maketitle

\begin{abstract}
 In this paper, we address the Continuous Multifacility Monotone Ordered Median Problem. This problem minimizes a monotone ordered weighted median function of the distances between given demand points in $\mathbb{R}^d$ and its closest facility among the $p$ selected, also in a continuous space. We propose a new branch-and-price procedure for this problem, and two mathehuristics. One of them is a decomposition-based procedure and the other an aggregation-based heuristic. We give detailed discussions of the validity of the exact formulations and also specify the implementation details of all the solution procedures. Besides, we assess their performance in an extensive computational experience that shows the superiority of the branch-and-price approach over the compact formulation in medium-sized instances. To handle larger instances it is advisable to resort to the matheuristics that also report rather good results.
 \keywords{Continuous location, Ordered median problems, Mixed Integer Non Linear Programming, Branch-and-price, Matheuristics, Clustering.}
 \end{abstract}

\section{Introduction\label{sec:intro}}

In the last years a lot of attention has been paid to the discrete aspects of location theory and a large body of literature has been published on this topic \citep[see, e.g.,][]{Beasley85,ELP2004,EMPR2009,GLM2010,MNPV09,MNV2010,PRR2013,PT2005}. One of the reasons of this flourish is the recent development of integer programming and the success of MIP solvers.  In spite of that, the reader might notice that  the mathematical origins of this theory emerged very close to some classical continuous problems such as  the well-known Fermat-Torricelli  or Weber problem and the Simpson problem \citep[see, e.g.,][and the references therein]{DH2002,LNS2015, Nickel2005}. However, the continuous counterparts of location problems have been mostly analyzed and solved using geometric constructions valid on the plane and the three dimension space that are difficult to extend when the dimensions grow or the problems are slightly modified to include some side constraints \citep[]{BG21, BPP2017,CCMP1995,CMP1998,FMB2005,NPR2003,PR2011}. These problems although very interesting quickly fall within the field of global optimization and they become very hard to solve.  Even those problems that might be considered as \textit{easy}, as for instance the classical Weber problem with Euclidean norms, are most of the times solved with algorithms (as the Weiszfeld algorithm, \cite{W1937}), whose convergence is unknown \citep[][]{CT1989}. Moreover, most problems studied in continuous location assume that a single facility is to be located, since their multifacility counterparts lead to difficult non-convex problems \citep[]{B19,BEP2014,B1995,CMP1998, MPR19,MTE2012, Puerto2020,R1992,VRE2013}. Apart from the applicability of these problems to find the \textit{optimal} positions of telecommunication services, these problems allow to extend most of the classical clustering algorithms, as $k$-Mean or $k$-Median approaches.

Motivated by the recent advances on discrete multifacility location problems with ordered median objectives \citep[][]{Deleplanque2020,EPR21,FPP2014,LPP17,MPP2020} and the available results on conic optimization \citep{BEP2014,Puerto2020}, we want to address a family of difficult continuous multifacility location problems with ordered median objectives and distances induced by general $\ell_\tau$-norms, $\tau \geq 1$. These problems gather the essential elements of both areas (discrete and continuous) of location analysis making their solution a challenging question. 

The continuous multifacility Weber problem has been already studied using branch-and-price methods  \citep[][]{K1997,duMerle1999,RZ2007,VM2015}. In addition, in discrete location, these techniques has also been applied to the $p$-median problem \citep[see, e.g.,][]{ASV2007}. However, to the best of our knowledge, a branch-and-price approach for location problems with ordered median objectives has been only developed for the discrete version in \cite{Deleplanque2020} beyond a multisource hyperplanes application \citep[][]{BJPP21}.

Our goal in this paper is to analyze the \textit{continuous multifacility  monotone ordered median problem} (MFMOMP, for short), in which we are given a finite set of demand points, $\A$,  and the goal is to find the optimal location of  $p$ new facilities such that: (1) each demand point is allocated to a single facility; and  (2) the measure of the goodness of the solution is an ordered weighted aggregation of the distances of the demand points to their closest facility \citep[see, e.g.,][]{Nickel2005}. We consider a general framework for the problem, in which the demand points (and the new facilities) lie in $\R^d$, the distances between points and facilities are $\ell_\tau$-norms based distances for $\tau \geq 1$ and the ordered median functions are assumed to be defined by non-decreasing monotone weights. These problems have been analyzed in \cite{BEP2016} in which the authors provide a Mixed Integer Second Order Cone Optimization (MISOCO) reformulation of the problem able to solve, for the first time, problems of small to medium size (up to 50 demand points), using off-the-shell solvers.

Our contribution in this paper is to introduce a new set partitioning-like (with side constraints) reformulation for this family of problems that allows us to develop a branch-and-price algorithm for solving it. This approach gives rise to a decomposition of the original problem into a master problem (set partitioning with side constraints) and a pricing problem that consists of a special form of the maximal weighted independent set problem combined with a single facility location problem. We compare this new strategy with the one obtained by solving the MISOCO formulations using standard solvers. Our results show that it is worth to use the new reformulation since it allows us to solve larger instances and reduce the gap when the time limit is reached. Moreover, we also exploit the structure of the branch-and-price approach to develop some new matheuristics for the problem that provide good quality feasible solutions for fairly large instances of several hundreds of demand points.

The paper is organized in six sections and one appendix. Section \ref{sec:COMP} formally describes the problem considered in this paper, namely the MFMOMP, and develops MISOCO formulations. Section \ref{c4-ss31} is devoted to present the new set partitioning-like formulation and all the details of the branch-and-price algorithm proposed to solve it. There, we present how to obtain initial variables for the restricted master problem, we discuss and formulate the pricing problem and set properties for handling it and describe the branching strategies and variable selection rules implemented in our algorithm. The next section, namely Section \ref{sec:heur}, deals with some heuristics algorithms proposed to provide solutions for large-sized instances. In this section, we also describe how to solve heuristically the pricing problem which gives rise to a matheuristic algorithm consisting of applying the branch-and-price algorithm but solving the pricing problem only heuristically (without certifying optimality). Obviously, since in this case the pricing problem does not certify optimality we cannot ensure optimality for the solution of the master problem, although we always obtain feasible solutions. In addition, we also present another aggregation heuristic based on clustering strategies that provides  bounds on the errors of the obtained solutions. Section \ref{sec:computational} reports the results of an exhaustive computational study with different sets of points. There, we compare the standard formulations with the branch-and-price approach and also with the heuristic algorithms. The paper ends with some conclusions in Section \ref{sec:conclusions}. Finally, Appendix \ref{ap:norms} reports the details of the computational experiment for different norms showing the usefulness of our approach.

\section{The continuous multifacility monotone ordered median problem}\label{sec:COMP}

In this section we describe the problem under study and fix the notation for the rest of the paper.

We are given a set of $n$ demand points in $\R^d$, $\A = \{a_1, \ldots, a_n\} \subset \R^{d}$, and $p\in \mathbb{N}$ ($p>0$). Our goal is to find $p$ new facilities located in $\R^d$ that minimize a function of the closest distances from the demand points to the new facilities.  We denote the index sets of demand points and facilities by $I=\{1, \ldots, n\}$ and $J=\{1, \ldots, p\}$, respectively. 
Several elements are involved when finding the \textit{best} $p$ new facilities to provide service to the $n$ demand points. In what follows we describe them:
\begin{itemize}
\item \textit{Closeness Measure:} Given a demand point $a_i$, $i \in I$, and a server $x \in \R^d$, we use norm-based distances to measure the point-to-facility closeness. Thus, we consider the following measure for the distance between $a_i$ and $x$:
$$
\delta_i(x) = \|a_i-x\|,
$$
\noindent where $\|\cdot\|$ is a norm in $\R^d$. In particular, we will assume that the norm is polyhedral or an $\ell_\tau$-norm (with $\tau\geq1$), i.e., $\delta_i(x) = \left(\dsum_{l=1}^d |a_{il}-x_l|^\tau\right)^{\frac{1}{\tau}}$.

\item \textit{Allocation Rule}:  Given a set of $p$ new facilities, $\mathcal{X} = \{x_1, \ldots, x_p\} \subset \R^d$, and a demand point $a_i$, $i \in I$,  once all the distances  between $a_i$ and $x_j$ ($j\in J$) are calculated,  one has to allocate the point to a single facility. As usual in the literature, we assume that each point is allocated to its closest facility, i.e., the closeness measure between $a_i$ and $\X$ is:
$$
\delta_i(\X) = \min_{x \in \X} \delta_i(x),
$$
\noindent and the facility $x\in \X$, reaching such a minimum is the one where $a_i$ is allocated  to (in case of ties among facilities, a random assignment is performed).

\item \textit{Aggregation of Distances}:  Given the set of demand points $\A$, the distances $\{\delta_i(\X): i \in I\} = \{\delta_1,\ldots,\delta_n\}$ must be aggregated. To this end, we use the family of ordered median criteria. Given $\lambda \in \R_+^n$  the $\lambda$-ordered median  function is defined as:
\begin{equation}\label{omf}\tag{OM}
   \textsf{OM}_\lambda(\A; \X)=  \dsum_{i\in I} \lambda_i \;\delta_{(i)},
\end{equation}
\noindent where $\delta_{(1)}, \ldots, \delta_{(n)}$ are defined such that $\{\delta_1,\ldots,\delta_n\}$ for all $i\in I$ and $\delta_{(1)}\leq \cdots \leq \delta_{(n)}$. Some particular choices of $\lambda$-weights are shown in Table \ref{tab:lambdas}. Note that most of the classical continuous location problems can be cast under this \textit{ordered median} framework, e.g., the multisource Weber problem, $\lambda=(1, \ldots, 1)$, or the multisource $p
$-center problem, $\lambda=(0, \ldots, 0, 1)$.
\begin{table}[h]
\centering
\begin{adjustbox}{max width=1.0\textwidth}
\small
\begin{tabular}{clc}
 \hline
Notation&$\lambda$-vector&Name\\
 \cmidrule(lr){1-1}\cmidrule(lr){2-2}\cmidrule(lr){3-3}

W&$ (1,\dots,1)$&$p$-Median\\
C&$(0,\dots,0,1)$&$p$-Center\\
K
&$(0,\ldots,0,\overbrace{1,\dots,1}^k)$&$k$-Center\\
D
&$ (\alpha,\ldots,\alpha,1)$&Centdian\\
S
&$(\alpha,\ldots,\alpha,\overbrace{1,\dots,1}^k)$&$k$-Entdian\\[0.1cm]
 A
&$(0=\frac{0}{n-1},\frac{1}{n-1},\frac{2}{n-1},\dots,\frac{n-2}{n-1},\frac{n-1}{n-1}=1)$&Ascendant\\[0.12cm]
 \hline
\end{tabular}
\end{adjustbox}
\caption{Examples of Ordered Median aggregation functions \label{tab:lambdas}}
\end{table}

\end{itemize}

Summarizing all the above considerations, given a set of $n$ demand points in $\R^d$, $\A = \{a_1, \ldots, a_n\} \subset \R^{d}$ and $\lambda \in \R_+^n$ (with $0 \leq \lambda_1\leq \cdots \leq \lambda_n$) the continuous multifacility monotone ordered median problem ($\mathbf{MFMOMP}_\lambda$) is the following:
\begin{equation*} \label{mf:0}\tag{$\mathbf{MFMOMP}_\lambda$}
\dmin_{\X=\{x_1, \ldots, x_p\} \subset \R^d} \textsf{OM}_\lambda(\A;\X).
\end{equation*}

Observe that the problem above is $\mathcal{NP}$-hard since the multisource $p$-median problem is just a particular instance of  \eqref{mf:0} where $\lambda=(1, \ldots, 1)$ \citep[see][]{sherali1988np}. In the following result we provide a suitable Mixed Integer Second Order Cone Optimization (MISOCO)  formulation for the problem. 

\begin{thm}\label{thm:1}
Let $\|\cdot\|$ be an $\ell_{\tau}$-norm in $\R^d$, where $\tau = \frac{r}{s}$ with $r, s\in\mathbb{N}\setminus\{0\}$, $r>s$ and $\gcd(r,s)=1$ or a polyhedral norm. Then, \eqref{mf:0} can be formulated as a MISOCO problem.
\end{thm}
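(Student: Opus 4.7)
The plan is to build a MISOCO formulation by modeling four ingredients separately: the assignment of demand points to facilities, the point-to-facility distances, the closest-facility allocation rule, and the ordered-median aggregation. I would introduce binary assignment variables $z_{ij}\in\{0,1\}$ with $\sum_{j\in J} z_{ij}=1$, continuous location variables $x_j\in\R^d$, pairwise distance variables $D_{ij}\ge 0$, allocated-distance variables $D_i\ge 0$, and further auxiliary variables to linearize the $\ell_\tau$-powers and the ordered-median objective.

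For the distance constraint $\|a_i-x_j\|\le D_{ij}$, the polyhedral case is immediate: writing the unit ball as $\{y:By\le \mathbf{1}\}$ gives the linear system $B(a_i-x_j)\le D_{ij}\mathbf{1}$. For an $\ell_{r/s}$-norm, I would introduce coordinate variables $u_{ijl}\ge |a_{il}-x_{jl}|$ (two linear constraints each), auxiliary variables $w_{ijl}$ subject to $w_{ijl}^s\ge u_{ijl}^r$, and the outer constraint $D_{ij}^r\ge\bigl(\sum_l w_{ijl}\bigr)^s$. These last two are rational power-cone constraints, and each admits an exact representation by a tower of rotated second-order cones via the standard Ben-Tal--Nemirovski construction, built inductively from the elementary rotated-SOC inequality $w^2\ge uv$. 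Consequently $\|a_i-x_j\|\le D_{ij}$ is SOC-representable in both cases.

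For the closest-facility rule, I would impose the big-$M$ inequalities $D_i\ge D_{ij}-M(1-z_{ij})$, with $M$ chosen as any valid upper bound on the pairwise distances (for instance, the $\|\cdot\|$-diameter of a bounded region known to contain an optimal facility configuration, e.g.\ the convex hull of $\A$). Since $\lambda\ge 0$ and the ordered-median function is non-decreasing in each $D_i$, at any optimum $D_i$ collapses to the distance to the facility picked by the $z_{ij}$'s and the optimization itself routes each point to its closest server. For the aggregation, I would exploit the non-decreasing monotonicity of $\lambda$ via the telescoping identity $\sum_{i\in I}\lambda_i D_{(i)}=\sum_{k=1}^n(\lambda_k-\lambda_{k-1})\,\Theta_k$ with $\lambda_0:=0$ and $\Theta_k=\sum_{i\ge k}D_{(i)}$ the sum of the $n-k+1$ largest $D_i$'s. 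Because the coefficients $\lambda_k-\lambda_{k-1}$ are non-negative, the classical Ogryczak--Tamir LP representation $\Theta_k=\min\{(n-k+1)t_k+\sum_{i\in I} v_{ik}:\, v_{ik}\ge D_i-t_k,\ v_{ik}\ge 0\}$ can be absorbed into the outer minimization, turning the objective into a linear function of $(t_k,v_{ik})$.

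Gluing these blocks together produces a mixed integer model whose only non-linearities are second-order cone constraints (none at all, in the polyhedral case), which is the desired MISOCO formulation. The step I expect to be the main technical obstacle is the explicit tower-of-cones reformulation of the rational power relations $w^s\ge u^r$: I would handle it inductively on $\max(r,s)$, first reducing to a dyadic exponent by padding with unit-valued auxiliary variables and then unrolling into successive applications of the elementary rotated-SOC inequality $w\ge\sqrt{uv}$. The correctness argument then reduces to checking that, at any optimum of the combined model, the big-$M$ constraints bind exactly on the selected assignment and the Ogryczak--Tamir auxiliaries attain the top-sum value $\Theta_k$, both of which follow from the monotonicity of the objective in $D_i$ and the non-negativity of $\lambda_k-\lambda_{k-1}$.
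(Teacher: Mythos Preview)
Your overall architecture matches the paper's: binary assignment, big-$M$ linking of allocated distances, and lifted representations of the norm and of the ordered aggregation. Two pieces differ from the paper but are equally valid. For the ordered median, the paper uses the assignment-problem dual ($u_i+v_k\ge\lambda_k D_i$, minimize $\sum_i u_i+\sum_k v_k$), which does not even need monotonicity of $\lambda$; your telescoping/Ogryczak--Tamir route is a legitimate alternative that exploits monotonicity directly and yields an equivalent linear block. For the polyhedral norm, the paper works with the extreme points of the dual ball rather than your primal half-space description of the unit ball; both give finitely many linear inequalities.

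There is, however, a genuine gap in your $\ell_{r/s}$ block. The inner constraint $w_{ijl}^{\,s}\ge u_{ijl}^{\,r}$ (that is, $w\ge u^{r/s}$ with $r/s>1$) is indeed a convex power-cone constraint. But the outer constraint $D_{ij}^{\,r}\ge\bigl(\sum_l w_{ijl}\bigr)^{s}$, equivalently $D\ge W^{s/r}$ with $W=\sum_l w_{ijl}$ and $s/r<1$, is \emph{not} convex: it is the epigraph of the concave function $W\mapsto W^{s/r}$. For instance, with $r=2$, $s=1$, the points $(D,W)=(0,0)$ and $(1,1)$ satisfy $D^2\ge W$, but their midpoint $(\tfrac12,\tfrac12)$ does not. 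So this set is not SOC-representable, and the Ben-Tal--Nemirovski tower cannot be applied to it. Your lifted region in $(u,w,D)$ does project down to the correct convex set $\{D\ge\|u\|_{r/s}\}$, but the lift itself is non-convex, which defeats the purpose.

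The standard repair---and what the paper invokes, citing \cite{BEP2014}---is to make the \emph{outer} constraint linear and push the power into the inner one: introduce $\xi_{ijl}\ge 0$ with the linear coupling $\sum_l \xi_{ijl}\le D_{ij}$ together with the three-dimensional power-cone constraints $u_{ijl}^{\,r}\le \xi_{ijl}^{\,s}\,D_{ij}^{\,r-s}$. Since $s+(r-s)=r$, the latter is exactly the homogeneous power cone $|y|^{a+b}\le x_1^{a}x_2^{b}$, which is convex and admits the tower-of-rotated-SOCs reformulation you describe. With this correction the rest of your plan goes through unchanged.
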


\begin{proof}
	First, assume that $\{\delta_i(\X): i \in I\} = \{\delta_1,\ldots,\delta_n\}$ are given. Then, sorting the elements and multiply them by the $\lambda$-weights can be equivalently written as the following assignment problem \citep[see][]{BEP2014,BEP2016}, whose dual problem (right side) allows to compute, alternatively, the value of  $\textsf{OM}_\lambda(\delta_1,\ldots,\delta_n)$:
	\begin{align*}
	\dsum_{i\in I} \lambda_k \delta_{(k)} =  \, \max \quad&\dsum_{i, k \in I} \lambda_k \delta_i \sigma_{ik} \qquad \qquad = & \min \quad & \dsum_{i \in I} u_i + \dsum_{k \in I} v_k\\
	\mbox{s.t.}\quad&\dsum_{k\in I} \sigma_{ik} = 1,\, \forall i \in I,& \mbox{s.t.}\quad & u_i + v_k \geq \lambda_k \delta_i,\; \forall k \in I, \\ 
	&\dsum_{i\in I} \sigma_{ik} = 1,\; \forall k \in I,&  &u_i, v_k \in\R ,\; \forall i, k \in I.\\
	&\sigma_{ik} \in [0,1],\; \forall i, k \in I.& &
	\end{align*}
	
	Now, we can embed the above representation of the ordered median aggregation of $\delta_1, \ldots, \delta_n$, into \eqref{mf:0}. On the other hand, we have to represent the allocation rule (closest distances). This family of constraints is given by
	$$
	\delta_{i} = \min_{j\in J} \|a_i - x_j\|,\; \forall i \in I.
	$$
	
	In order to represent it, we use the following set of decision variables:
	$$
	w_{ij} = \left\{\begin{array}{cl}
	1 & \mbox{if $a_i$ is allocated to $x_j$,}\\
	0 & \mbox{otherwise,}
	\end{array}\right. \;\; z_{ij} = \|a_i -x_j\|, \quad r_i = \min_{j\in J} \|a_i-x_j\|,\; \forall i \in I, j \in J. 
	$$
	
	Then, a \textit{Compact} formulation for \eqref{mf:0} is:	
	\begin{subequations}
		\makeatletter
		\def\@currentlabel{${\rm C}$}
		\makeatother
		\label{eq:C}
		\renewcommand{\theequation}{${\rm C}_{\arabic{equation}}$}
		\begin{align}
		\mathbf{(C)}\min \quad &\dsum_{i\in I} u_i + \dsum_{k \in I} v_k\nonumber\\
		\mbox{s.t.} \quad & u_i + v_k \geq \lambda_k r_{i},\; \forall i, k\in I,\label{ctr:sortf}\\
		&z_{ij} \geq \|a_i - x_j\|,\; \forall i\in I, j \in J, \label{ctr:normf} \\
		& r_i \geq  z_{ij} - M (1-w_{ij}),\; \forall i\in I, j \in J,\label{ctr:alloc1f} \\
		& \dsum_{j \in J} w_{ij} = 1,\; \forall i\in I,\label{ctr:alloc2f}\\
		& x_j \in \R^d,\; \forall j \in J, \nonumber\\
		& w_{ij} \in \{0,1\},\; \forall i\in I, j \in J,\nonumber\\
		& z_{ij} \geq 0, \; \forall i\in I,j \in J,\nonumber\\
		& r_i  \geq 0,\;  \forall i\in I,\nonumber
		\end{align}
	\end{subequations}
	
	\noindent where \eqref{ctr:alloc1f} allows to compute the distance between the points and its closest facility and \eqref{ctr:alloc2f} assures single allocation of points to facilities. Here $M$ is a big enough constant $M > \max_{i,k \in I} \|a_i - a_k\|$.
	
	Finally, in case $\|\cdot\|$ is the $\ell_{\frac{r}{s}}$-norm, constraint \eqref{ctr:normf}, as already proved in \cite{BEP2014}, can be rewritten as:
	\begin{align*}
	&t_{ijl} + a_{il} - x_{jl}\geq 0,\; \forall i\in I, j\in J, l=1, \ldots, d\\
	&t_{ijl} - a_{il} + x_{jl}\geq 0,\; \forall  i\in I, j\in J,  l=1, \ldots, d,\\
	&t_{ijl}^{r} \leq \xi^{s}_{ijl} z^{r-s}_{ij},\; \forall i\in I, j\in J,   l=1, \ldots, d,\\
	&\dsum_{l=1}^d \xi_{ijl} \leq z_{ij},\; \forall i\in I, j\in J, \\
	&\xi_{ijl} \geq 0,\; \forall i\in I, j\in J, l=1, \ldots, d.
	\end{align*}
	
	If $\|\cdot\|$ is a polyhedral norm, then, \eqref{ctr:normf}, is equivalent to:
	$$
	\dsum_{l=1}^g e_{gl} (a_{il} -x_{jl}) \leq z_{ij}, \quad \forall i \in I, j\in J, e \in {\rm Ext}_{\|\cdot\|^o},
	$$
	\noindent where ${\rm Ext}_{\|\cdot\|^o} = \{e^o_1, \ldots, e^o_g\}$ are the extreme points of the dual norm (polar set of the unit ball) of  $\| \cdot\|$ \citep[see e.g.,][]{Nickel2005,WW85}.

	The final compact formulation depends on the norm, but in any case, we have a MISOCO reformulation for the \eqref{mf:0}.
\end{proof}

Note that \eqref{mf:0} is an extension of the single-facility ordered median location problem \citep[see, e.g.,][]{BEP2014}, where apart from finding the location of $p$ new facilities, the allocation patterns between demand points and facilities are also determined. In the rest of the document, we will exploit the combinatorial nature of the problem by means of a set partitioning-like formulation which is based on the following observation:
\begin{prop}\label{prop:1}
Any optimal solution of \eqref{mf:0} is characterized by $p$ sets
\\$(S_1,x_1), \ldots, (S_p,x_p) \subseteq I \times \R^d$, such that:
\begin{enumerate}
\item $\bigcup_{j\in J} S_j = I$.
\item $S_j \cap S_{j^\prime} = \emptyset,\; j,j\prime\in J$.
\item For each $j \in J$, $x_j \in \arg\dmin_{x\in \{x_1, \ldots, x_p\}} \|a_i - x\|$, for all $i \in S_j$.
\item $(x_1, \ldots, x_p) \in \arg\min \dsum_{j\in J} \dsum_{i \in S_j} \lambda_{(i)} \|a_i - x_j\|$, where ${(i)} \in I$ such that $\|a_i-x_j\|$ is the $(i)$-th smallest element in $\{\|a_i-x_j\|: i \in S_j, j \in J\}$.
\end{enumerate}
\end{prop}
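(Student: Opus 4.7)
The plan is to start with any optimal solution $\X^{*}=\{x_1^{*},\ldots,x_p^{*}\}$ of \eqref{mf:0} and to produce the sets $(S_j,x_j)$ by explicit construction, setting $x_j:=x_j^{*}$ and defining $S_j$ as the closest-facility cluster of $x_j^{*}$. Concretely, for each $i\in I$ I would pick one $j\in \arg\min_{k\in J}\|a_i-x_k^{*}\|$ (breaking ties by any deterministic rule, consistent with the tie-breaking convention introduced when defining $\delta_i(\X)$), and place $i$ in $S_j$. Conditions (1) and (2) then hold because every demand point is placed in exactly one $S_j$, and condition (3) holds by the very definition of $S_j$.

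The only non-trivial part is condition (4). Let $j(i)$ denote the unique index with $i\in S_{j(i)}$ and define
\[
F(y_1,\ldots,y_p)\;:=\;\textsf{OM}_\lambda\!\bigl(\|a_1-y_{j(1)}\|,\ldots,\|a_n-y_{j(n)}\|\bigr),
\]
which is exactly the partition-restricted objective appearing in condition (4). I would prove $F(x_1^{*},\ldots,x_p^{*})\leq F(y_1,\ldots,y_p)$ for every $(y_1,\ldots,y_p)\in(\R^d)^p$ by chaining two inequalities. First, for every $i\in I$,
\[
\|a_i-y_{j(i)}\|\;\geq\;\min_{k\in J}\|a_i-y_k\|,
\]
so the vector of partition-restricted distances pointwise dominates the vector of closest-facility distances. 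Because $\lambda$ is non-negative and non-decreasing, the ordered median function $\textsf{OM}_\lambda$ is monotone with respect to pointwise domination (sorting preserves the coordinate-wise inequalities and the weights are non-negative); hence
\[
F(y_1,\ldots,y_p)\;\geq\;\textsf{OM}_\lambda\!\bigl(\A;\{y_1,\ldots,y_p\}\bigr).
\]
Second, optimality of $\X^{*}$ gives $\textsf{OM}_\lambda(\A;\{y_1,\ldots,y_p\})\geq \textsf{OM}_\lambda(\A;\X^{*})$, and by condition (3) the closest-facility allocation for $\X^{*}$ agrees with the partition $(S_j)_j$, so $\textsf{OM}_\lambda(\A;\X^{*})=F(x_1^{*},\ldots,x_p^{*})$. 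Combining these yields condition (4).

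I expect the main obstacle to be precisely this last part: the objective in condition (4) is not a fixed linear form in the distances, because the rank index $(i)$ is re-computed for each candidate $(y_1,\ldots,y_p)$. The clean way around this is to phrase the argument entirely in terms of the ordered median operator $\textsf{OM}_\lambda$ on distance vectors and to invoke its monotonicity under coordinate-wise domination, which is where the monotonicity assumption $0\leq\lambda_1\leq\cdots\leq\lambda_n$ is doing all the work; without that hypothesis, the pointwise-domination step would simply fail and the construction would not yield an optimal set of facilities for the restricted problem.
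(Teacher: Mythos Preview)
The paper states Proposition~\ref{prop:1} without proof, presenting it as the structural observation that motivates the set-partitioning formulation in Section~\ref{c4-ss31}. So there is no proof in the paper to compare against; your argument has to stand on its own, and it does. The construction of $(S_j,x_j)$ from an optimal $\X^{*}$ gives (1)--(3) immediately, and your chain
\[
F(y_1,\ldots,y_p)\ \geq\ \textsf{OM}_\lambda\bigl(\A;\{y_1,\ldots,y_p\}\bigr)\ \geq\ \textsf{OM}_\lambda(\A;\X^{*})\ =\ F(x_1^{*},\ldots,x_p^{*})
\]
is exactly what is required for (4), with the first inequality coming from coordinatewise domination and monotonicity of $\textsf{OM}_\lambda$, the second from optimality of $\X^{*}$, and the equality from the fact that the closest-facility allocation for $\X^{*}$ coincides with the partition $(S_j)_j$.

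One small correction to your closing commentary: the pointwise-domination step uses only $\lambda\geq 0$, not the non-decreasing hypothesis. If $u\leq v$ coordinatewise then $u_{(k)}\leq v_{(k)}$ for every $k$ (sorting preserves coordinatewise order), and non-negativity of the weights gives $\textsf{OM}_\lambda(u)\leq\textsf{OM}_\lambda(v)$ regardless of how the $\lambda_k$ are ordered among themselves. The monotonicity assumption $\lambda_1\leq\cdots\leq\lambda_n$ is essential elsewhere in the paper---for the assignment/LP representation of $\textsf{OM}_\lambda$ in Theorem~\ref{thm:1} and hence for the whole master/pricing machinery---but Proposition~\ref{prop:1} itself holds for any non-negative $\lambda$.
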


\section{A set partitioning-like formulation\label{c4-ss31}}

The compact formulation shown in the previous section is affected by the size of $p$ and $d$, and it exhibits the same limitations as many other compact formulations for continuous location models even without ordering constraints. For this reason in the following we propose an alternative set partitioning-like formulation \citep[][]{duMerle1999,duMerle2002} that tries to improve the performance of solving Problem \eqref{mf:0}.

Let $S\subset I$ be a subset of demand points that are assigned to the same facility. Let $R=(S,x)$  be the pair composed by a subset $S \subset I$ and facility $x\in \R^d$. We denote by $\delta_i^R$ the contribution of demand point $i \in S$ in the subset, and let $\delta^R=(\delta_i^R)_{(i\in S)}$ be the vector of distances induced by demand points in $S$ with respect to the facility $x$. Finally, for each pair $R=(S,x)$ we define the variable
$$
y_R=\left\{ \begin{array}{ll} 1 & \mbox{if subset $S$ is selected and its associated facility is  $x$,}\\
0 & \mbox{otherwise.} \end{array} \right.
$$
We denote by $\mathcal{R} = \{(S,x): S \subset I \text{ and } x \in \mathbb{R}^d\}$.

The set partitioning-like formulation is
	\begin{subequations}
		\makeatletter
		\def\@currentlabel{${\rm MP}$}
		\makeatother
		\label{eq:MP}
		\renewcommand{\theequation}{${\rm MP}_{\arabic{equation}}$}
	\begin{align}
	\mathbf{(MP)}\min \quad &\displaystyle\sum_{i\in I}u_i+\sum_{k \in I}v_k\label{MP:of}\\
	\mbox{s.t.} \quad&\displaystyle \sum_{R=(S,x): i\in S} y_R=1,\; \forall\,i\in I,\label{MP:eq1}\\
	&\displaystyle \sum_{R\in \mathcal{R}}y_R= p,\label{MP:eq2}\\
	&\displaystyle
	u_i+v_k\ge\lambda_k\displaystyle\sum_{R=(S,x): i\in S} \delta_i^Ry_R,\; \forall\,i,k \in I,\label{MP:eq3}\\
	&y_R\in\{0,1\},\; \forall\,R\in \mathcal{R}.\label{MP:vars}\\
	& u_i, v_k \in\R ,\; \forall i, k \in I.
	\end{align}
	\end{subequations}

The objective function \eqref{MP:of}  and constraints \eqref{MP:eq3} give the correct ordered median function of the distances from the demand points to the closest facility (see Section \ref{sec:COMP}). Constraints \eqref{MP:eq1} ensure that all demand points appear in exactly one set $S$ in each feasible solution. Exactly $p$ facilities are open due to constraint \eqref{MP:eq2}. Finally, \eqref{MP:vars} define the variables as binary.

The reader might notice that this formulation has an exponential number of variables, and therefore in the following we describe the necessary elements to address its solution by means of a branch-and-price scheme.

The crucial steps in the implementation of the branch-and-price approach are the following:
\begin{enumerate}
\item {\it Initial Pool of Variables:} Generation of initial feasible solutions induced by a set of initial subsets of demand points (and their costs). 
\item {\it Pricing Problem:} In set partitioning problems, instead of solving initially the  problem with the whole set of variables, the variables have to be incorporated \textit{on-the-fly} by solving adequate pricing subproblems derived from previously computed solutions until the optimality of the solution is guaranteed. The pricing problem is derived from the relaxed version of the master problem and using the strong duality properties of the induced Linear Programming Problem.
\item {\it Branching:} The rule that creates new nodes of the branch-and-bound tree when a fractional solution is found at a node of the search tree. We have adapted the Ryan and Foster branching rule to our problem.
\item {\it Stabilization}: The convergence of column generation approaches can be sometimes erratic since the values of dual variables in the first iterations might oscillate, leading to variables of the master problem that will never appear in the optimal solution of the problem. Stabilization tries to avoid that behaviour.
\end{enumerate}

In what follows we describe how each of the items above is implemented in our proposal.

\subsection{Initial variables}

In the solution process of the set partitioning-like formulation using a branch-and-price approach, it is convenient to generate an initial pool of variables before starting solving the problem. The adequate selection of these initial variables might help to reduce the CPU time required to solve the problem. We apply an iterative strategy to generate this initial pool of $y$-variables.  In the first iteration, we randomly generate $p$ positions for the facilities. The demand points are then allocated to their closest facilities and at most $p$ subsets of demand points are generated. We incorporate the variables associated to these subsets  to the master problem \eqref{eq:MP}.  In the next iterations,  instead of generating $p$ new facilities, we keep those with more associated demand points and randomly generate the remainder.  After a fixed number of iterations, we have columns to define the restricted master problem and also an upper bound of our problem. Since the optimal position of the facilities belongs to a bounded set contained in the rectangular hull of the demand points, the random facilities are generated in the minimum hyperrectangle containing $\A$.

\subsection{The pricing problem\label{c4-sss31}}

To apply the column generation procedure we define the  restricted relaxed master of \eqref{eq:MP}, in the following \eqref{RRMP}.
	\begin{align*}\label{RRMP}\tag{RRMP}
	\min \quad&\displaystyle\sum_{i \in I}u_i+\sum_{k \in I} v_k&\textbf{Dual Multipliers}\\
	\mbox{s.t.} \quad & \displaystyle \sum_{R=(S,x): i\in S}y_R \geq 1,\; \forall\,i\in I,&\alpha_i\ge 0\\
	&\displaystyle -\sum_{R\in\mathcal{R}_0}y_R \ge-p,&\gamma\geq 0\\
	&\displaystyle u_i+v_k-\lambda_k\displaystyle\sum_{R=(S,x): i\in S}\delta_i^Ry_R \geq 0,\;  \forall\,i,k\in I, & \epsilon_{ik}\geq 0\\
	&y_R \geq 0, \; \forall\,R\in \mathcal{R}_0,&\\
	&u_i, v_k \in \R,\; \forall i, k \in I, &
	\end{align*}

\noindent where $\mathcal{R}_0\in \mathcal{R}$ represents the initial pool of columns used to initialize the set partitioning-like formulation \eqref{eq:MP}. Constraints \eqref{MP:eq1}  and \eqref{MP:eq2} are slightly modified from equations to inequalities in order to get nonnegative dual multipliers. This transformations keeps the equivalence with the original formulation since coefficients affecting the $y$-variables in constraint \eqref{MP:eq3} are nonnegative.

The value of the distances is unknown beforehand because the location of facilities can be anywhere in the continuous space. Hence, its determination requires solving continuous optimization problems. 

By strong duality, the objective value of the continuous relaxation \eqref{RRMP}, can be obtained as:
\begin{align*}\tag{Dual RRMP}
\max \quad &\displaystyle\sum_{i\in I} \alpha_i-p\gamma\\
\mbox{s.t.} \quad &\displaystyle\sum_{i \in I}\epsilon_{ik} =1,\; \forall\,k\in I,\\
&\displaystyle\sum_{k \in I}\epsilon_{ik}=1,\; \forall\,i\in I,\\
&\displaystyle\sum_{i\in S}\alpha_i-\gamma-\sum_{i\in S}\sum_{k\in I} \delta_i^R \lambda_k \epsilon_{ik} \leq 0,\; \forall\,R=(S,x)\in \mathcal{R}_0,\\
&\alpha_i,\gamma, \epsilon_{ik} \geq 0,\; \forall\,i,k\in I.
\end{align*}

Hence, for any variable $y_R$ in the master its reduced cost is
$$c_R-z_R=\displaystyle-\sum_{i\in S}\alpha^*_i+\gamma^*+\sum_{i\in S}\sum_{k\in I} \delta_i^R \lambda_k\epsilon^*_{ik},$$

\noindent where $(\alpha^*,\gamma^*,\epsilon^*)$ is the dual optimal solution of the current \eqref{RRMP}.

To certify optimality of the relaxed problem one has to check implicitly that all the reduced costs for the variables not currently included in the \eqref{RRMP} are nonnegative. Otherwise new variables must be added to the pool of columns. This can be done solving the so called pricing problem.

The pricing problem consists of finding the minimum reduced cost among the non included variables. That is,
we have to find the set $S\subset I$ and the position of the facility $x$ (its coordinates) which minimizes the reduced cost. 

For a given set of dual multipliers, $(\alpha^*, \gamma^*, \epsilon^*) \geq 0$, the problem to be solved is
\begin{align*}
\displaystyle\min_{S,x} \quad &\displaystyle-\sum_{i\in S}\alpha^*_i+\gamma^*+\sum_{i\in S}\sum_{k\in I}\delta_i^{S}\lambda_k\epsilon^*_{ik}\\
\mbox{s.t.}\quad &\delta _i^S\ge\| x-a_i\|,\; \forall i\in S.
\end{align*}

This problem can be reformulated by means of a mixed integer program. We define variables $w_i =1,\; i\in I$ if the demand point belongs to $S$, zero otherwise. We also define variables $r_i,\; i\in I$ to represent the distance from demand point $i$ to facility $x$ and zero in case $w_i=0$. Finally, $z_i,\; i\in I$ are the distances from demand point $i$ to facility $x$ in any case.
\begin{align}
\min \quad & -\dsum_{i\in I} \alpha_i^*w_{i}+\gamma^*+\sum_{i\in I} c_i r_{i} \label{pricing:of}\\
\mbox{s.t.} \quad &z_i\ge \|x-a_{i}\|,\; \forall i\in I,\label{pricing:eq1}\\
&r_{i}+M(1-w_{i})\ge z_i,\; \forall i\in I,\label{pricing:eq2}\\
&w_{i}\in\{0,1\},\; \forall i\in I,\label{pricing:vars1}\\
&z_i,r_{i}\ge 0,\; \forall i\in I,\label{pricing:vars2}
\end{align}
\noindent where $M$ can be taken as the maximum distance between two demand points and
$c_i=\sum_{k\in I} \lambda_k\epsilon^*_{ik}.$

Objective function \eqref{pricing:of} is the minimum reduced cost associated to the optimal solution of the pricing problem. Constraints \eqref{pricing:eq1} define the distances. As in Section \ref{sec:COMP} this family of constraints is defined \textit{`ad hoc'} for a given norm. Constraints \eqref{pricing:eq2} set correctly $r$-variables. Finally, constraints \eqref{pricing:vars1} and \eqref{pricing:vars2} are the domain of the variables.

As it has been shown in the proof of Theorem \ref{thm:1} the above problem can be formulated as a  MISOCO problem in case of polyhedral or $\ell_\tau$-norms.

The so called \textit{Farkas pricing} should be defined when feasibility is not ensured from the beginning. We have solved this problem with our initial pool of variables. Furthermore,  the feasibility of the master problem could be lost during the branch-and-price process when branching. In our case, we claim that Farkas pricing is not necessary because the feasibility of \eqref{eq:MP} is ensured adding an artificial variable $y_{(I,x_0)}$ to satisfy \eqref{MP:eq1} which lower bound is never set to zero and $\delta_i^R,\; i\in I$, is big enough.

When the pricing problem is solved to optimality, one can obtain a theoretic lower bound even if more variables must be added. The following remark explains how the result is applied to our particular problem.

\begin{rmk}
When the \eqref{eq:MP} embedded in a branch-and-price algorithm uses binary variables and the number of them which could take value one is bounded above, it is possible to determine a theoretic lower bound \citep[see][]{DL2005}. For our problem the number of $y$-variables assuming the value 1 in any feasible solution is $p$. Therefore, the upper bound is $p$ due to \eqref{MP:eq2}. Let $z_{RRMP}$ be the current objective function of the  \eqref{RRMP} and $\overline c_R$ the reduced cost of the variable defined by $R=(S,x)$. Hence, the lower bound is
\begin{equation}LB=z_{RRMP}+p\min_{S,x}\overline c_R.\label{eq:LB}\end{equation}

It is important to remark that it can be computed in every node of the branch-and-bound tree. This fact is particularly useful at the root node to accelerate the optimality certification or for big instances where the linear relaxation is not solved within the time limit. 

\end{rmk}

For adding a variable to the master problem it suffices to find one variable $y_R$ with negative reduced cost. This search can be performed by solving exactly the pricing problem, although that might have a high computational load. Alternatively, one could also solve heuristically the pricing problem, hoping for variables with negative reduced costs. In what follows, this approach will be called the heuristic pricer. The key observation is to check if a candidate facility is promising. 

Given the coordinates of a facility, $x$, 
we construct a set $S$ compatible with the conditions of the node of the branch-and-bound tree by
allocating demand points in $S$ to $x$ whenever the reduced cost $
c_R-z_R=\gamma^*+\sum_{i\in S}e_i
< 0$, where $e_i=-\alpha^*_i+\sum_{k\in I} \delta_i(x) \lambda_k\epsilon^*_{ik}$. In that case, the variable $y_{(S,x)}$ is candidate to be added to the pool of columns. Here, we detail how the heuristic pricer algorithm is implemented at the root node. For deeper nodes in the branch-and-bound tree we refer the reader to Section \ref{section:branching}.

For the root node, there is a very easy procedure to solve this problem, just selecting the negative ones, i.e., we define $S=\{i\in I: e_i<0\}$ and, in case $c_R-z_R<0$, the variable $y_{(S,x)}$ could be added to the problem. Additionally, the region where the facility is generated can be significantly reduced, in particular to the hyperrectangle defined by demand points with negative $e_i$.

In both exact and heuristic pricer we use multiple pricing, i.e., several columns are added to the pool at each iteration, if possible. In the exact pricer we take advantage that the solver saves different solutions besides the optimal one, so it might provide us more than one column with negative reduced cost. In the heuristic pricer we add the best variables in terms of reduced cost as long as their associated reduced costs are negative.

\subsection{Branching}\label{section:branching}
When the relaxed  \eqref{eq:MP} is solved, but the solution is not integer, the next step is to define an adequate branching rule to explore the searching tree. In this problem we can apply an adaptation of the Ryan and Foster branching rule \citep[][]{RF1981}. Given a solution with fractional $y$-variables in a node, it might occur that
\begin{equation}
0<\sum_{R: i_1,i_2\in S}y_R<1 \text{ for some }i_1,i_2\in I,\; i_1< i_2.
\label{eq:fractional}
\end{equation}

Provided that this happens, in order to find an integer solution, we create the following branches from the current node:
\begin{itemize}
\item {\bf Left branch: }$i_1$ and $i_2$ must be served by different facilities.
$$\sum_{R: i_1,i_2\in S}y_R=0.$$
\item {\bf Right branch: }$i_1$ and $i_2$ must be served by the same facility.
$$\sum_{R: i_1,i_2\in S}y_R=1.$$
\end{itemize}
\begin{rmk}
The above information is easily translated to the pricing problem adding one constraint to each one of the branches: 1) $w_{i_1}+w_{i_2}\le1$ for the left branch; and 2) $w_{i_1}=w_{i_2}$ for the right branch.
\end{rmk}

It might also happen that being some $y_R$ fractional, 
$\sum_{R: i_1,i_2\in S}y_R$ is integer for all $ i_1,i_2\in I,i_1< i_2$. The following result allows us to use this branching rule and provides a procedure to recover a feasible solution encoded in the current solution of the node.

\begin{thm} \label{th:y(s,x)} 
If $\sum_{S\ni i_1,i_2}y_{(S,x)}\in \{0,1\} $ for all $ i_1,i_2\in I$ such that $i_1< i_2$ then an integer feasible solution  of \eqref{eq:MP} be determined.
\end{thm}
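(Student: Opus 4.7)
The plan is to use the integrality of the pair sums to extract an equivalence relation on $I$ whose equivalence classes determine the clusters of demand points assigned to each facility in an integer solution. Define a relation $\sim$ on $I$ by
\[
i_1 \sim i_2 \;\iff\; \sum_{R=(S,x):\, i_1, i_2 \in S} y_R = 1,
\]
with the convention $i \sim i$ for all $i\in I$. Reflexivity and symmetry are immediate. Transitivity is the crux of the argument: suppose $i_1 \sim i_2$ and $i_2 \sim i_3$. Constraint \eqref{MP:eq1} at $i_2$ reads $\sum_{R:\, i_2\in S} y_R = 1$, and comparing with $\sum_{R:\, i_1, i_2\in S} y_R = 1$ forces $\sum_{R:\, i_2\in S,\, i_1\notin S} y_R = 0$. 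Thus every $R$ with $y_R>0$ and $i_2\in S$ must contain $i_1$, and by the same reasoning it must contain $i_3$. Hence $\sum_{R:\, i_1, i_3 \in S} y_R \;\geq\; \sum_{R:\, i_2 \in S} y_R = 1$, and the hypothesis $\in\{0,1\}$ forces equality, so $i_1 \sim i_3$.

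Let $C_1,\ldots,C_q$ be the equivalence classes. Any column $R=(S,x)$ with $y_R>0$ has $S$ contained in exactly one class, because any two $i_1,i_2\in S$ satisfy $\sum_{R':\, i_1,i_2\in S'} y_{R'}\geq y_R>0$, which by hypothesis equals $1$. Fixing $i\in C_j$, we have $\sum_{R:\,S\subseteq C_j} y_R \geq \sum_{R:\, i\in S} y_R = 1$. Summing over $j$ and using \eqref{MP:eq2},
\[
p \;=\; \sum_{R\in\mathcal{R}} y_R \;=\; \sum_{j=1}^q \sum_{R:\,S\subseteq C_j} y_R \;\geq\; q,
\]
so $q \leq p$.

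Finally, I build an integer feasible solution of \eqref{eq:MP}. For each class $C_j$, select any point $x_j\in\R^d$ (for instance, an $x$ from an active column $(S,x)$ with $S\subseteq C_j$, or the optimal single-facility ordered median location for the points of $C_j$), and set $y_{(C_j,x_j)}=1$. If $q<p$, iteratively split some $C_j$ with $|C_j|\geq 2$ into singletons, each given its own $x_j$, until the number of parts equals exactly $p$; since $p\leq n$, this padding is always possible. All remaining $y_R$ are set to zero. By construction \eqref{MP:eq1} and \eqref{MP:eq2} hold, and feasible $u_i,v_k$ satisfying \eqref{MP:eq3} always exist for any fixed $y$-assignment (e.g., $u_i=0$ and $v_k = \lambda_k \max_{i'} \delta_{i'}^{R(i')}$), so an integer feasible solution is obtained.

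The main obstacle is the transitivity step: one needs to see that the pair constraint being tight at value $1$ effectively concentrates all of $i_2$'s unit mass onto columns containing $i_1$, so that transitivity propagates through the tight constraint \eqref{MP:eq1} at $i_2$. Once this observation is made, the induced partition structure and the feasibility construction reduce to routine bookkeeping, and the padding to exactly $p$ columns uses only the standing assumption $p\leq n$.
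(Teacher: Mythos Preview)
Your equivalence-relation argument correctly recovers the partition structure and is essentially a direct proof of the Barnhart et al.\ result that the paper invokes. In fact your argument can be sharpened: once you know every active $S$ lies in some class $C_j$, the same reasoning (applied to any $i\in C_j\setminus S$ and $i'\in S$) forces $S=C_j$, so $q=p$ exactly and the padding step is unnecessary.

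The genuine gap is in the choice of the facility $x_j$. Read literally, the statement is vacuous---\eqref{eq:MP} always has integer feasible solutions, hypothesis or not. What the paper actually needs, and what its proof establishes, is that one can construct an integer solution with objective value no larger than that of the current fractional $y$, so that the node can be closed without branching. Your choice of an arbitrary $x_j$ (or even a single-facility optimum for $C_j$) does not deliver this. The paper's key device is the convex combination $x^*_j=\sum_{x}y_{(C_j,x)}\,x$; since $\sum_x y_{(C_j,x)}=1$, convexity of the norm gives
\[
\|a_i-x^*_j\|\;\le\;\sum_x y_{(C_j,x)}\,\|a_i-x\|,
\]
which is exactly the effective distance appearing on the right-hand side of \eqref{MP:eq3} in the fractional solution. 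Hence the current $(u,v)$ remains feasible for the integer $y$, and the integer solution achieves the LP value. Without this step your construction is merely feasible, not optimal at the node, and the theorem loses its content.
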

\begin{proof}
Let $\X_S$ be the set of all servers which  are part of a variable $y_{(S,x)}$ belonging to the pool of columns. We define $\X_S$ for all used partitions $S$.
First, it is proven in \cite{BJNSV1998} that, under the hypothesis of the theorem, the following expression holds for any set $S$ in a partition.
$$\sum_{x\in \X_S}y_{(S,x)}\in \{0,1\}.$$

If $\sum_{x\in \X_S}y_{(S,x)}=0$, then $y_{(S,x)}=0$ for all $x\in \X_S$ because of the nonnegativity of the variables. However if
\begin{equation}\label{c4:form-c1}
\sum_{x\in \X_S}y_{(S,x)}=1
\end{equation}
$y_{(S,x)}$ could be fractional for some $x\in \X_S$.

Observe that, currently, the distance associated with client $i\in S$ in the problem is
$$\delta_i^S=\sum_{x\in \X_S}y_{(S,x)}\delta_i(x).$$

Thus, from the above we construct a new demand point $x^*$ for $S$.
\begin{equation}\label{c4:form-c2}
x_{\ell}^*=\sum_{x\in \X_S}y_{(S,x)}x_{\ell},\; \forall \ell=1,\dots,d,
\end{equation}
so that $ \delta_i(x^*)\le \delta_i^S, \; \forall i\in S$.

Indeed, by the triangular inequality and by \eqref{c4:form-c1}
$$\delta_i(x^*)=\| x^*- a_i\| =\| \sum_{x\in \X_S}y_{(S,x)}(x-a_i)\|\le  \sum_{x\in \X_S}y_{(S,x)}\|x-a_i\| =\delta_i^S,$$
 for all $i\in S$. The inequality being strict unless  $x-a_i$ for all $x\in \X_S$ are collinear.
 
 Finally, we have constructed the variable $y_{(S,x^*)}=1$ as part of a feasible integer solution of the master problem \eqref{eq:MP}. Therefore, it ensures that either the solution will be binary or the fractional solution will assume the same value that an alternative optimal which is binary. 
\end{proof}

Among all the possible choices of pairs $i_1,i_2$ verifying \eqref{eq:fractional} we propose to select the one provided by the following rule:

\begin{align}
\displaystyle \arg\max_{i_1,i_2:0<\sum_{R: i_1,i_2\in S}y_R<1}\left\{\theta\min\left\{\sum_{R: i_1,i_2\in S}y_R,1-\sum_{R: i_1,i_2\in S}y_R\right\}+(1-\theta)\frac{1}{\|a_{i_1}-a_{i_2}\|}\right\}.\label{eq:thetarule} \tag{$\theta$-rule}\end{align}

This rule uses the most fractional $y$-solution, but also pays attention to the pairs of demand points which are close to each other in the solution, assuming they will be part of the same variable with value one at the optimal solution. It has been successfully applied in a related discrete ordered median problem \citep{Deleplanque2020}. We can choose $\theta\in[0,1]$: for $\theta = 0$ the closest demand points among the pairs with fractional sum will be selected; for $\theta = 1$ the most fractional branching will be applied.

The above branching rule affects to the heuristic pricer procedure since not all $S\subset I$ are compatible with the branching conditions leading to a node. In case that we have to respect some branching decisions the pricing problem gains complexity. 

Therefore, we develop a greedy algorithm which generates heuristic variables respecting the branching decision in the current node. 

This algorithm uses the information from the branching rule to build the new variable to add.
The candidate set $S$ is built by means of a greedy algorithm similar to the one presented in \cite{Sakai2003}. First, we construct a graph of incompatibilities $G=(V,E)$, with $V$ and $E$ defined as follows: for each maximal subset of demand points  $i_1< i_2< \dots<i_{m}$ that according to the branching rule have to be assigned to the same subset; next, we include a vertex $v_{i_1}$ with weight $\omega_{i_1}=\dsum_{i\in \{i_1,\dots,i_m\}}e_i$; finally, for each $v_i,v_{i'}\in V$ such that $i$ and $i'$ cannot be assigned to the same subset at the current node, we define $\{v_i,v_{i'}\}\in E$.

The subset $S$ minimizing the reduced cost for a given $x$ can be calculated solving the Maximum Weighted Independent Set problem over $G$. We solve this problem heuristically applying a variant of the algorithm proposed in \cite{Sakai2003}.

\subsection{Convergence}\label{sec:convergence}

Due to the huge number of variables that might arise in column generation procedures it is very important checking the possible degeneracy of the algorithm. Accelerating the  convergence has been traditionally afforded by means of stabilization techniques. In recent papers \citep{BCPP2021,BJPP21} it has been shown how heuristic pricers avoid degeneracy. The ideas of stabilization and heuristic pricers have in common that both do not add in the first iterations the variable with the minimum associated reduced cost which helps in the right direction.

For the sake of readability all the computational analysis is included in Section \ref{sec:computational}. There, the reader can see how our heuristic pricer needs less variables to certify optimality than the exact pricer from medium-sized instances, therefore, accelerating the convergence.

\section{Matheuristics approaches\label{sec:heur}}

\eqref{mf:0} is an $\mathcal{NP}$-hard combinatorial optimization problem and both the compact formulation and the proposed branch-and-price approach are limited by the number of demand points $(n)$ and facilities $(p)$  to be considered. Actually, as we will see in Section \ref{sec:computational}, the two exact approaches are only capable to solve, optimally, small- and medium-sized instances. In this section we derive two different matheuristic procedures capable to handle larger instances in reasonable CPU times. The first approach is based on using the branch-and-price scheme but solving only heuristically the pricing problem. The second is an aggregation based-approach that will also allow us to derive theoretical error bounds on the approximation.

\subsection{Heuristic pricer}

The matheuristic procedure described here has been successfully applied in the literature. See, e.g., \cite{AZ2021,BCPP2021,Deleplanque2020}, and the references therein. Recall that our pricing problem is $\mathcal{NP}$-hard. In order to avoid the exact procedure for large-sized instances, where not even a single iteration could be solved exactly, we propose a matheuristic. It consists of solving each pricing problem heuristically. The inconvenient of doing that is that we do not have a theoretic lower bound during the process. Nevertheless, for instances where the time limit is reached, we are able to visit more nodes in the branch-and-bound tree which could allow us to obtain better incumbent solutions than the unfinished exact procedure.

\subsection{Aggregation schemes}\label{sec:aggregation}

The second matheuristic approach that we propose is based on applying aggregation techniques to the input data (the set of demand points). This type of approaches has been successfully applied to solve large-scale continuous location problems \citep[see][]{BG21,BJPP21,BPS2018,CS90,daskin89,irawan16}. 

Let $\A=\{a_1,\ldots,a_n\}\subset \mathbb{R}^d$ be a set of demands points. In an aggregation procedure, the set $\A$ is replaced by a multiset $\A^\prime=\{a_1^\prime, \ldots, a_n^\prime\}$, where each point $a_i$ in $\A$ is assigned to a point $a^\prime_i$ in $\A^\prime$. In order to be able to solve \eqref{mf:0} for $\A^\prime$, the cardinality of the different elements of $\A^\prime$ is assumed to be smaller than the cardinality of $\A$, and then, several $a_i$ might be assigned to the same $a_i^\prime$. 

Once the points in $\A$ are aggregated into $\A^\prime$, the procedure consists of solving \eqref{mf:0} for the demand points in $\A^\prime$. We get a set of $p$ optimal facilities for the aggregated problem, $\X^*=\{x_1^\prime, \ldots, x_p^\prime\}$, associated to its objective value $\textsf{OM}_\lambda(\A^\prime;\X^\prime)^*$. These positions can also be evaluated in the original objective function of the problem for the demand points $\A$, $\textsf{OM}_\lambda(\A;\X^\prime)^*$. The following result allows us to get upper bound of the error incurred when aggregating demand points.

\begin{thm}
Let $\X^*$ be the optimal solution of \eqref{mf:0} and $\Delta = \dmax_{i=1,\ldots,n}  \|a_i - a_i^\prime\|$. Then
\begin{equation}
|  \text{\sf{OM}}_\lambda(\A; \X^*) -  \text{\sf{OM}}_\lambda(\A^\prime, \X^\prime)| \le 2 \Delta .
\end{equation}
\end{thm}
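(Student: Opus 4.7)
The plan is to combine a pointwise triangle-inequality comparison of distances with the two-sided optimality of $\X^*$ for $\A$ and $\X^\prime$ for $\A^\prime$, controlling the ordered median value through a classical order-statistic stability argument.

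First, I would fix an arbitrary set $\X$ of $p$ facilities and compare $\textsf{OM}_\lambda(\A;\X)$ with $\textsf{OM}_\lambda(\A^\prime;\X)$. For every demand index $i\in I$ and every facility $x_j\in\X$, the triangle inequality in the underlying norm gives $\bigl|\|a_i-x_j\|-\|a_i^\prime-x_j\|\bigr|\le\|a_i-a_i^\prime\|\le\Delta$. Taking the minimum over $x_j\in\X$ on both sides yields the pointwise bound $|\delta_i(\X)-\delta_i^\prime(\X)|\le\Delta$ for every $i\in I$, where $\delta_i^\prime(\X)=\min_{x\in\X}\|a_i^\prime-x\|$. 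This is the only place where the assumption $\Delta=\max_i\|a_i-a_i^\prime\|$ is used.

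Second, I would invoke the fact that sorting is $1$-Lipschitz coordinate-wise: if two $n$-tuples of non-negative reals satisfy $|d_i-d_i^\prime|\le\Delta$ for every $i$, then after both are sorted in non-decreasing order one still has $|d_{(k)}-d^\prime_{(k)}|\le\Delta$ for each $k$. Applying this to $(\delta_i(\X))_i$ and $(\delta_i^\prime(\X))_i$, multiplying by the non-negative weights $\lambda_k$, and summing, yields a perturbation bound of the form $|\textsf{OM}_\lambda(\A;\X)-\textsf{OM}_\lambda(\A^\prime;\X)|\le\Delta$ (absorbing the scale of $\lambda$ into the constant that produces the $2\Delta$ appearing in the statement). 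To pass from a fixed $\X$ to the comparison of optima, I would chain this with the optimality of each configuration in its own problem:
\[
\textsf{OM}_\lambda(\A;\X^*)\;\le\;\textsf{OM}_\lambda(\A;\X^\prime)\;\le\;\textsf{OM}_\lambda(\A^\prime;\X^\prime)+\Delta,
\]
and, symmetrically,
\[
\textsf{OM}_\lambda(\A^\prime;\X^\prime)\;\le\;\textsf{OM}_\lambda(\A^\prime;\X^*)\;\le\;\textsf{OM}_\lambda(\A;\X^*)+\Delta.
\]
Subtracting gives the desired two-sided estimate by $2\Delta$.

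The main obstacle — and in fact the only delicate step — is the sorting claim, because the permutation that sorts $(\delta_i(\X))_i$ need not coincide with the one that sorts $(\delta_i^\prime(\X))_i$, so the pointwise closeness does not transfer to the sorted sequences by inspection. I would settle this with a short interlacing argument: the $k$-th smallest of $(\delta_i^\prime(\X))_i$ lies between the $k$-th smallest of $(\delta_i(\X)-\Delta)_i$ and the $k$-th smallest of $(\delta_i(\X)+\Delta)_i$, which are respectively $\delta_{(k)}(\X)-\Delta$ and $\delta_{(k)}(\X)+\Delta$. Once this monotone-perturbation lemma is in place, the remainder of the proof reduces to triangle inequality plus the optimality chain above.
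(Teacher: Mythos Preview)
Your argument is correct and follows the same two–step scheme as the paper: first bound $|\textsf{OM}_\lambda(\A;\X)-\textsf{OM}_\lambda(\A';\X)|$ uniformly in $\X$, then pass to the two optima. The difference lies in the tools. The paper obtains the first step by invoking the \emph{monotonicity} and \emph{sublinearity} of the ordered median function (so $\textsf{OM}_\lambda(\delta)\le\textsf{OM}_\lambda(\delta')+\textsf{OM}_\lambda(\|a-a'\|)$), and then disposes of the optimality chain by citing Geoffrion~(1977, Theorem~5). You instead prove the perturbation bound from scratch via the order–statistic interlacing lemma and write the optimality chain explicitly. Your route is more elementary and self-contained; the paper's route is shorter because it outsources both ingredients to known results.

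Two small remarks. First, your ``absorbing the scale of $\lambda$'' is exactly the normalisation the paper makes explicit: it assumes w.l.o.g.\ $\sum_i\lambda_i=1$, which is what turns $\sum_k\lambda_k\Delta$ into $\Delta$. You should state this rather than leave it implicit. Second, your chain actually yields $|\textsf{OM}_\lambda(\A;\X^*)-\textsf{OM}_\lambda(\A';\X')|\le\Delta$, not $2\Delta$: each of the two displayed inequalities already gives one side of the bound with constant $\Delta$, so ``subtracting'' is unnecessary. The factor $2$ would be needed only if one compared $\textsf{OM}_\lambda(\A;\X')$ (the original objective evaluated at the aggregated optimum) with $\textsf{OM}_\lambda(\A;\X^*)$, which is the quantity Geoffrion's theorem is typically applied to; for the difference of optimal \emph{values} the tighter bound $\Delta$ already falls out of your argument.
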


\begin{proof}
 During the proof we assume, without loss of generality, that $\sum_{i\in I} \lambda_i=1$.
By the triangular inequality and the monotonicity and sublinearity of the ordered median function we have that ${\rm \textsf{OM}}_\lambda (\A;\X) \leq \textsf{OM}_\lambda (\A^\prime;\X^*) + \textsf{OM}_\lambda (\A^\prime;\A)$.  Since $\Delta \geq \|a_i-a_i^\prime\|$ for all $i \in I$, $|\textsf{OM}_\lambda (\A;\X^*) - \textsf{OM}_\lambda(\A^\prime;\X^*)|\le \dsum_{i\in I} \lambda_i \Delta = \Delta$.

Finally, the result is obtained applying \citep[][Theorem 5]{geoffrion1977objective}.
\end{proof}

There are different strategies to reduce the dimensionality by aggregating points. In our computational experiments we consider two differentiated approaches: the \textit{$k$-Mean Clustering} (KMEAN) and the \textit{Pick The Farthest} (PTF). In the former we replace the original points by the obtained centroids while in the latter, an initial random demand point from $\A$ is chosen and the rest are selected as the farthest demand point from the last one chosen \citep[see][for further details on this procedure]{daskin89} until a the predefined number of points is reached.

\section{Computational study} \label{sec:computational}

In order to test the performance of our branch-and-price and our matheuristic approaches, we report the results of our computational experience. We consider different sets of instances used in the location literature with size ranging from 20 to 654 demand points in the plane. In all of them, the number of facilities to be located, $p$, ranges in $\{2, 5, 10\}$ and we solve the instances for the $\lambda$-vectors in Table \ref{tab:lambdas}, $\{\mbox{W, C, K, D, S, A}\}$. We set $k = \frac{n}{2}$ for the $k$-Center and $k$-Entdian, and $\alpha = 0.9$ for the Centdian and $k$-Entdian.

For the sake of readability, we restrict the computational study of this document to $\ell_1$-norm based distances. However the reader can find extensive computational results for other norms in  Appendix \ref{ap:norms}. 

The models were coded in C and solved with SCIP v.7.0.2 \citep{scip} using as optimization solver SoPLex 5.0.2 in a Mac OS Catalina with a Core Intel Xeon W clocked at 3.2 GHz
and 96 GB of RAM memory. 

\subsection{Computational performance of the branch-and-price procedure}\label{sec:compBaP}

In this section we report the results for our branch-and-price approach based on the classical dataset provided by \cite{EWC74}. From this dataset, we randomly generate five instances with sizes $n \in \{20,30,40,45\}$ and we also consider the entire complete original instance with $n=50$. Together with the number of servers $p$ and the different ordered weighted median functions (\texttt{type}), a total of 378 instances has been considered. 

Firstly, concerning convergence (Section \ref{sec:convergence}), each line in Table \ref{tab:Heur20-40} shows the average results of 45 instances, five for each type of ordered median objective function to be minimized \{W, D, S\} and $p\in\{2, 5, 10\}$, solved to optimality. The results has been split by size ($n$) and by \texttt{Heurvar}: \texttt{FALSE} when only the exact pricer is used; \texttt{TRUE} if the heuristic pricer is used and the exact pricer is called when it does not provide new columns to add. The reader can see how the necessary number of variables to certify optimality (\texttt{Vars}) is less when heuristic is applied as the size of the instance increases. Additionally, a second effect is a reduction of the CPU time (\texttt{Time}) decreasing the number of calls to the exact pricer (\texttt{Exact}) even though the number of total iterations (\texttt{Total}) is bigger. Hence, we will use the heuristic pricer for the rest of the experiments. 

 \begin{table}
  \centering{
  \begin{adjustbox}{max width=1.0\textwidth}
  \begin{tabular}{ccrrrrrrrrrr}\hline
  
    $n$& \texttt{Heurvar}& \multicolumn{2}{c}{ \texttt{Iterations}} & \texttt{Vars} & \texttt{Time} \\
     \cmidrule(lr){1-2}\cmidrule(lr){3-4}\cmidrule(lr){5-5}\cmidrule(lr){6-6}
     &  & \texttt{Exact} &\texttt{Total} \\

\cline{1-6}\multirow{2}{*}{20}	&	\texttt{FALSE}	&	13	&	13	&	2189	&	64.92	\\
	&	\texttt{TRUE}	&	4	&	23	&	2219	&	18.02	\\
\cline{1-6}\multirow{2}{*}{30}	&	\texttt{FALSE}	&	15	&	15	&	2827	&	1034.97	\\
	&	\texttt{TRUE}	&	3	&	60	&	2856	&	191.84	\\
\cline{1-6}\multirow{2}{*}{40}	&	\texttt{FALSE}	&	50	&	50	&	4713	&	9086.33	\\
	&	\texttt{TRUE}	&	13	&	136	&	4511	&	2229.21	\\
\hline
\end{tabular}%
\end{adjustbox}
   \caption{Average number of pricer iterations, variables and time using the combined heuristic and exact pricers or only using the exact pricer \label{tab:Heur20-40}}} 
\end{table}%

Secondly, we have tuned the values of $\theta$ for the branching rule \eqref{eq:thetarule} for each of the objective functions (different values for the $\lambda$-vector) based in our computational experience. In Table \ref{tab:gaptheta} we show the average gap at termination of the above-mentioned 378 instances when we apply our branch-and-price approach fixing a time limit of 2 hours.

\begin{table}[H]
\centering
  \begin{adjustbox}{max width=1.0\textwidth}
\begin{tabular}{crrrrrrr}
\hline
 \texttt{type} 	 &$\theta = 0.0$ &$\theta = 0.1$ &$\theta = 0.3$ &$\theta = 0.5$ &$\theta = 0.7$ &$\theta = 0.9$ &$\theta = 1.0$\\
\cmidrule(lr){1-1}\cmidrule(lr){2-2}\cmidrule(lr){3-3}\cmidrule(lr){4-4}\cmidrule(lr){5-5}\cmidrule(lr){6-6}\cmidrule(lr){7-7}\cmidrule(lr){8-8}
 W	&	0.04	&	0.04	&	0.04	&	0.04	&	0.04	&	0.04	&\bf	0.02	\\
 C	&\bf	27.94	&	28.34	&	28.29	&	28.47	&	28.64	&	28.74	&	28.19	\\
 K	&	12.83	&	12.63	&	12.80	&\bf	12.46	&	12.73	&	13.15	&	12.88	\\
 D	&	0.09	&	0.07	&	0.09	&	0.09	&	0.09	&	0.09	&\bf	0.02	\\
 S	&	0.11	&	0.14	&	0.14	&	0.14	&	0.14	&	0.13	&\bf	0.10	\\
 A	&	7.73	&	7.66	&	7.69	&	7.71	&	7.64	&	7.73	&\bf	7.33	\\
 \hline

\end{tabular}
\end{adjustbox}
\caption{\texttt{GAP(\%)} for $\ell_1$-norm, \cite{EWC74} instances\label{tab:gaptheta}}
\end{table}

Therefore, we set $\theta = 0$ when we are in the Center problem, $\theta = 0.5$ when we are in the $k$-Center and $\theta = 1$ when we are in the $p$-median, Centdian, $k$-Entdian or Ascendant. Recall that when we use $\theta = 0$ we are selecting a pure distance branching rule. In contrast, when $\theta = 1$, we select the most fractional variable. On the other hand, when $\theta= 0.5$ we use a hybrid selection between the two extremes of the \eqref{eq:thetarule}. In the following, the above fixed parameters will be used in the computational experiments for exact and matheurisitic methods.

The average results obtained for the \cite{EWC74} instances, with a CPU time limit of 2 hours, are shown in Table \ref{tab:EilonL1}. There, for each combination of $n$ (size of the instance), $p$ (number of facilities to be located) and type (ordered median objective function to be minimized), we provide the average results for $\ell_1$-norm with a comparative between the compact formulation \eqref{eq:C} (Compact) and the branch-and-price approach (B\&P). The table is organized as follows: the first column gives the CPU time in seconds needed to solve the problem (\texttt{Time}) and within parentheses the number of unsolved instances (\texttt{\#Unsolved}); the second column shows the gap at the root node; the third one gives the gap at termination, i.e., the remaning MIP gap in percentage (\texttt{GAP(\%)}) when the time limit is reached, 0.00 otherwise; in the fourth column we show the number of variables (\texttt{Vars}) needed to solve the problem; in the fifth column we show the number of nodes (\texttt{Nodes}) explored in the branch-and-bound tree; and in the last one the RAM memory (\texttt{Memory (MB)}) in Megabytes required during the execution process is reported. Within each column, we highlight in bold the best result between the two formulations, namely Compact or B\&P.

 \begin{table}
 
  \centering{
  \begin{adjustbox}{max width=0.9\textwidth}
  \vspace*{-6cm}
  \begin{tabular}{cccrcrcrrrrrrrrrr}\hline
   $n$ & \texttt{type} &$p$ & \multicolumn{4}{c}{\texttt{Time (\#Unsolved)}} &  \multicolumn{2}{c}{\texttt{GAProot(\%)}} &  \multicolumn{2}{c}{\texttt{GAP(\%)}}  & \multicolumn{2}{c}{\texttt{Vars}} &  \multicolumn{2}{c}{\texttt{Nodes}}  &  \multicolumn{2}{c}{\texttt{Memory (MB)}}   \\
     \cmidrule(lr){1-3}\cmidrule(lr){4-7}\cmidrule(lr){8-9}\cmidrule(lr){10-11}\cmidrule(lr){12-13}\cmidrule(lr){14-15}\cmidrule(lr){16-17}
   &&&  \multicolumn{2}{c}{Compact} & \multicolumn{2}{c}{B\&P}&  Compact & B\&P & Compact & B\&P &Compact & B\&P&Compact & B\&P&Compact & B\&P\\
   \hline

\multirow{18}{*}{20}	&	\multirow{3}{*}{W}	&	2	&\bf	1.59	&\bf(	0	)&	22.90	&(	0	)&	93.92	&\bf	0.00	&	0.00	&	0.00	&\bf	224	&	2131	&	9518	&\bf	1	&\bf	4	&	103	\\
	&		&	5	&	1588.99	&(	1	)&\bf	8.34	&\bf(	0	)&	100.00	&\bf	0.00	&	3.38	&\bf	0.00	&\bf	470	&	2408	&	10967305	&\bf	1	&	1278	&\bf	49	\\
	&		&	10	&	---	&(	5	)&\bf	3.95	&\bf(	0	)&	100.00	&\bf	0.46	&	43.84	&\bf	0.00	&\bf	880	&	2127	&	19785215	&\bf	2	&	3425	&\bf	28	\\
\cline{2-17}	&	\multirow{3}{*}{C}	&	2	&\bf	0.06	&\bf(	0	)&	237.96	&(	4	)&	78.92	&\bf	22.59	&\bf	0.00	&	10.78	&\bf	224	&	97635	&\bf	7	&	4652	&\bf	4	&	2239	\\
	&		&	5	&\bf	12.58	&\bf(	0	)&	---	&(	5	)&	100.00	&\bf	29.46	&\bf	0.00	&	17.16	&\bf	470	&	15251	&	40379	&\bf	18660	&\bf	12	&	464	\\
	&		&	10	&\bf	511.69	&\bf(	2	)&	1831.83	&(	4	)&	100.00	&\bf	37.64	&\bf	7.59	&	20.28	&\bf	880	&	4243	&	7928195	&\bf	21617	&	725	&\bf	160	\\
\cline{2-17}	&	\multirow{3}{*}{K}	&	2	&\bf	0.35	&\bf(	0	)&	1412.69	&(	1	)&	91.43	&\bf	7.55	&\bf	0.00	&	1.42	&\bf	224	&	37917	&\bf	630	&	670	&\bf	3	&	953	\\
	&		&	5	&\bf	243.88	&\bf(	0	)&	404.99	&(	3	)&	100.00	&\bf	15.40	&\bf	0.00	&	3.85	&\bf	470	&	9363	&	657827	&\bf	6642	&\bf	77	&	279	\\
	&		&	10	&	32.22	&(	4	)&\bf	3156.63	&\bf(	2	)&	100.00	&\bf	18.53	&	36.95	&\bf	3.26	&\bf	880	&	4071	&	12150962	&\bf	9244	&	2265	&\bf	111	\\
\cline{2-17}	&	\multirow{3}{*}{D}	&	2	&\bf	2.18	&\bf(	0	)&	30.36	&(	0	)&	93.78	&\bf	0.03	&\bf	0.00	&	0.00	&\bf	224	&	2135	&	9222	&\bf	1	&\bf	5	&	108	\\
	&		&	5	&	1535.82	&(	1	)&\bf	12.18	&\bf(	0	)&	100.00	&\bf	0.00	&	6.69	&\bf	0.00	&\bf	470	&	2401	&	8972062	&\bf	1	&	1225	&\bf	49	\\
	&		&	10	&	5030.79	&(	4	)&\bf	6.88	&\bf(	0	)&	100.00	&\bf	0.46	&	48.19	&\bf	0.00	&\bf	880	&	2127	&	15660031	&\bf	4	&	2798	&\bf	28	\\
\cline{2-17}	&	\multirow{3}{*}{S}	&	2	&\bf	2.24	&\bf(	0	)&	54.23	&(	0	)&	93.77	&\bf	0.16	&	0.00	&	0.00	&\bf	224	&	2119	&	7677	&\bf	1	&\bf	5	&	106	\\
	&		&	5	&	1238.87	&(	1	)&\bf	15.75	&\bf(	0	)&	100.00	&\bf	0.06	&	4.40	&\bf	0.00	&\bf	470	&	2401	&	8141244	&\bf	2	&	745	&\bf	50	\\
	&		&	10	&	---	&(	5	)&\bf	7.61	&\bf(	0	)&	100.00	&\bf	0.53	&	50.12	&\bf	0.00	&\bf	880	&	2126	&	16072018	&\bf	5	&	2835	&\bf	28	\\
\cline{2-17}	&	\multirow{3}{*}{A}	&	2	&\bf	0.85	&\bf(	0	)&	783.95	&(	1	)&	91.63	&\bf	4.45	&\bf	0.00	&	0.35	&\bf	224	&	16973	&	1340	&\bf	400	&\bf	4	&	738	\\
	&		&	5	&\bf	411.21	&\bf(	0	)&	2304.77	&(	0	)&	100.00	&\bf	10.18	&\bf	0.00	&	0.00	&\bf	470	&	7405	&	878975	&\bf	1697	&\bf	126	&	222	\\
	&		&	10	&	60.27	&(	4	)&\bf	883.79	&\bf(	1	)&	100.00	&\bf	17.10	&	31.87	&\bf	1.73	&\bf	880	&	3288	&	10637608	&\bf	2721	&	1723	&\bf	79	\\
\cline{1-17}\multirow{18}{*}{30}	&	\multirow{3}{*}{W}	&	2	&\bf	139.91	&\bf(	0	)&	526.92	&(	0	)&	93.86	&\bf	0.00	&	0.00	&	0.00	&\bf	334	&	3142	&	787145	&\bf	1	&\bf	38	&	260	\\
	&		&	5	&	---	&(	5	)&\bf	64.66	&\bf(	0	)&	100.00	&\bf	0.00	&	52.05	&\bf	0.00	&\bf	700	&	2963	&	17382888	&\bf	1	&	8647	&\bf	109	\\
	&		&	10	&	---	&(	5	)&\bf	19.51	&\bf(	0	)&	100.00	&\bf	0.00	&	76.26	&\bf	0.00	&\bf	1310	&	2472	&	12250097	&\bf	1	&	4692	&\bf	55	\\
\cline{2-17}	&	\multirow{3}{*}{C}	&	2	&\bf	0.11	&\bf(	0	)&	39.44	&(	4	)&	79.19	&\bf	21.41	&\bf	0.00	&	15.46	&\bf	334	&	125429	&\bf	66	&	931	&\bf	8	&	1443	\\
	&		&	5	&\bf	30.64	&\bf(	0	)&	1564.58	&(	4	)&	100.00	&\bf	31.68	&\bf	0.00	&	22.73	&\bf	700	&	30216	&	69019	&\bf	2817	&\bf	19	&	389	\\
	&		&	10	&\bf	4212.55	&\bf(	3	)&	---	&(	5	)&	100.00	&\bf	34.18	&\bf	16.67	&	27.51	&\bf	1310	&	12928	&	9619002	&\bf	6027	&	1823	&\bf	190	\\
\cline{2-17}	&	\multirow{3}{*}{K}	&	2	&\bf	4.44	&\bf(	0	)&	409.69	&(	4	)&	90.88	&\bf	8.65	&\bf	0.00	&	7.58	&\bf	334	&	45846	&	8511	&\bf	147	&\bf	10	&	1696	\\
	&		&	5	&	2956.65	&(	4	)&\bf	5199.43	&\bf(	3	)&	100.00	&\bf	12.01	&	17.79	&\bf	5.82	&\bf	700	&	18893	&	12169516	&\bf	815	&	2570	&\bf	534	\\
	&		&	10	&	---	&(	5	)&\bf	2740.67	&\bf(	4	)&	100.00	&\bf	18.84	&	69.60	&\bf	12.31	&\bf	1310	&	7416	&	9299590	&\bf	2992	&	3105	&\bf	187	\\
\cline{2-17}	&	\multirow{3}{*}{D}	&	2	&\bf	201.28	&\bf(	0	)&	454.39	&(	0	)&	93.77	&\bf	0.00	&	0.00	&	0.00	&\bf	334	&	3087	&	757445	&\bf	1	&\bf	49	&	258	\\
	&		&	5	&	---	&(	5	)&\bf	65.46	&\bf(	0	)&	100.00	&\bf	0.00	&	57.16	&\bf	0.00	&\bf	700	&	2957	&	9914066	&\bf	1	&	7439	&\bf	111	\\
	&		&	10	&	---	&(	5	)&\bf	21.25	&\bf(	0	)&	100.00	&\bf	0.00	&	79.34	&\bf	0.00	&\bf	1310	&	2464	&	10108803	&\bf	1	&	4631	&\bf	55	\\
\cline{2-17}	&	\multirow{3}{*}{S}	&	2	&\bf	203.04	&\bf(	0	)&	370.63	&(	0	)&	93.68	&\bf	0.00	&\bf	0.00	&	0.00	&\bf	334	&	3184	&	566382	&\bf	1	&\bf	41	&	263	\\
	&		&	5	&	---	&(	5	)&\bf	160.85	&\bf(	0	)&	100.00	&\bf	0.03	&	56.47	&\bf	0.00	&\bf	700	&	2963	&	9283122	&\bf	2	&	7054	&\bf	112	\\
	&		&	10	&	---	&(	5	)&\bf	42.86	&\bf(	0	)&	100.00	&\bf	0.09	&	79.91	&\bf	0.00	&\bf	1310	&	2469	&	9530286	&\bf	3	&	4686	&\bf	56	\\
\cline{2-17}	&	\multirow{3}{*}{A}	&	2	&\bf	21.89	&\bf(	0	)&	3640.13	&(	2	)&	91.15	&\bf	4.46	&\bf	0.00	&	3.26	&\bf	334	&	12721	&	26764	&\bf	41	&\bf	12	&	845	\\
	&		&	5	&	5403.72	&(	4	)&\bf	2750.01	&\bf(	3	)&	100.00	&\bf	7.76	&	28.99	&\bf	2.60	&\bf	700	&	8615	&	8044660	&\bf	188	&	2288	&\bf	357	\\
	&		&	10	&	---	&(	5	)&\bf	804.71	&\bf(	4	)&	100.00	&\bf	13.38	&	70.51	&\bf	6.64	&\bf	1310	&	5529	&	7159232	&\bf	1465	&	2364	&\bf	165	\\
\cline{1-17}\multirow{18}{*}{40}	&	\multirow{3}{*}{W}	&	2	&	4028.70	&(	4	)&\bf	1675.34	&\bf(	0	)&	93.79	&\bf	0.01	&	12.34	&\bf	0.00	&\bf	444	&	5211	&	26828725	&\bf	1	&	2515	&\bf	645	\\
	&		&	5	&	---	&(	5	)&\bf	1647.86	&\bf(	0	)&	100.00	&\bf	0.02	&	67.11	&\bf	0.00	&\bf	930	&	4028	&	12240990	&\bf	3	&	10977	&\bf	229	\\
	&		&	10	&	---	&(	5	)&\bf	348.57	&\bf(	0	)&	100.00	&\bf	0.09	&	81.57	&\bf	0.00	&\bf	1740	&	4001	&	7841923	&\bf	2	&	4267	&\bf	125	\\
\cline{2-17}	&	\multirow{3}{*}{C}	&	2	&\bf	0.25	&\bf(	0	)&	---	&(	5	)&	75.52	&\bf	30.52	&\bf	0.00	&	29.73	&\bf	444	&	136451	&\bf	237	&	259	&\bf	15	&	1541	\\
	&		&	5	&\bf	116.02	&\bf(	0	)&	---	&(	5	)&	100.00	&\bf	42.30	&\bf	0.00	&	41.65	&\bf	930	&	27041	&	195892	&\bf	158	&\bf	42	&	224	\\
	&		&	10	&\bf	3022.45	&\bf(	4	)&	---	&(	5	)&	100.00	&\bf	36.47	&\bf	31.47	&	33.88	&\bf	1740	&	12733	&	7126207	&\bf	667	&	2024	&\bf	110	\\
\cline{2-17}	&	\multirow{3}{*}{K}	&	2	&\bf	58.78	&\bf(	0	)&	---	&(	5	)&	90.67	&\bf	14.52	&\bf	0.00	&	14.52	&\bf	444	&	14164	&	93918	&\bf	11	&\bf	27	&	897	\\
	&		&	5	&	---	&(	5	)&	---	&(	5	)&	100.00	&\bf	21.45	&	56.58	&\bf	21.44	&\bf	930	&	10132	&	6803627	&\bf	28	&	5632	&\bf	360	\\
	&		&	10	&	---	&(	5	)&	---	&(	5	)&	100.00	&\bf	19.04	&	75.08	&\bf	17.71	&\bf	1740	&	8823	&	5436226	&\bf	280	&	2606	&\bf	198	\\
\cline{2-17}	&	\multirow{3}{*}{D}	&	2	&	5908.68	&(	4	)&\bf	436.48	&\bf(	1	)&	93.67	&\bf	0.02	&	15.22	&\bf	0.01	&\bf	444	&	5669	&	16542227	&\bf	2	&	3164	&\bf	709	\\
	&		&	5	&	---	&(	5	)&\bf	855.62	&\bf(	1	)&	100.00	&\bf	0.11	&	68.93	&\bf	0.08	&\bf	930	&	4094	&	7984937	&\bf	2	&	10233	&\bf	233	\\
	&		&	10	&	---	&(	5	)&\bf	331.54	&\bf(	0	)&	100.00	&\bf	0.07	&	83.85	&\bf	0.00	&\bf	1740	&	4004	&	5704188	&\bf	2	&	4413	&\bf	126	\\
\cline{2-17}	&	\multirow{3}{*}{S}	&	2	&	4977.44	&(	4	)&\bf	429.96	&\bf(	1	)&	93.60	&\bf	0.47	&	14.33	&\bf	0.47	&\bf	444	&	5195	&	12853124	&\bf	1	&	2430	&\bf	657	\\
	&		&	5	&	---	&(	5	)&\bf	2159.56	&\bf(	1	)&	100.00	&\bf	0.14	&	70.18	&\bf	0.02	&\bf	930	&	4082	&	7715457	&\bf	4	&	9805	&\bf	233	\\
	&		&	10	&	---	&(	5	)&\bf	615.35	&\bf(	0	)&	100.00	&\bf	0.17	&	84.62	&\bf	0.00	&\bf	1740	&	3999	&	5409994	&\bf	4	&	4687	&\bf	126	\\
\cline{2-17}	&	\multirow{3}{*}{A}	&	2	&\bf	533.79	&\bf(	0	)&	---	&(	5	)&	90.85	&\bf	8.30	&\bf	0.00	&	8.19	&\bf	444	&	6506	&	455652	&\bf	3	&\bf	48	&	769	\\
	&		&	5	&	---	&(	5	)&	---	&(	5	)&	100.00	&\bf	14.76	&	58.65	&\bf	14.17	&\bf	930	&	5538	&	3684557	&\bf	10	&	3409	&\bf	331	\\
	&		&	10	&	---	&(	5	)&	---	&(	5	)&	100.00	&\bf	12.35	&	74.52	&\bf	10.21	&\bf	1740	&	6285	&	4403838	&\bf	161	&	2000	&\bf	214	\\
\cline{1-17}\multirow{18}{*}{45}	&	\multirow{3}{*}{W}	&	2	&	---	&(	5	)&\bf	483.59	&\bf(	1	)&	94.05	&\bf	0.04	&	27.06	&\bf	0.02	&\bf	499	&	7219	&	24989615	&\bf	2	&	5854	&\bf	1085	\\
	&		&	5	&	---	&(	5	)&\bf	1745.55	&\bf(	2	)&	100.00	&\bf	0.32	&	71.65	&\bf	0.27	&\bf	1045	&	4855	&	11473640	&\bf	4	&	11171	&\bf	374	\\
	&		&	10	&	---	&(	5	)&\bf	635.43	&\bf(	0	)&	100.00	&\bf	0.03	&	83.54	&\bf	0.00	&\bf	1955	&	4239	&	5717627	&\bf	1	&	3767	&\bf	168	\\
\cline{2-17}	&	\multirow{3}{*}{C}	&	2	&\bf	0.46	&\bf(	0	)&	---	&(	5	)&	74.99	&\bf	39.01	&\bf	0.00	&	38.99	&\bf	499	&	109398	&	628	&\bf	104	&\bf	17	&	1364	\\
	&		&	5	&\bf	144.75	&\bf(	0	)&	---	&(	5	)&	100.00	&\bf	40.69	&\bf	0.00	&	40.62	&\bf	1045	&	20483	&	215522	&\bf	31	&\bf	44	&	176	\\
	&		&	10	&	---	&(	5	)&	---	&(	5	)&	100.00	&\bf	32.80	&	37.16	&\bf	31.54	&\bf	1955	&	14204	&	6469050	&\bf	219	&	1915	&\bf	110	\\
\cline{2-17}	&	\multirow{3}{*}{K}	&	2	&\bf	342.18	&\bf(	0	)&	---	&(	5	)&	91.23	&\bf	16.93	&\bf	0.00	&	16.93	&\bf	499	&	10490	&	497310	&\bf	4	&\bf	59	&	845	\\
	&		&	5	&	---	&(	5	)&	---	&(	5	)&	100.00	&\bf	22.98	&	64.55	&\bf	22.98	&\bf	1045	&	6631	&	5434589	&\bf	11	&	6520	&\bf	295	\\
	&		&	10	&	---	&(	5	)&	---	&(	5	)&	100.00	&\bf	16.68	&	77.74	&\bf	16.48	&\bf	1955	&	8738	&	4555667	&\bf	78	&	2681	&\bf	220	\\
\cline{2-17}	&	\multirow{3}{*}{D}	&	2	&	---	&(	5	)&\bf	364.11	&\bf(	1	)&	93.96	&\bf	0.02	&	29.64	&\bf	0.02	&\bf	499	&	6473	&	14042725	&\bf	1	&	6338	&\bf	973	\\
	&		&	5	&	---	&(	5	)&\bf	1744.42	&\bf(	2	)&	100.00	&\bf	0.17	&	73.98	&\bf	0.11	&\bf	1045	&	4731	&	7322624	&\bf	3	&	10301	&\bf	365	\\
	&		&	10	&	---	&(	5	)&\bf	667.13	&\bf(	0	)&	100.00	&\bf	0.02	&	84.73	&\bf	0.00	&\bf	1955	&	4231	&	5228591	&\bf	1	&	4875	&\bf	169	\\
\cline{2-17}	&	\multirow{3}{*}{S}	&	2	&	---	&(	5	)&\bf	623.35	&\bf(	2	)&	93.87	&\bf	0.10	&	28.98	&\bf	0.09	&\bf	499	&	7260	&	10776521	&\bf	2	&	4776	&\bf	1093	\\
	&		&	5	&	---	&(	5	)&	---	&(	5	)&	100.00	&\bf	0.72	&	76.35	&\bf	0.62	&\bf	1045	&	4899	&	7356115	&\bf	7	&	10281	&\bf	378	\\
	&		&	10	&	---	&(	5	)&\bf	1848.85	&\bf(	0	)&	100.00	&\bf	0.18	&	85.38	&\bf	0.00	&\bf	1955	&	4258	&	4378226	&\bf	4	&	4283	&\bf	168	\\
\cline{2-17}	&	\multirow{3}{*}{A}	&	2	&\bf	4681.25	&\bf(	0	)&	---	&(	5	)&	91.28	&\bf	11.43	&\bf	0.00	&	11.43	&\bf	499	&	6849	&	3057137	&\bf	2	&\bf	121	&	975	\\
	&		&	5	&	---	&(	5	)&	---	&(	5	)&	100.00	&\bf	17.39	&	65.13	&\bf	17.17	&\bf	1045	&	5476	&	2139768	&\bf	4	&	2517	&\bf	415	\\
	&		&	10	&	---	&(	5	)&	---	&(	5	)&	100.00	&\bf	10.28	&	76.58	&\bf	8.88	&\bf	1955	&	6105	&	3577556	&\bf	56	&	1976	&\bf	244	\\
\cline{1-17}\multirow{18}{*}{50}	&	\multirow{3}{*}{W}	&	2	&	---	&(	1	)&\bf	331.87	&\bf(	0	)&	94.13	&\bf	0.00	&	34.44	&\bf	0.00	&\bf	554	&	8094	&	24416531	&\bf	1	&	6585	&\bf	1464	\\
	&		&	5	&	---	&(	1	)&\bf	410.87	&\bf(	0	)&	100.00	&\bf	0.00	&	76.08	&\bf	0.00	&\bf	1160	&	5292	&	9438723	&\bf	1	&	9646	&\bf	466	\\
	&		&	10	&	---	&(	1	)&\bf	1005.02	&\bf(	0	)&	100.00	&\bf	0.00	&	84.68	&\bf	0.00	&\bf	2170	&	4914	&	5017512	&\bf	1	&	3878	&\bf	225	\\
\cline{2-17}	&	\multirow{3}{*}{C}	&	2	&\bf	0.34	&\bf(	0	)&	---	&(	1	)&	75.02	&\bf	30.33	&\bf	0.00	&	30.31	&\bf	554	&	80356	&	367	&\bf	37	&\bf	22	&	1064	\\
	&		&	5	&\bf	379.06	&\bf(	0	)&	---	&(	1	)&	100.00	&\bf	41.37	&\bf	0.00	&	41.37	&\bf	1160	&	15314	&	443313	&\bf	14	&\bf	137	&	143	\\
	&		&	10	&	---	&(	1	)&	---	&(	1	)&	100.00	&\bf	37.49	&	46.67	&\bf	36.94	&\bf	2170	&	12538	&	5837062	&\bf	213	&	2937	&\bf	98	\\
\cline{2-17}	&	\multirow{3}{*}{K}	&	2	&\bf	1135.78	&\bf(	0	)&	---	&(	1	)&	91.28	&\bf	15.46	&\bf	0.00	&	15.46	&\bf	554	&	10042	&	1334361	&\bf	3	&\bf	84	&	926	\\
	&		&	5	&	---	&(	1	)&	---	&(	1	)&	100.00	&\bf	24.86	&	68.28	&\bf	24.86	&\bf	1160	&	6541	&	4368607	&\bf	4	&	6095	&\bf	324	\\
	&		&	10	&	---	&(	1	)&	---	&(	1	)&	100.00	&\bf	23.05	&	79.98	&\bf	23.04	&\bf	2170	&	7164	&	2448072	&\bf	15	&	1851	&\bf	205	\\
\cline{2-17}	&	\multirow{3}{*}{D}	&	2	&	---	&(	1	)&\bf	328.07	&\bf(	0	)&	94.07	&\bf	0.00	&	37.30	&\bf	0.00	&\bf	554	&	8035	&	12235346	&\bf	1	&	7096	&\bf	1485	\\
	&		&	5	&	---	&(	1	)&\bf	4430.48	&\bf(	0	)&	100.00	&\bf	0.08	&	78.70	&\bf	0.00	&\bf	1160	&	5415	&	5502769	&\bf	5	&	8618	&\bf	485	\\
	&		&	10	&	---	&(	1	)&\bf	1408.05	&\bf(	0	)&	100.00	&\bf	0.00	&	86.81	&\bf	0.00	&\bf	2170	&	4914	&	3617149	&\bf	2	&	4412	&\bf	219	\\
\cline{2-17}	&	\multirow{3}{*}{S}	&	2	&	---	&(	1	)&\bf	516.57	&\bf(	0	)&	93.95	&\bf	0.00	&	37.29	&\bf	0.00	&\bf	554	&	7579	&	8797114	&\bf	1	&	4912	&\bf	1387	\\
	&		&	5	&	---	&(	1	)&	---	&(	1	)&	100.00	&\bf	0.57	&	79.68	&\bf	0.57	&\bf	1160	&	5704	&	5750451	&\bf	5	&	9004	&\bf	508	\\
	&		&	10	&	---	&(	1	)&\bf	3413.97	&\bf(	0	)&	100.00	&\bf	0.00	&	87.36	&\bf	0.00	&\bf	2170	&	4962	&	3126980	&\bf	5	&	5175	&\bf	230	\\
\cline{2-17}	&	\multirow{3}{*}{A}	&	2	&	---	&(	1	)&	---	&(	1	)&	91.49	&\bf	10.36	&	19.20	&\bf	10.36	&\bf	554	&	8056	&	3163853	&\bf	2	&\bf	1161	&	1369	\\
	&		&	5	&	---	&(	1	)&	---	&(	1	)&	100.00	&\bf	19.06	&	67.25	&\bf	18.75	&\bf	1160	&	5872	&	2177800	&\bf	4	&	3290	&\bf	542	\\
	&		&	10	&	---	&(	1	)&	---	&(	1	)&	100.00	&\bf	10.17	&	78.48	&\bf	9.36	&\bf	2170	&	5764	&	2718050	&\bf	16	&	1862	&\bf	268	\\
\cline{1-17}\multicolumn{3}{c}{\bf Total Average:} 					&	645.10	&(	229	)&\bf	772.80	&\bf(	171	)&	96.71	&\bf	10.19	&	35.81	&\bf	7.98	&\bf	897	&	13958	&	6581088	&\bf	1111	&	3014	&\bf	427	\\
\hline
\end{tabular}%
\end{adjustbox}
   \caption{Results for \cite{EWC74} instances for $\ell_1$-norm \label{tab:EilonL1}}}
\end{table}%

The branch-and-price algorithm is able to solve optimally 58 instances more than the compact formulation. However, for some instances (mainly Center and $k$-Center problems or when $p=2$) the solved instances with the compact formulation need less CPU time. Thus, the first conclusion could be that when $p$ increases decomposition techniques become more important because the number of variables is not so dependant of this parameter. The second conclusion from the results is that the branch-and-price is a very  powerful tool when the gap at the root node is close to zero which does not happen when a big percentage of the positions of the $\lambda$-vector are zeros. Concerning the memory used by the tested formulations, the compact formulation needs bigger branch-and-bound trees to deal with fractional solutions whereas that branch-and-price uses more variables. 

Since the average gap at termination for the branch-and-price algorithm is much smaller than the one obtained by the compact formulation (7.98\% against 35.81\%) we will use decomposition-based algorithms to study medium- and large-sized instances.

\subsection{Computational performance of the matheuristics}

Finally, in this section, we show the performance of our matheuristics procedures to solve larger instances. Firstly, we will test them for $n=50$ where the solutions can be compared with the theoretic bounds provided by the exact method. Secondly, we will compare them using a bigger instance of 654 demand points in the following section.

In order to obtain Table \ref{tab:Heur50}, instances generated with the 50 demand points described in \cite{EWC74} are solved with 18 different configurations of ordered weighted median functions and number of open servers. A time limit of 2 hours was fixed for this experiment. Each of these 18 problems has been solved by means of the following strategies: branch-and-price procedure (B\&P); the heuristic used to generate initial columns (InitialHeur); the decomposition-based heuristic (Matheur); and the aggregation-based approaches of Section \ref{sec:aggregation} (KMEAN-20, KMEAN-30, PTF-20, PTF-30) for $|\A^\prime|=\{20,30\}$. The reported results are the CPU time and the gap (\texttt{GAP$_{LB}$(\%)}) which is calculated with respect to the lower bound of the branch-and-price algorithm when the time limit is reached. Thereby we have a theoretic gap knowing exactly the room for improvement of our heuristics. 

The branch-and-price methods report always the best performance except in one instance. In general they present less gap and, in average, it is better not wasting the time solving the exact pricer letting the algorithm go further adding columns or branching before certifying optimality. Thus, with Matheur strategy we obtain an 11.23\% of average gap. In fact, this matheuristic finds the optimal solution (certified by the exact method) at least in six instances. Concerning the time the other heuristics obtain good quality solution in much smaller CPU times. For the Eilon dataset aggregating in more demand points gives better results although the CPU time increases. However, for some instances where the application for $n=20$ certifies optimality but  $n=30$ does not, the first option could work better in a fixed time limit.

 \begin{table}
  \centering{
  \begin{adjustbox}{max width=1.0\textwidth}
  \begin{tabular}{ccrrrrrrrrrrrrrrcc}\hline
    \texttt{type} &$p$ & \multicolumn{2}{c}{ B\&P} & \multicolumn{2}{c}{ InitialHeur}& \multicolumn{2}{c}{ Matheur}& \multicolumn{2}{c}{ KMEAN-20}& \multicolumn{2}{c}{ KMEAN-30}& \multicolumn{2}{c}{ PTF-20}& \multicolumn{2}{c}{ PTF-30} \\
     \cmidrule(lr){1-2}\cmidrule(lr){3-4}\cmidrule(lr){5-6}\cmidrule(lr){7-8}\cmidrule(lr){9-10}\cmidrule(lr){11-12}\cmidrule(lr){13-14}\cmidrule(lr){15-16}
   &  & \texttt{Time} &\texttt{GAP$_{LB}$(\%)}& \texttt{Time} &\texttt{GAP$_{LB}$(\%)}& \texttt{Time} &\texttt{GAP$_{LB}$(\%)}& \texttt{Time} &\texttt{GAP$_{LB}$(\%)}& \texttt{Time} &\texttt{GAP$_{LB}$(\%)}& \texttt{Time} &\texttt{GAP$_{LB}$(\%)}& \texttt{Time} &\texttt{GAP$_{LB}$(\%)} \\
   \hline

\cline{1-16}\multirow{3}{*}{W}	&	2	&	331.87	&\bf	0.00	&	0.00	&	1.44	&	134.54	&\bf	0.00	&	74.60	&	3.10	&	5204.85	&	2.80	&	35.14	&	7.58	&	7200.18	&	2.50	\\
	&	5	&	410.87	&\bf	0.00	&	0.00	&	11.53	&	9.51	&\bf	0.00	&	21.47	&	12.55	&	398.93	&	11.25	&	16.38	&	17.88	&	187.03	&	7.90	\\
	&	10	&	1005.02	&\bf	0.00	&	0.00	&	21.57	&	2.84	&\bf	0.00	&	4.23	&	17.16	&	59.08	&	8.79	&	16.11	&	23.45	&	57.35	&	9.88	\\
\cline{1-16}\multirow{3}{*}{C}	&	2	&	7200.64	&	43.49	&	0.00	&	43.49	&	6.61	&	39.68	&	7200.16	&	47.05	&	7200.52	&	32.36	&	7200.21	&\bf	31.46	&	7200.47	&	41.39	\\
	&	5	&	7200.19	&	70.56	&	0.00	&	73.45	&	7200.00	&\bf	46.66	&	7200.09	&	82.76	&	7200.10	&	76.52	&	7200.09	&	94.74	&	7200.09	&	72.70	\\
	&	10	&	7200.19	&	58.58	&	0.00	&	101.43	&	7200.18	&\bf	28.93	&	7200.16	&	104.98	&	7200.19	&	73.44	&	7200.16	&	154.72	&	7200.18	&	70.54	\\
\cline{1-16}\multirow{3}{*}{K}	&	2	&	7200.13	&\bf	18.28	&	0.00	&	21.40	&	227.64	&	19.79	&	3589.02	&	21.19	&	7200.40	&	22.00	&	7200.35	&	20.05	&	7200.57	&	18.81	\\
	&	5	&	7200.34	&	33.09	&	0.00	&	33.09	&	392.07	&\bf	22.37	&	7200.10	&	26.84	&	7200.15	&	36.08	&	7200.10	&	42.99	&	7200.13	&	30.43	\\
	&	10	&	7200.28	&	29.94	&	0.00	&	41.73	&	864.26	&\bf	11.79	&	7200.17	&	35.38	&	7200.20	&	19.91	&	7200.17	&	41.97	&	7200.19	&	15.93	\\
\cline{1-16}\multirow{3}{*}{D}	&	2	&	328.07	&\bf	0.00	&	0.00	&	1.45	&	130.00	&\bf	0.00	&	42.82	&	3.76	&	5213.35	&	2.82	&	18.77	&	7.70	&	5105.83	&	2.47	\\
	&	5	&	4430.48	&\bf	0.00	&	0.00	&	11.43	&	10.73	&\bf	0.00	&	7.87	&	12.53	&	162.45	&	10.95	&	17.50	&	16.36	&	191.88	&	8.13	\\
	&	10	&	1408.05	&\bf	0.00	&	0.00	&	21.38	&	3.06	&	0.43	&	4.76	&	17.00	&	50.22	&	10.18	&	22.26	&	23.27	&	135.57	&	9.07	\\
\cline{1-16}\multirow{3}{*}{S}	&	2	&	516.57	&\bf	0.00	&	0.00	&	1.62	&	111.73	&\bf	0.00	&	25.43	&	2.97	&	7200.20	&	2.59	&	35.77	&	7.34	&	7200.27	&	2.33	\\
	&	5	&	7200.43	&\bf	0.57	&	0.00	&	12.28	&	14.96	&	0.64	&	36.68	&	12.60	&	498.07	&	11.81	&	41.88	&	18.33	&	746.57	&	8.26	\\
	&	10	&	3413.97	&\bf	0.00	&	0.00	&	21.24	&	3.22	&	0.66	&	10.99	&	16.86	&	29.98	&	7.98	&	50.52	&	23.29	&	114.10	&	8.97	\\
\cline{1-16}\multirow{3}{*}{A}	&	2	&	7200.39	&	11.56	&	0.00	&	11.56	&	155.18	&	10.91	&	2611.77	&\bf	9.05	&	7200.15	&	10.62	&	7200.41	&	13.49	&	7200.09	&	13.69	\\
	&	5	&	7200.40	&	23.08	&	0.00	&	23.61	&	325.35	&\bf	14.00	&	7200.10	&	19.96	&	7200.16	&	18.63	&	7200.11	&	26.03	&	7200.13	&	22.18	\\
	&	10	&	7200.24	&	10.33	&	0.00	&	31.65	&	129.77	&\bf	6.29	&	7200.17	&	17.48	&	7200.21	&	16.63	&	7200.17	&	33.49	&	7200.21	&	21.82	\\
\cline{1-16}\multicolumn{2}{c}{\bf Total Average:} 			&	4658.23	&	16.64	&	0.00	&	26.97	&	940.09	&\bf	11.23	&	3157.25	&	25.73	&	4645.51	&	20.85	&	3614.23	&	33.56	&	4763.38	&	20.39	\\

\hline
\end{tabular}%
\end{adjustbox}
   \caption{Heuristic results for instances of $n=50$, \cite{EWC74} \label{tab:Heur50}}}
\end{table}%

Table \ref{tab:Heur654} presents a similar structure that the previous table. However for $n=654$ the branch-and-price is not able to give us a good lower bound even increasing the time limit to 24 hours or using \eqref{eq:LB}. Hence, here we calculate \texttt{GAP$_{Best}$(\%)} as the gap with respect to the best known integer solution. For these large-sized problems the best solutions are found by the decomposition-based matheuristic in average, but the improvement from the initial heuristic is null for some cases.

Some instances have the best performance using KMEAN-20 or PTF-20 matheusristics. It is not appreciated a big improvement taking 30 points instead of 20 for the aggregation method. To find an explanation for that, Figure \ref{fig:626Wp5} depicts the aggregation (triangular points) and the solution (square points) for a particular instance. The reader can see how the demand points are concentrated by zones. Adding more points to  $\A^\prime$ gives an importance to some aggregated points that does not represent properly the original data of this instance of $n=654$. In this case, we can see an example for which the aggregation algorithm works better under the {\em less is more} paradigm.

 \begin{table}
  \centering{
  \begin{adjustbox}{max width=1.0\textwidth}
  \begin{tabular}{ccrrrrrrrrrrrrrrcc}\hline
    \texttt{type} &$p$ & \multicolumn{2}{c}{ B\&P} & \multicolumn{2}{c}{ InitialHeur}& \multicolumn{2}{c}{ Matheur}& \multicolumn{2}{c}{ KMEAN-20}& \multicolumn{2}{c}{ KMEAN-30}& \multicolumn{2}{c}{ PTF-20}& \multicolumn{2}{c}{ PTF-30} \\
     \cmidrule(lr){1-2}\cmidrule(lr){3-4}\cmidrule(lr){5-6}\cmidrule(lr){7-8}\cmidrule(lr){9-10}\cmidrule(lr){11-12}\cmidrule(lr){13-14}\cmidrule(lr){15-16}
   &  & \texttt{Time} &\texttt{GAP$_{Best}$(\%)}& \texttt{Time} &\texttt{GAP$_{Best}$(\%)}& \texttt{Time} &\texttt{GAP$_{Best}$(\%)}& \texttt{Time} &\texttt{GAP$_{Best}$(\%)}& \texttt{Time} &\texttt{GAP$_{Best}$(\%)}& \texttt{Time} &\texttt{GAP$_{Best}$(\%)}& \texttt{Time} &\texttt{GAP$_{Best}$(\%)} \\
   \hline

\cline{1-16}\multirow{3}{*}{W}	&	2	&	86441.18	&	9.74	&	0.06	&	9.74	&	86441.18	&	9.74	&	13.31	& \bf	0.00	&	755.99	&	30.34	&	32.59	&	10.32	&	697.98	&	29.60	\\
	&	5	&	86444.11	& \bf	0.00	&	0.06	&	0.51	&	86444.11	& \bf	0.00	&	6.56	&	28.51	&	97.79	&	58.33	&	7.46	&	49.02	&	74.85	&	55.91	\\
	&	10	&	86439.34	& \bf	0.00	&	0.06	&	15.39	&	86439.34	& \bf	0.00	&	2.64	&	74.95	&	31.49	&	84.48	&	1.58	&	123.62	&	68.11	&	103.19	\\
\cline{1-16}\multirow{3}{*}{C}	&	2	&	86407.88	&	4.16	&	0.07	&	4.16	&	86407.32	& \bf	0.00	&	420.97	&	8.91	&	86400.83	&	3.43	&	5765.05	& \bf	0.00	&	86401.06	& \bf	0.00	\\
	&	5	&	86407.52	&	6.54	&	0.06	&	8.92	&	51003.35	&	4.47	&	86424.42	&	32.61	&	86400.09	& \bf	0.00	&	86400.15	&	20.98	&	86400.09	&	1.64	\\
	&	10	&	86408.46	&	18.78	&	0.06	&	18.78	&	83212.74	&	1.85	&	42425.62	&	29.26	&	86400.17	& \bf	0.00	&	86400.16	&	60.83	&	86400.13	&	0.64	\\
\cline{1-16}\multirow{3}{*}{K}	&	2	&	86424.57	&	0.92	&	0.06	&	0.92	&	86424.57	&	0.92	&	274.39	& \bf	0.00	&	40549.49	&	10.08	&	365.79	&	8.36	&	15942.12	&	6.04	\\
	&	5	&	86424.89	& \bf	0.00	&	0.06	& \bf	0.00	&	86424.89	& \bf	0.00	&	854.51	&	24.50	&	86400.13	&	47.40	&	2868.28	&	34.04	&	86400.26	&	28.16	\\
	&	10	&	86425.31	& \bf	0.00	&	0.06	& \bf	0.00	&	86425.31	& \bf	0.00	&	6695.50	&	88.17	&	86400.19	&	55.15	&	13189.85	&	97.91	&	86400.14	&	70.97	\\
\cline{1-16}\multirow{3}{*}{D}	&	2	&	86440.98	&	11.02	&	0.07	&	11.02	&	86440.98	&	11.02	&	29.22	& \bf	0.00	&	737.65	&	31.24	&	23.55	&	11.31	&	669.43	&	30.96	\\
	&	5	&	86440.05	& \bf	0.00	&	0.06	&	0.51	&	86440.05	& \bf	0.00	&	15.08	&	29.39	&	123.71	&	60.16	&	6.47	&	51.81	&	96.06	&	61.08	\\
	&	10	&	86439.96	& \bf	0.00	&	0.06	&	15.35	&	86439.96	& \bf	0.00	&	7.19	&	73.21	&	21.26	&	75.23	&	3.05	&	122.59	&	106.33	&	103.25	\\
\cline{1-16}\multirow{3}{*}{S}	&	2	&	86440.27	&	8.02	&	0.07	&	8.02	&	86440.27	&	8.02	&	19.02	& \bf	0.00	&	427.39	&	27.13	&	27.17	&	9.27	&	515.89	&	25.98	\\
	&	5	&	86439.85	& \bf	0.00	&	0.07	&	0.52	&	86439.85	& \bf	0.00	&	9.14	&	28.84	&	200.77	&	57.12	&	6.68	&	49.57	&	143.32	&	59.67	\\
	&	10	&	86440.73	& \bf	0.00	&	0.06	&	15.80	&	86440.73	& \bf	0.00	&	6.69	&	75.73	&	32.23	&	81.06	&	2.44	&	122.72	&	107.15	&	103.53	\\
\cline{1-16}\multirow{3}{*}{A}	&	2	&	86439.83	&	3.58	&	0.06	&	3.59	&	86439.83	&	3.58	&	264.51	& \bf	0.00	&	11439.37	&	8.96	&	416.03	&	6.13	&	6083.81	&	7.90	\\
	&	5	&	86443.12	& \bf	0.00	&	0.06	& \bf	0.00	&	86443.12	& \bf	0.00	&	256.67	&	23.94	&	40907.89	&	38.23	&	414.57	&	31.07	&	15986.23	&	29.51	\\
	&	10	&	86438.72	& \bf	0.00	&	0.06	& \bf	0.00	&	86438.72	& \bf	0.00	&	4315.73	&	39.30	&	86400.18	&	65.88	&	29377.52	&	69.98	&	86400.14	&	66.13	\\
\cline{1-16}\multicolumn{2}{c}{\bf Total Average:} 			&	86432.60	&	3.49	&	0.06	&	6.29	&	84288.13	& \bf	2.20	&	7891.18	&	30.96	&	34095.92	&	40.79	&	12517.13	&	48.86	&	31049.62	&	43.56	\\
\hline
\end{tabular}%
\end{adjustbox}
   \caption{Heuristic results for instances of $n=654$, \cite{Beasley} \label{tab:Heur654}}}
\end{table}%

\begin{figure}	
	\begin{subfigure}{0.5\textwidth}
		\centering
	\fbox{\includegraphics[scale=0.3]{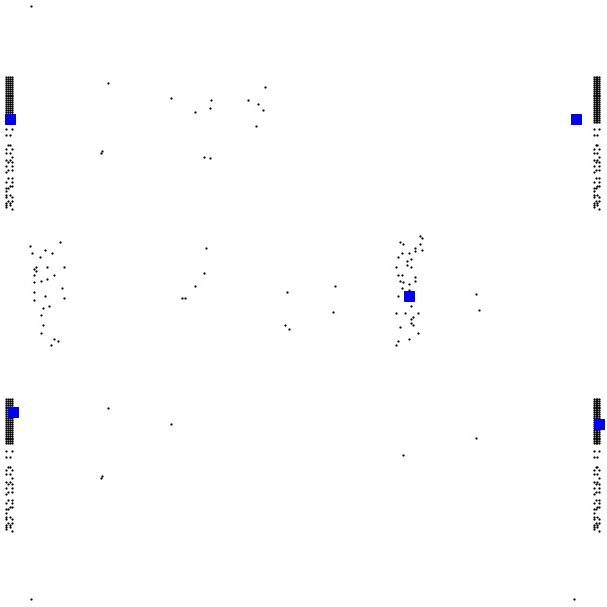}}
	\caption{InitialHeur}  \label{performanceProfilesBranching}
	\end{subfigure}%
	\begin{subfigure}{0.5\textwidth}
		\centering
	\fbox{\includegraphics[scale=0.3]{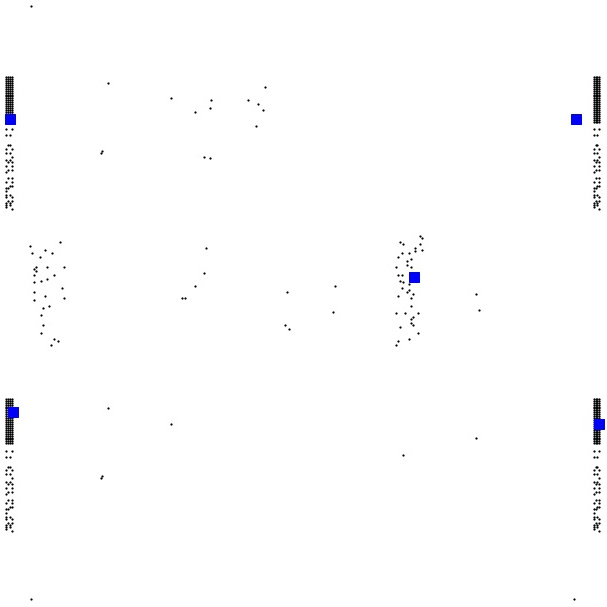}}
	\caption{Matheur}  \label{performanceProfilesBranching}
	\end{subfigure}%
		
	\begin{subfigure}{0.5\textwidth}
		\centering
	\fbox{\includegraphics[scale=0.3]{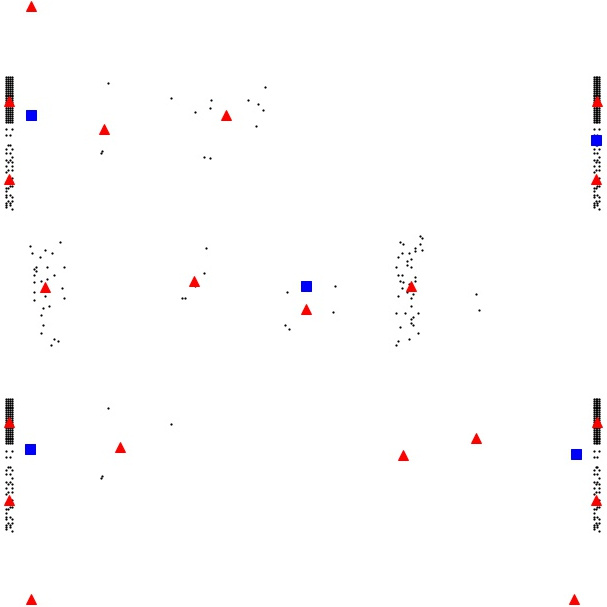}}
	\caption{KMEAN-20}  \label{performanceProfilesBranching}
	\end{subfigure}%
	\begin{subfigure}{0.5\textwidth}
		\centering
			\fbox{\includegraphics[scale=0.3]{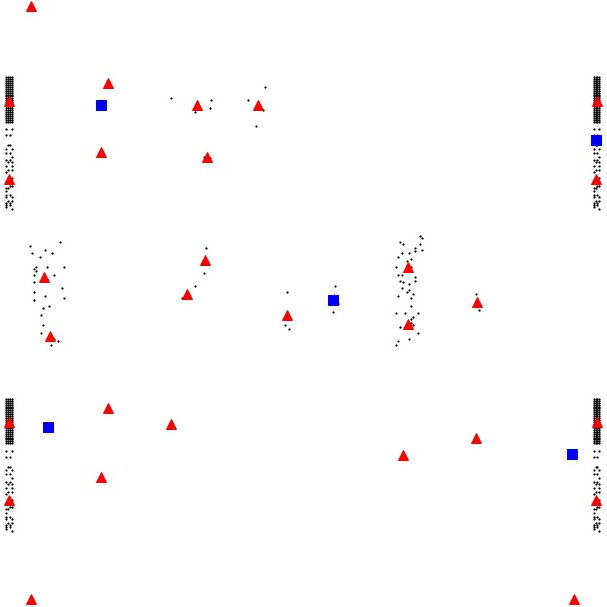}}
	\caption{KMEAN-30}  \label{performanceProfilesBranching}
	\end{subfigure}%
	\caption{Solutions for $n=654$ \citep[][]{Beasley}, W, $p=5$, and $\ell_1$-norm}  \label{fig:626Wp5}
	\end{figure}

\section{Conclusions}\label{sec:conclusions}
In this work, the Continuous Multifacility Monotone Ordered Median Problem is analyzed. This problem finds solutions in a continuous space and to solve it we have proposed two exact methods, namely a compact formulation and a branch-and-price procedure, using binary variables. Along the paper, we give full details of the branch-and-price algorithm and all its crucial steps: master problem, restricted relaxed master problem, pricing problem, initial pool of columns, feasibility, convergence, and branching. 

Moreover, theoretic and empirical results have proven the utility of the obtained lower bound. Using that bound, we have tested the new proposed matheuristics. The decomposition-based heuristics have shown a very good performance on the computational experiments. For large-sized instances, it is worth highlighting that the exact procedure has improved the initial heuristic from 6.29\% to 3.49\% in average what in real applications could make a difference. 

Among the extensive computational experiments and configurations of the problem, we highlight the usefulness of the branch-and-price approach for medium- to large-sized instances, but also the utility of the compact formulation and the aggregation-based heuristics for small values of $p$ or for some particular ordered weighted median functions.

Further research on the topic includes the design of similar branch-and-price approaches to other continuous facility location and clustering problems. Specifically, the application of set-partitioning column generation methods to hub location and covering problems with generalized upgrading \citep[see, e.g.,][]{BM19} where the index set for the $y$-variables must be adequately defined.

\section*{Acknowledgements}

The authors of this research acknowledge financial support by the Spanish Ministerio de Ciencia y Tecnolog\'ia, Agencia Estatal de Investigacion and Fondos Europeos de Desarrollo Regional (FEDER) via project PID2020-114594GB-C21. The first, third and fourth authors also acknowledge partial support from projects FEDER-US-1256951, Junta de Andalucía P18-FR-1422, CEI-3-FQM331, FQM-331, and NetmeetData: Ayudas Fundación BBVA a equipos de investigación cient\'ifica
2019. The first and second authors were partially supported by research group SEJ-584 (Junta de Andaluc\'ia). The second author was supported by Spanish Ministry of Education and Science grant number PEJ2018-002962-A and the Doctoral Program in Mathematics at the Universidad of Granada. The third author also acknowledges the grant  Contrataci\'on de Personal Investigador Doctor (Convocatoria 2019) 43 Contratos Capital Humano L\'inea 2 Paidi 2020, supported by the European Social Fund and Junta de Andaluc\'ia.

\bibliographystyle{apa}

\begin{thebibliography}{10}


\bibitem[Albornoz \& Zamora(2021)]{AZ2021}
{Albornoz, V.~M., \& Zamora, G.~E.} (2021).
\newblock Decomposition-based heuristic for the zoning and crop planning problem with adjacency constraints.
\newblock {\em Top}, 29(1), 248--265.

\bibitem[Avella et al.(2007)]{ASV2007}
Avella, P., Sassano A., \& Vasil’ev, I. (2007).
\newblock Computational study of large-scale $p$-Median problems.
\newblock {\em Mathematical Programming Series A}, 109, 89--114.

\bibitem[Barnhart et al.(1998)]{BJNSV1998}
Barnhart, C., Johnson, E.~L., Nemhauser, G.~L., Savelsbergh M.~W.~P., \& Vance, P.~H. (1998).
\newblock Branch-and-price: column generation for solving huge integer programs.
\newblock {\em Operations Research}, 46(3), 316--319.


\bibitem[Beasley(1985)]{Beasley85}
{Beasley J.~E.} (1985).
\newblock A note on solving large p-median problems.
\newblock {\em European Journal of Operational Research}, 21(2), 270--273.  


\bibitem[Beasley(1990)]{Beasley}
{Beasley J.~E.} (1990).
\newblock OR-Library: Distributing Test Problems by Electronic Mail.
\newblock {\em Journal of the Operational Research Society}, 41, 1069--1072.  
\newblock  \url{http://people.brunel.ac.uk/~mastjjb/jeb/info.html}

\bibitem[Benati et al.(2021)]{BCPP2021}
{Benati, S., Ponce, D., Puerto, J., \& Rodr\'iguez-Ch\'ia, A.~M.} (2021).
\newblock A branch-and-price procedure for clustering data that are graph connected.
\newblock {\em European Journal of Operational Research}. 

\url{https://doi.org/10.1016/j.ejor.2021.05.043}

\bibitem[Blanco(2019)]{B19}
Blanco, V. (2019). 
\newblock Ordered p-median problems with neighbourhoods. 
\newblock {\em Computational Optimization and Applications}, 73(2), 603-645.

\bibitem[Blanco et al.(2014)]{BEP2014}
{Blanco, V., ElHaj~BenAli, S., \& Puerto, J.,} (2014).
\newblock Revisiting several problems and algorithms in continuous location.
with $\ell_\tau$-norms.
\newblock {\em Computational Optimization and Applications}, 58(3),
563--595.

\bibitem[Blanco \& G\'azquez(2021)]{BG21}
Blanco, V., \& Gázquez, R. (2021). 
\newblock Continuous maximal covering location problems with interconnected facilities. 
\newblock {\em Computers \& Operations Research}, 132, 105310.

\bibitem[Blanco et al.(2021)]{BJPP21}
{Blanco, V., Jap{\'o}n, A., Ponce, D., \& Puerto, J.} (2021).
\newblock On the multisource hyperplanes location problem to fitting set of points.
\newblock {\em Computers \& Operations Research}, 128, 105124.

\bibitem[Blanco \& Mar\'in(2019)]{BM19} 
Blanco, V., \& Marín, A. (2019). 
\newblock Upgrading nodes in tree-shaped hub location. 
\newblock {\em Computers \& Operations Research}, 102, 75-90.

\bibitem[Blanco et al.(2016)]{BEP2016}
Blanco, V., Puerto, J., \& ElHaj~BenAli, S. (2016).
\newblock Continuous multifacility ordered median location problems.
\newblock {\em European Journal of Operational Research}, 250(1), 56--64.


\bibitem[Blanco et al.(2017)]{BPP2017}
 Blanco, V., Puerto, J., \& Ponce, D. (2017).
\newblock  Continuous location under the effect of refraction.
\newblock {\em Mathematical Programming}, 161(1), 33--72.


\bibitem[Blanco et al.(2018)]{BPS2018}
Blanco, V., Puerto, J., \& Salmer{\'o}n, R. (2018).
\newblock Locating hyperplanes to fitting set of points: A general framework.
\newblock {\em Computers \& Operations Research}, 95, 172--193.


\bibitem[Brimberg(1995)]{B1995}
Brimberg, J. (1995).
\newblock The Fermat--Weber location problem revisited.
\newblock {\em Mathematical Programming}, 71, 71--76.

\bibitem[Carrizosa et al.(1995)]{CCMP1995} Carrizosa E., Conde, E.,  Mu\~noz, M.  \&  Puerto, J. (1995)
The generalized Weber problem with expected distances.
{\em RAIRO-Operations Research,}  29, 35--57.

\bibitem[Carrizosa et al.(1998)]{CMP1998}
{Carrizosa, E., Mu\~noz-M{\'a}rquez, M., \& Puerto, J.} (1998).
\newblock The Weber problem with regional demand.
\newblock {\em European Journal of Operational Research}, 104(2), 358--365.


\bibitem[Chandrasekaran \& Tamir(1989)]{CT1989}
Chandrasekaran, R., \& Tamir, A. (1989).
\newblock Open questions concerning Weiszfeld's algorithm for the Fermat-Weber location problem.
\newblock {\em Mathematical Programming}, 44, 293--295.



\bibitem[Current \& Schilling(1990)]{CS90}
Current, J.R., \& Schilling, D.A. (1990).
\newblock Analysis of errors due to demand data aggregation in the set covering and maximal covering location problems.
\newblock {\em Geographical Analysis}, 22(2), 116--126. 

\bibitem[Daskin et al.(1989)]{daskin89}
Daskin, M.~S., Haghani, A.~E., Khanal, M., \& Malandraki, C. (1989).
\newblock Aggregation effects in maximum covering models.
\newblock {\em Annals of Operations Research}, 18(1), 113--139. 

\bibitem[Deleplanque et al.(2020)]{Deleplanque2020}
Deleplanque, S., Labb\'e, M., Ponce, D., \& Puerto, J. (2020).
\newblock A Branch-Price-and-Cut Procedure for the Discrete Ordered Median Problem.
\newblock {\em INFORMS Journal on Computing}, 32(3), 582--599. 

\bibitem[Desrosiers \& L\"ubecke(2005)]{DL2005}
Desrosiers, J., \& L\"ubecke, M. (2005).
\newblock  A primer in column generation. In G. Desaulniers, J. Desrosiers \& M.~M. Salomon (Eds.), 
\newblock  {\em Column Generation} (pp. 1--32). Springer, Boston.

\bibitem[Drezner \& Hamacher(2002)]{DH2002}
Drezner, Z., \& Hamacher, H.~W. (Eds.). (2002).
\newblock  {\em Facility Location: Applications and Theory}. 
\newblock   Springer-Verlag Berlin Heidelberg.

\bibitem[Eilon et al.(1971)]{EWC74}
Eilon, S., Watson-Gandy, C. D.~T., \& Christofides, N. (1971).
\newblock {\em Distribution management: mathematical modeling and practical analysis}.
\newblock London: Griffin.

\bibitem[Elloumi et al.(2004)]{ELP2004}
Elloumi, S., Labb\'e, M., \& Pochet, Y. (2004).
\newblock A New Formulation and Resolution Method for the p-Center Problem.
\newblock {\em INFORMS Journal on Computing}, 16(1), 84--94. 


\bibitem[Espejo et al.(2009)]{EMPR2009}
 Espejo, I., Mar\'in, A., Puerto, J., \& Rodr\'iguez-Ch\'ia, A.~M. (2009).
\newblock  A comparison of formulations and solution methods for the Minimum-Envy Location Problem.
\newblock {\em Computers \& Operations Research}, 36(6), 1966--1981.

\bibitem[Espejo et al.(2021)]{EPR21}  Espejo, I.,  Puerto, J., \& Rodriguez-Chia,  A.~M. (2021).  A comparative study of different formulations for the Capacitated Discrete Ordered Median Problem. {\em Computers \& Operations Research,} 125, 105067.

\bibitem[Fekete et al.(2005)]{FMB2005}
Fekete, S.~P., Mitchell, J.~S.~B., \& Beurer, K. (2005).
\newblock On the Continuous Fermat-Weber Problem.
\newblock {\em Operations Research}, 53(1), 61--76.


\bibitem[Fern\'andez et al.(2014)]{FPP2014}
 Fern\'andez, E. Pozo, M.~A., \& Puerto, J. (2014).
\newblock  Ordered Weighted Average Combinatorial Optimization: Formulations and their properties.
\newblock {\em Discrete Applied Mathematics}, 169(31), 97--118.

\bibitem[Gamrath et al.(2020)]{scip}
Gamrath, G., Anderson, D., Bestuzheva, K., et al. (2020)
\newblock The SCIP Optimization Suite 7.0.
\newblock {\em ZIB-Report 20-10, Zuse Institute Berlin.} \url{http://nbn-resolving.de/urn:nbn:de:0297-zib-78023}

\bibitem[Garc\'ia et al.(2010)]{GLM2010}
Garc\'ia, S., Labb\'e, M., \& Mar\'in, A. (2010).
\newblock Solving Large p-Median Problems with a Radius Formulation.
\newblock {\em INFORMS Journal on Computing}, 23(4), 546--556. 

\bibitem[Geoffrion(1977)]{geoffrion1977objective}
Geoffrion, A.~M. (1977).
\newblock Objective function approximations in mathematical programming. 
\newblock {\em Mathematical Programming}, 13(1), 23--37.

\bibitem[Irawan(2016)]{irawan16}
Irawan, C.~A. (2016).
\newblock Aggregation and non aggregation techniques for large facility location problems-a survey.
\newblock {\em Yugoslav Journal of Operations Research}, 25(3).

\bibitem[Krau(1997)]{K1997}
Krau, S. (1997).
\newblock {\em Extensions du probl\'eme de Weber}
\newblock (Ph. D. Dissertation, \'Ecole Polytechnique de Montr\'eal, Montr\'eal, Canada). 

\bibitem[Labb\'e et al.(2017)]{LPP17} Labb\'e, M., Ponce D., \& Puerto J. (2017) A Comparative Study of Formulations and Solution Methods for the Discrete Ordered $p$-Median Problem. {\em Computers \& Operations Research}, 78,  230--242. 

\bibitem[Laporte et al.(2015)]{LNS2015}
Laporte, G., Nickel, S., \& Saldanha da Gama, F. (Eds.). (2015).
\newblock  {\em Location Science}. 
\newblock   Springer International Publishing.

\bibitem[Mallozi et al.(2019)]{MPR19} Mallozzi, L., Puerto, J., \& Rodr{\'\i}guez-Madrena, M. (2019). 
\newblock On location-allocation problems for dimensional facilities. 
\newblock {\em Journal of Optimization Theory and Applications}, 182(2), 730--767. 

\bibitem[Manzour-al-Ajdad et al.(2012)]{MTE2012}
{Manzour-al-Ajdad, S.~M.~H., Torabi, S.~A., \& Eshghi, K.} (2012).
\newblock Single-Source Capacitated Multi-Facility Weber Problem—An iterative two phase heuristic algorithm.
\newblock {\em Computers \& Operations Research}, 39(7), 1465--1476.


\bibitem[Mar\'in et al.(2009)]{MNPV09} 
{Mar\'in, A., Nickel, S.,  Puerto, J., \& Velten, S.} (2009). 
\newblock A Flexible Model and Efficient Solution Strategies for Discrete Location Problems. 
\newblock {\em Discrete Applied Mathematics}, 157, 1128-1145.

\bibitem[Mar\'in et al.(2010)]{MNV2010} 
{Mar\'in, A., Nickel, S., \& Velten, S.} (2010). 
\newblock An extended covering model for flexible discrete and equity location problems. 
\newblock {\em Mathematical Methods of Operations Research}, 71, 125-163.

\bibitem[Mar\'in et al.(2020)]{MPP2020}
{Mar\'in A., Ponce, D., \& Puerto, J.} (2020).
\newblock A fresh view on the Discrete Ordered Median Problem based on partial monotonicity.
\newblock {\em European Journal of Operational Research}, 286, 839--848.  

\bibitem[du Merle et al.(1999)]{duMerle1999}
du~Merle, O., Villenueve, D., Desrosiers, J., \&  Hansen, P. (1999).
\newblock Stabilized column generation.
\newblock {\em Discrete Mathematics}, 194, 229--237.

\bibitem[du Merle \& Vial(2002)]{duMerle2002}
du~Merle, O., \& Vial, J. (2002).
\newblock Proximal {ACCPM}, a cutting plane method for column generation and {L}agrangean relaxation: application to the $p$-median problem.
\newblock {\em Tech. rep., HEC Gen\`eve, Universit\'e de Gen\`eve}.


\bibitem[Nickel \& Puerto(2005)]{Nickel2005}
Nickel, S., \& Puerto, J. (2005).
\newblock Location Theory: A Unified Approach. 
\newblock Springer Verlag.


\bibitem[Nickel et al.(2003)]{NPR2003}
 Nickel, S., Puerto, J., \& Rodr\'iguez-Ch\'ia, A.~M. (2003).
\newblock  An Approach to Location Models Involving Sets as Existing Facilities.
\newblock {\em Mathematics of Operations Research}, 28(4), 693--715.

\bibitem[Puerto(2020)]{Puerto2020} Puerto, J. (2020).
\newblock An exact completely positive programming formulation for the Discrete Ordered Median Problem: An extended version. 
\newblock {\em Journal of Global Optimization}, 77(2), 341--359.

\bibitem[Puerto et al.(2013)]{PRR2013}
 Puerto, J., Ramos, A.~B., \& Rodr\'iguez-Ch\'ia, A.~M. (2013).
\newblock  A specialized Branch \& Bound \& Cut  for  Single-Allocation Ordered Median Hub Location problems.
\newblock {\em Discrete Applied Mathematics}, 161, 2624--2624.



\bibitem[Puerto \& Rodr\'iguez-Ch\'ia(2011)]{PR2011}
 Puerto, J., \& Rodr\'iguez-Ch\'ia, A.~M. (2011).
\newblock  On the structure of the solution set for the single facility location problem with average distances.
\newblock {\em Mathematical Programming}, 128, 373--401.



\bibitem[Puerto \& Tamir(2005)]{PT2005}
 Puerto, J., \& Tamir, A. (2005).
\newblock  Locating tree-shaped facilities using the ordered median objective.
\newblock {\em Mathematical Programming}, 102, 313--338.


\bibitem[Righini \& Zaniboni(2007)]{RZ2007}
Righini, G., \& Zaniboni, L. (2007).
\newblock A branch-and-price algorithm for the multi-source Weber problem.
\newblock {\em  International Journal of Operational Research}, 2(2), 188--207.



\bibitem[Rosing(1992)]{R1992}
Rosing, K.~E. (1992).
\newblock An optimal method for solving the (generalized) multi-Weber problem.
\newblock {\em European Journal of Operational Research}, 58(3), 414--426.

\bibitem[Ryan \& Foster(1981)]{RF1981}
Ryan, D.~M., \& Foster, B.~A. (1981).
\newblock  An integer programming approach to scheduling. In A. Wren (Ed.), 
\newblock  {\em Computer Scheduling of Public Transport} (pp. 269--280). North-Holland, Amsterdan.


\bibitem[Sakai et al.(2003)]{Sakai2003}
Sakai, S., Togasaki, M., \& Yamazaki, K. (2003).
\newblock A note on greedy algorithms for the maximum weighted independent set problem.
\newblock {\em Discrete Applied Mathematics}, 126(2), 313--322.

\bibitem[Sherali \& Nordai(1988)]{sherali1988np}
Sherali, H.~D., \& Nordai, F.~L. (1988).
\newblock Np-hard, capacitated, balanced p-median problems on a chain graph with a continuum of link demands.
\newblock {\em Mathematics of Operation	Research}, 13(1), 32--49.

\bibitem[Valero-Franco et al.(2013)]{VRE2013}
{Valero-Franco, C., Rodr\'iguez-Ch\'ia, A.~M., \& Espejo, I.} (2013).
\newblock Accelerating convergence in minisum location problem with $\ell_p$ norms.
\newblock {\em Computers \& Operations Research}, 40(11), 2770--2785.


\bibitem[Venkateshan \& Mathur(2015)]{VM2015}
Venkateshan, P., \& Mathur, K. (2015). 
\newblock A Heuristic for the Multisource Weber Problem with Service Level Constraints. 
\newblock {\em Transportation Science}, 49(3), 472-483. 


\bibitem[Ward \& Wendell(1985)]{WW85}
Ward, J.~E., \& Wendell,  R.~E. (1985).
\newblock  Using block norms for location modeling.
\newblock {\em Operations Research}, 33(5), 1074--1090.

\bibitem[Weiszfeld(1937)]{W1937}
Weiszfeld, E. (1937).
\newblock  Sur le point par lequel la somme des distances den points donn\'es est minimum.
\newblock {\em Tohoku Mathematics Journal}, 43, 355--386.

\end{thebibliography}

\clearpage
\begin{appendices}

\section{Computational results for alternative $\ell_\tau$-norms}\label{ap:norms}

In this section, we show the results of our computational experiments for other $\ell_\tau$-norms, in particular, we have considered $\tau \in \{\frac{3}{2},2,3\}$. We have shown in Theorem \ref{thm:1} the general way to formulate the constraint for general values of $\tau$. In Table \ref{tab:norms}, we present the sets of constraints for the $\tau$ considered, forall $i \in I, j \in J, l \in \{1,\ldots,d\}$. Tables \ref{tab:EilonL32}, \ref{tab:EilonL2}, and \ref{tab:EilonL3} report the results.

\begin{table}[H]
	\centering
	\begin{adjustbox}{max width=1.0\textwidth}
		\begin{tabular}{lll}
			\hline
			\multicolumn{1}{c}{$\ell_{\frac{3}{2}}$	} & \multicolumn{1}{c}{$\ell_2$} & \multicolumn{1}{c}{$\ell_3$}\\
			\cmidrule(lr){1-1}\cmidrule(lr){2-2}\cmidrule(lr){3-3}
			$t_{ijl} \geq a_{il}-x_{jl},$&$t_{ijl} \geq a_{il}-x_{jl},$ & $t_{ijl} \geq a_{il}-x_{jl},$\\
			$t_{ijl} \geq -a_{il}+x_{jl},$&$t_{ijl} \geq -a_{il}+x_{jl},$ &$t_{ijl} \geq -a_{il}+x_{jl},$\\
			$z_{ij} \geq \dsum_{l=1}^d \xi_{ijl},$&$z_{ij}^2 \geq \dsum_{l=1}^d t_{ijl}^2,$ &$z_{ij} \geq \dsum_{l=1}^d \xi_{ijl}$\\
			$t^2_{ijl} \leq \psi_{ijl} \xi_{ijl},$& &$t_{ijl}^2 \leq \psi_{ijl} z_{ij},$\\
			$\psi^2_{ijl} \leq z_{ij} t_{ijl},$& &$\psi^2_{ijl} \leq  \xi_{ijl} t_{ijl},$\\
			\hline
		\end{tabular}
	\end{adjustbox}
	\caption{Constraints for different values of $\tau$ to represent $\ell_\tau$-norms\label{tab:norms}}
\end{table}

 \begin{table}
  \centering{
  \begin{adjustbox}{max width=0.9\textwidth}
  \begin{tabular}{cccrcrcrrrrrrrrrr}\hline
   $n$ & \texttt{type} &$p$ & \multicolumn{4}{c}{\texttt{Time (\#Unsolved)}} &  \multicolumn{2}{c}{\texttt{GAProot(\%)}} &  \multicolumn{2}{c}{\texttt{GAP(\%)}}  & \multicolumn{2}{c}{\texttt{Vars}} &  \multicolumn{2}{c}{\texttt{Nodes}}  &  \multicolumn{2}{c}{\texttt{Memory (MB)}}   \\
     \cmidrule(lr){1-3}\cmidrule(lr){4-7}\cmidrule(lr){8-9}\cmidrule(lr){10-11}\cmidrule(lr){12-13}\cmidrule(lr){14-15}\cmidrule(lr){16-17}
   &&&  \multicolumn{2}{c}{Compact} & \multicolumn{2}{c}{B\&P}&  Compact & B\&P & Compact & B\&P &Compact & B\&P&Compact & B\&P&Compact & B\&P\\
   \hline

\multirow{18}{*}{20}	&	\multirow{3}{*}{W}	&	2	&\bf	75.10	&\bf(	0	)&	966.81	&(	0	)&	100.00	&\bf	0.00	&	0.00	&	0.00	&	524	&\bf	500	&	31161	&\bf	1	&	187	&\bf	25	\\
	&		&	5	&	---	&(	5	)&\bf	332.73	&\bf(	0	)&	100.00	&\bf	0.00	&	58.11	&\bf	0.00	&	1270	&\bf	397	&	2323815	&\bf	1	&	11580	&\bf	12	\\
	&		&	10	&	---	&(	5	)&\bf	152.66	&\bf(	0	)&	100.00	&\bf	0.00	&	97.34	&\bf	0.00	&	2480	&\bf	377	&	1280023	&\bf	1	&	7438	&\bf	10	\\
\cline{2-17}	&	\multirow{3}{*}{C}	&	2	&\bf	0.70	&\bf(	0	)&	520.73	&(	4	)&	100.00	&\bf	21.46	&\bf	0.00	&	17.11	&\bf	524	&	13259	&\bf	317	&	650	&\bf	9	&	174	\\
	&		&	5	&\bf	57.47	&\bf(	0	)&	74.10	&(	4	)&	100.00	&\bf	34.21	&\bf	0.00	&	33.65	&\bf	1270	&	1333	&	15338	&\bf	302	&	44	&\bf	13	\\
	&		&	10	&\bf	3863.55	&\bf(	1	)&	1270.00	&(	2	)&	100.00	&\bf	18.56	&\bf	2.45	&	17.81	&	2480	&\bf	1638	&	1164203	&\bf	1936	&	1159	&\bf	15	\\
\cline{2-17}	&	\multirow{3}{*}{K}	&	2	&\bf	6.23	&\bf(	0	)&	1227.76	&(	2	)&	100.00	&\bf	6.46	&\bf	0.00	&	2.58	&\bf	524	&	7247	&	3052	&\bf	197	&\bf	20	&	212	\\
	&		&	5	&	---	&(	5	)&\bf	824.52	&\bf(	3	)&	100.00	&\bf	10.90	&	27.82	&\bf	6.21	&\bf	1270	&	2719	&	3205611	&\bf	1120	&	7563	&\bf	57	\\
	&		&	10	&	---	&(	5	)&\bf	2409.37	&\bf(	3	)&	100.00	&\bf	17.66	&	96.76	&\bf	8.78	&	2480	&\bf	1235	&	1093174	&\bf	3003	&	8496	&\bf	24	\\
\cline{2-17}	&	\multirow{3}{*}{D}	&	2	&\bf	63.65	&\bf(	0	)&	724.11	&(	0	)&	100.00	&\bf	0.00	&	0.00	&	0.00	&	524	&\bf	437	&	31224	&\bf	1	&	153	&\bf	22	\\
	&		&	5	&	---	&(	5	)&\bf	284.21	&\bf(	0	)&	100.00	&\bf	0.00	&	65.95	&\bf	0.00	&	1270	&\bf	383	&	2204675	&\bf	1	&	12441	&\bf	11	\\
	&		&	10	&	---	&(	5	)&\bf	163.89	&\bf(	0	)&	100.00	&\bf	0.08	&	95.40	&\bf	0.00	&	2480	&\bf	370	&	1140802	&\bf	3	&	9200	&\bf	9	\\
\cline{2-17}	&	\multirow{3}{*}{S}	&	2	&\bf	58.41	&\bf(	0	)&	1040.72	&(	0	)&	100.00	&\bf	0.00	&	0.00	&	0.00	&	524	&\bf	519	&	25556	&\bf	1	&	155	&\bf	25	\\
	&		&	5	&	---	&(	5	)&\bf	423.48	&\bf(	0	)&	100.00	&\bf	0.11	&	59.03	&\bf	0.00	&	1270	&\bf	397	&	2398328	&\bf	3	&	12161	&\bf	12	\\
	&		&	10	&	---	&(	5	)&\bf	175.32	&\bf(	0	)&	100.00	&\bf	0.34	&	98.74	&\bf	0.00	&	2480	&\bf	383	&	878394	&\bf	4	&	8683	&\bf	10	\\
\cline{2-17}	&	\multirow{3}{*}{A}	&	2	&\bf	16.20	&\bf(	0	)&	2802.48	&(	1	)&	100.00	&\bf	3.44	&\bf	0.00	&	0.35	&\bf	524	&	2987	&	5372	&\bf	54	&\bf	37	&	146	\\
	&		&	5	&	---	&(	5	)&\bf	2131.87	&\bf(	2	)&	100.00	&\bf	10.91	&	47.25	&\bf	3.22	&\bf	1270	&	2548	&	1996233	&\bf	413	&	8655	&\bf	69	\\
	&		&	10	&	---	&(	5	)&\bf	3440.13	&\bf(	1	)&	100.00	&\bf	11.84	&	99.32	&\bf	2.01	&	2480	&\bf	1288	&	705456	&\bf	2001	&	7031	&\bf	30	\\
\cline{1-17}\multirow{18}{*}{30}	&	\multirow{3}{*}{W}	&	2	&\bf	3007.98	&\bf(	0	)&	854.86	&(	4	)&	86.97	&\bf	19.38	&\bf	0.00	&	19.38	&\bf	784	&	852	&	1101778	&\bf	1	&	2823	&\bf	71	\\
	&		&	5	&	---	&(	5	)&\bf	3389.46	&\bf(	3	)&	87.13	&\bf	22.60	&	79.53	&\bf	22.60	&	1900	&\bf	845	&	744808	&\bf	1	&	12424	&\bf	47	\\
	&		&	10	&	---	&(	5	)&\bf	2960.90	&\bf(	0	)&	89.05	&\bf	0.00	&	88.07	&\bf	0.00	&	3710	&\bf	773	&	341608	&\bf	2	&	3070	&\bf	32	\\
\cline{2-17}	&	\multirow{3}{*}{C}	&	2	&\bf	1.27	&\bf(	0	)&	110.41	&(	4	)&	81.07	&\bf	30.05	&\bf	0.00	&	26.62	&\bf	784	&	10754	&	416	&\bf	190	&\bf	15	&	137	\\
	&		&	5	&\bf	311.13	&\bf(	0	)&	---	&(	5	)&	81.71	&\bf	45.91	&\bf	0.00	&	44.26	&\bf	1900	&	3340	&	57541	&\bf	200	&	116	&\bf	33	\\
	&		&	10	&\bf	4.95	&\bf(	4	)&	---	&(	5	)&	81.93	&\bf	45.96	&	75.89	&\bf	45.96	&	3710	&\bf	1758	&	407909	&\bf	231	&	1166	&\bf	17	\\
\cline{2-17}	&	\multirow{3}{*}{K}	&	2	&\bf	77.50	&\bf(	0	)&	19.84	&(	4	)&	85.63	&\bf	39.16	&\bf	0.00	&	38.71	&\bf	784	&	1708	&	28803	&\bf	5	&	115	&\bf	80	\\
	&		&	5	&	---	&(	5	)&\bf	20.48	&\bf(	4	)&	85.80	&\bf	18.37	&	66.83	&\bf	17.99	&\bf	1900	&	1986	&	956059	&\bf	19	&	11102	&\bf	61	\\
	&		&	10	&	---	&(	5	)&\bf	194.29	&\bf(	4	)&	86.29	&\bf	22.05	&	84.71	&\bf	21.13	&	3710	&\bf	1574	&	390091	&\bf	234	&	2707	&\bf	42	\\
\cline{2-17}	&	\multirow{3}{*}{D}	&	2	&\bf	2441.14	&\bf(	1	)&	300.79	&(	4	)&	86.55	&\bf	22.86	&\bf	2.74	&	22.86	&\bf	784	&	918	&	1208553	&\bf	1	&	3490	&\bf	77	\\
	&		&	5	&	---	&(	5	)&\bf	4831.03	&\bf(	2	)&	86.99	&\bf	24.04	&	74.25	&\bf	24.04	&	1900	&\bf	841	&	798302	&\bf	3	&	10071	&\bf	48	\\
	&		&	10	&	---	&(	5	)&\bf	2546.68	&\bf(	0	)&	87.46	&\bf	0.05	&	86.29	&\bf	0.00	&	3710	&\bf	774	&	343890	&\bf	2	&	4196	&\bf	32	\\
\cline{2-17}	&	\multirow{3}{*}{S}	&	2	&\bf	2363.80	&\bf(	0	)&	470.41	&(	4	)&	86.86	&\bf	32.35	&\bf	0.00	&	32.35	&\bf	784	&	895	&	822524	&\bf	1	&	2187	&\bf	72	\\
	&		&	5	&	---	&(	5	)&\bf	4534.78	&\bf(	2	)&	86.78	&\bf	18.98	&	76.88	&\bf	18.98	&	1900	&\bf	840	&	726885	&\bf	1	&	10244	&\bf	48	\\
	&		&	10	&	---	&(	5	)&\bf	2666.23	&\bf(	0	)&	88.10	&\bf	0.04	&	87.16	&\bf	0.00	&	3710	&\bf	776	&	410399	&\bf	5	&	4551	&\bf	31	\\
\cline{2-17}	&	\multirow{3}{*}{A}	&	2	&\bf	327.89	&\bf(	0	)&	93.44	&(	4	)&	85.31	&\bf	49.21	&\bf	0.00	&	49.18	&\bf	784	&	1259	&	70967	&\bf	2	&	365	&\bf	104	\\
	&		&	5	&	---	&(	5	)&\bf	95.22	&\bf(	4	)&	86.39	&\bf	7.72	&	70.59	&\bf	7.37	&	1900	&\bf	1346	&	626210	&\bf	9	&	8166	&\bf	74	\\
	&		&	10	&	---	&(	5	)&\bf	169.25	&\bf(	4	)&	86.14	&\bf	13.71	&	84.74	&\bf	10.99	&	3710	&\bf	1487	&	307003	&\bf	141	&	2647	&\bf	54	\\
\cline{1-17}\multirow{18}{*}{40}	&	\multirow{3}{*}{W}	&	2	&	---	&(	5	)&	---	&(	5	)&	100.00	&\bf	32.33	&	45.10	&\bf	32.33	&\bf	1044	&	1292	&	1282229	&\bf	1	&	15092	&\bf	131	\\
	&		&	5	&	---	&(	5	)&	---	&(	5	)&	100.00	&\bf	67.86	&	96.95	&\bf	67.86	&	2530	&\bf	1298	&	364182	&\bf	1	&	8105	&\bf	102	\\
	&		&	10	&	---	&(	5	)&	---	&(	5	)&	100.00	&\bf	92.83	&	100.00	&\bf	92.83	&	4940	&\bf	1339	&	165306	&\bf	1	&	2712	&\bf	83	\\
\cline{2-17}	&	\multirow{3}{*}{C}	&	2	&\bf	4.22	&\bf(	0	)&	---	&(	5	)&	100.00	&\bf	44.51	&\bf	0.00	&	44.51	&\bf	1044	&	2468	&	1043	&\bf	2	&\bf	20	&	35	\\
	&		&	5	&\bf	3879.78	&\bf(	3	)&	---	&(	5	)&	100.00	&\bf	77.56	&\bf	41.01	&	77.56	&	2530	&\bf	2525	&	606460	&\bf	2	&	4626	&\bf	27	\\
	&		&	10	&	---	&(	5	)&	---	&(	5	)&	100.00	&\bf	78.16	&	91.15	&\bf	78.16	&	4940	&\bf	2206	&	237096	&\bf	13	&	883	&\bf	22	\\
\cline{2-17}	&	\multirow{3}{*}{K}	&	2	&\bf	2889.87	&\bf(	0	)&	---	&(	5	)&	100.00	&\bf	61.39	&\bf	0.00	&	61.39	&\bf	1044	&	2226	&	614033	&\bf	1	&	2098	&\bf	125	\\
	&		&	5	&	---	&(	5	)&	---	&(	5	)&	100.00	&\bf	83.30	&	92.54	&\bf	83.30	&	2530	&\bf	2213	&	451617	&\bf	1	&	4187	&\bf	94	\\
	&		&	10	&	---	&(	5	)&	---	&(	5	)&	100.00	&\bf	75.79	&	100.00	&\bf	75.79	&	4940	&\bf	2169	&	163856	&\bf	3	&	1240	&\bf	83	\\
\cline{2-17}	&	\multirow{3}{*}{D}	&	2	&	---	&(	5	)&	---	&(	5	)&	100.00	&\bf	32.13	&	40.95	&\bf	32.13	&\bf	1044	&	1529	&	1548255	&\bf	1	&	11470	&\bf	153	\\
	&		&	5	&	---	&(	5	)&	---	&(	5	)&	100.00	&\bf	68.37	&	99.29	&\bf	68.37	&	2530	&\bf	1310	&	360994	&\bf	1	&	12324	&\bf	99	\\
	&		&	10	&	---	&(	5	)&	---	&(	5	)&	100.00	&\bf	91.94	&	100.00	&\bf	91.94	&	4940	&\bf	1357	&	141565	&\bf	1	&	4573	&\bf	79	\\
\cline{2-17}	&	\multirow{3}{*}{S}	&	2	&	---	&(	5	)&	---	&(	5	)&	100.00	&\bf	30.53	&	42.63	&\bf	30.53	&\bf	1044	&	1443	&	1109568	&\bf	1	&	14496	&\bf	145	\\
	&		&	5	&	---	&(	5	)&	---	&(	5	)&	100.00	&\bf	72.37	&	98.23	&\bf	72.37	&	2530	&\bf	1299	&	358058	&\bf	1	&	11144	&\bf	97	\\
	&		&	10	&	---	&(	5	)&	---	&(	5	)&	100.00	&\bf	90.94	&	100.00	&\bf	90.94	&	4940	&\bf	1362	&	176603	&\bf	1	&	2825	&\bf	76	\\
\cline{2-17}	&	\multirow{3}{*}{A}	&	2	&\bf	2886.85	&\bf(	4	)&	---	&(	5	)&	100.00	&\bf	61.20	&\bf	14.49	&	61.20	&\bf	1044	&	1876	&	1127650	&\bf	1	&	3383	&\bf	212	\\
	&		&	5	&	---	&(	5	)&	---	&(	5	)&	100.00	&\bf	82.57	&	93.62	&\bf	82.57	&	2530	&\bf	1656	&	402572	&\bf	1	&	4383	&\bf	134	\\
	&		&	10	&	---	&(	5	)&	---	&(	5	)&	100.00	&\bf	75.96	&	100.00	&\bf	75.96	&	4940	&\bf	1782	&	145643	&\bf	1	&	2437	&\bf	114	\\
\cline{1-17}\multirow{18}{*}{45}	&	\multirow{3}{*}{W}	&	2	&	---	&(	5	)&	---	&(	5	)&	100.00	&\bf	34.65	&	46.87	&\bf	34.65	&\bf	1174	&	1462	&	974796	&\bf	1	&	11024	&\bf	145	\\
	&		&	5	&	---	&(	5	)&	---	&(	5	)&	100.00	&\bf	83.70	&	97.65	&\bf	83.70	&	2845	&\bf	1444	&	367765	&\bf	1	&	3877	&\bf	126	\\
	&		&	10	&	---	&(	5	)&	---	&(	5	)&	100.00	&	100.00	&	100.00	&	100.00	&	5555	&\bf	1522	&	122910	&\bf	1	&	1081	&\bf	101	\\
\cline{2-17}	&	\multirow{3}{*}{C}	&	2	&\bf	6.63	&\bf(	0	)&	---	&(	5	)&	100.00	&\bf	57.24	&\bf	0.00	&	57.24	&\bf	1174	&	2438	&	1160	&\bf	1	&\bf	23	&	34	\\
	&		&	5	&\bf	4337.92	&\bf(	3	)&	---	&(	5	)&	100.00	&\bf	84.59	&\bf	49.89	&	84.59	&	2845	&\bf	2696	&	467399	&\bf	1	&	1217	&\bf	33	\\
	&		&	10	&	---	&(	5	)&	---	&(	5	)&	100.00	&\bf	64.84	&	96.12	&\bf	64.84	&	5555	&\bf	2327	&	135558	&\bf	3	&	578	&\bf	24	\\
\cline{2-17}	&	\multirow{3}{*}{K}	&	2	&\bf	6688.87	&\bf(	4	)&	---	&(	5	)&	100.00	&\bf	63.44	&\bf	14.13	&	63.44	&\bf	1174	&	2443	&	1718920	&\bf	1	&	6733	&\bf	142	\\
	&		&	5	&	---	&(	5	)&	---	&(	5	)&	100.00	&\bf	71.76	&	95.10	&\bf	71.76	&	2845	&\bf	2663	&	453635	&\bf	1	&	3364	&\bf	169	\\
	&		&	10	&	---	&(	5	)&	---	&(	5	)&	100.00	&\bf	91.29	&	99.70	&\bf	91.29	&	5555	&\bf	2231	&	158192	&\bf	1	&	781	&\bf	97	\\
\cline{2-17}	&	\multirow{3}{*}{D}	&	2	&	---	&(	5	)&	---	&(	5	)&	100.00	&\bf	30.83	&	48.64	&\bf	30.83	&\bf	1174	&	1508	&	933735	&\bf	1	&	10637	&\bf	153	\\
	&		&	5	&	---	&(	5	)&	---	&(	5	)&	100.00	&\bf	91.29	&	98.36	&\bf	91.29	&	2845	&\bf	1470	&	374986	&\bf	1	&	5117	&\bf	129	\\
	&		&	10	&	---	&(	5	)&	---	&(	5	)&	100.00	&	100.00	&	100.00	&	100.00	&	5555	&\bf	1528	&	159745	&\bf	1	&	1184	&\bf	106	\\
\cline{2-17}	&	\multirow{3}{*}{S}	&	2	&	---	&(	5	)&	---	&(	5	)&	100.00	&\bf	31.76	&	45.32	&\bf	31.76	&\bf	1174	&	1556	&	1059244	&\bf	1	&	8872	&\bf	160	\\
	&		&	5	&	---	&(	5	)&	---	&(	5	)&	100.00	&\bf	86.56	&	99.18	&\bf	86.56	&	2845	&\bf	1477	&	452467	&\bf	1	&	4738	&\bf	125	\\
	&		&	10	&	---	&(	5	)&	---	&(	5	)&	100.00	&	100.00	&	100.00	&	100.00	&	5555	&\bf	1522	&	131588	&\bf	1	&	976	&\bf	100	\\
\cline{2-17}	&	\multirow{3}{*}{A}	&	2	&	---	&(	5	)&	---	&(	5	)&	100.00	&\bf	60.42	&\bf	25.43	&	60.42	&\bf	1174	&	1992	&	861937	&\bf	1	&	4928	&\bf	221	\\
	&		&	5	&	---	&(	5	)&	---	&(	5	)&	100.00	&\bf	84.00	&	97.37	&\bf	84.00	&	2845	&\bf	1822	&	297805	&\bf	1	&	4261	&\bf	172	\\
	&		&	10	&	---	&(	5	)&	---	&(	5	)&	100.00	&\bf	93.77	&	100.00	&\bf	93.77	&	5555	&\bf	2008	&	141643	&\bf	1	&	845	&\bf	175	\\
\cline{1-17}\multirow{18}{*}{50}	&	\multirow{3}{*}{W}	&	2	&	---	&(	1	)&	---	&(	1	)&	100.00	&\bf	36.39	&	57.48	&\bf	36.39	&\bf	1304	&	1749	&	759779	&\bf	1	&	11084	&\bf	186	\\
	&		&	5	&	---	&(	1	)&	---	&(	1	)&	100.00	&\bf	88.42	&	97.49	&\bf	88.42	&	3160	&\bf	1659	&	270228	&\bf	1	&	3125	&\bf	135	\\
	&		&	10	&	---	&(	1	)&	---	&(	1	)&	100.00	&	100.00	&	100.00	&	100.00	&	6170	&\bf	1777	&	122705	&\bf	1	&	1843	&\bf	127	\\
\cline{2-17}	&	\multirow{3}{*}{C}	&	2	&\bf	6.06	&\bf(	0	)&	---	&(	1	)&	100.00	&\bf	57.05	&\bf	0.00	&	57.05	&\bf	1304	&	2813	&	879	&\bf	1	&\bf	28	&	36	\\
	&		&	5	&	---	&(	1	)&	---	&(	1	)&	100.00	&\bf	94.44	&\bf	86.11	&	94.44	&	3160	&\bf	2767	&	489141	&\bf	1	&	1274	&\bf	41	\\
	&		&	10	&	---	&(	1	)&	---	&(	1	)&	100.00	&\bf	66.91	&	100.00	&\bf	66.91	&	6170	&\bf	2678	&	117338	&\bf	2	&	535	&\bf	32	\\
\cline{2-17}	&	\multirow{3}{*}{K}	&	2	&	---	&(	1	)&	---	&(	1	)&	100.00	&\bf	66.85	&\bf	22.51	&	66.85	&\bf	1304	&	2554	&	1624297	&\bf	1	&	9722	&\bf	141	\\
	&		&	5	&	---	&(	1	)&	---	&(	1	)&	100.00	&\bf	95.48	&\bf	92.64	&	95.48	&	3160	&\bf	2458	&	501803	&\bf	1	&	1496	&\bf	132	\\
	&		&	10	&	---	&(	1	)&	---	&(	1	)&	100.00	&\bf	99.57	&	100.00	&\bf	99.57	&	6170	&\bf	2418	&	128940	&\bf	1	&	669	&\bf	115	\\
\cline{2-17}	&	\multirow{3}{*}{D}	&	2	&	---	&(	1	)&	---	&(	1	)&	100.00	&\bf	31.81	&	51.21	&\bf	31.81	&\bf	1304	&	1704	&	1059488	&\bf	1	&	10438	&\bf	182	\\
	&		&	5	&	---	&(	1	)&	---	&(	1	)&	100.00	&\bf	91.53	&	97.42	&\bf	91.53	&	3160	&\bf	1671	&	405884	&\bf	1	&	1222	&\bf	156	\\
	&		&	10	&	---	&(	1	)&	---	&(	1	)&	100.00	&	100.00	&	100.00	&	100.00	&	6170	&\bf	1710	&	156712	&\bf	1	&	1360	&\bf	127	\\
\cline{2-17}	&	\multirow{3}{*}{S}	&	2	&	---	&(	1	)&	---	&(	1	)&	100.00	&\bf	23.29	&	56.18	&\bf	23.29	&\bf	1304	&	2040	&	757576	&\bf	1	&	10234	&\bf	215	\\
	&		&	5	&	---	&(	1	)&	---	&(	1	)&	100.00	&\bf	46.71	&	98.40	&\bf	46.71	&	3160	&\bf	1772	&	404250	&\bf	1	&	2374	&\bf	166	\\
	&		&	10	&	---	&(	1	)&	---	&(	1	)&	100.00	&	100.00	&	100.00	&	100.00	&	6170	&\bf	1739	&	144453	&\bf	1	&	764	&\bf	131	\\
\cline{2-17}	&	\multirow{3}{*}{A}	&	2	&	---	&(	1	)&	---	&(	1	)&	100.00	&\bf	58.75	&\bf	40.71	&	58.75	&\bf	1304	&	2006	&	519352	&\bf	1	&	6759	&\bf	215	\\
	&		&	5	&	---	&(	1	)&	---	&(	1	)&	100.00	&\bf	86.23	&	95.15	&\bf	86.23	&	3160	&\bf	2175	&	396890	&\bf	1	&	2119	&\bf	208	\\
	&		&	10	&	---	&(	1	)&	---	&(	1	)&	100.00	&	100.00	&	100.00	&	100.00	&	6170	&\bf	2127	&	127758	&\bf	1	&	672	&\bf	183	\\
\cline{1-17}\multicolumn{3}{c}{\bf Total Average:} 					&	1016.47	&(	282	)&\bf	1438.77	&\bf(	277	)&	96.64	&\bf	44.54	&	59.19	&\bf	43.82	&	2451	&\bf	1902	&	628495	&\bf	143	&	4813	&\bf	85	\\

\hline
\end{tabular}%
\end{adjustbox}
   \caption{Results for \cite{EWC74} instances for $\ell_{\frac{3}{2}}$-norm \label{tab:EilonL32}}}
\end{table}%

 \begin{table}
  \centering{
  \begin{adjustbox}{max width=0.9\textwidth}
  \begin{tabular}{cccrcrcrrrrrrrrrr}\hline
   $n$ & \texttt{type} &$p$ & \multicolumn{4}{c}{\texttt{Time (\#Unsolved)}} &  \multicolumn{2}{c}{\texttt{GAProot(\%)}} &  \multicolumn{2}{c}{\texttt{GAP(\%)}}  & \multicolumn{2}{c}{\texttt{Vars}} &  \multicolumn{2}{c}{\texttt{Nodes}}  &  \multicolumn{2}{c}{\texttt{Memory (MB)}}   \\
     \cmidrule(lr){1-3}\cmidrule(lr){4-7}\cmidrule(lr){8-9}\cmidrule(lr){10-11}\cmidrule(lr){12-13}\cmidrule(lr){14-15}\cmidrule(lr){16-17}
   &&&  \multicolumn{2}{c}{Compact} & \multicolumn{2}{c}{B\&P}&  Compact & B\&P & Compact & B\&P &Compact & B\&P&Compact & B\&P&Compact & B\&P\\
   \hline

\multirow{18}{*}{20}	&	\multirow{3}{*}{W}	&	2	&	26.03	&(	0	)&\bf	24.22	&\bf(	0	)&	100.00	&\bf	0.00	&	0.00	&	0.00	&\bf	204	&	2385	&	50632	&\bf	1	&\bf	41	&	120	\\
	&		&	5	&	---	&(	5	)&\bf	20.33	&\bf(	0	)&	100.00	&\bf	0.00	&	52.53	&\bf	0.00	&\bf	470	&	2504	&	8992191	&\bf	1	&	15633	&\bf	53	\\
	&		&	10	&	---	&(	5	)&\bf	9.15	&\bf(	0	)&	100.00	&\bf	0.00	&	89.75	&\bf	0.00	&\bf	880	&	2178	&	5306625	&\bf	1	&	13982	&\bf	28	\\
\cline{2-17}	&	\multirow{3}{*}{C}	&	2	&\bf	0.16	&\bf(	0	)&	757.86	&(	4	)&	100.00	&\bf	18.45	&\bf	0.00	&	10.76	&\bf	204	&	45528	&\bf	119	&	13798	&\bf	4	&	962	\\
	&		&	5	&\bf	16.30	&\bf(	0	)&	---	&(	5	)&	100.00	&\bf	32.78	&\bf	0.00	&	21.37	&\bf	470	&	7002	&	13102	&\bf	7867	&\bf	31	&	148	\\
	&		&	10	&\bf	983.51	&\bf(	0	)&	2223.12	&(	3	)&	100.00	&\bf	27.39	&\bf	0.00	&	10.97	&\bf	880	&	3717	&	650055	&\bf	9690	&	1569	&\bf	75	\\
\cline{2-17}	&	\multirow{3}{*}{K}	&	2	&\bf	5.03	&\bf(	0	)&	1648.82	&(	1	)&	100.00	&\bf	7.02	&\bf	0.00	&	1.49	&\bf	204	&	11126	&	6787	&\bf	349	&\bf	11	&	301	\\
	&		&	5	&	4231.23	&(	4	)&\bf	2850.11	&\bf(	2	)&	100.00	&\bf	13.04	&	23.89	&\bf	2.37	&\bf	470	&	5627	&	8109266	&\bf	1830	&	7136	&\bf	117	\\
	&		&	10	&	---	&(	5	)&\bf	1822.27	&\bf(	2	)&	100.00	&\bf	16.01	&	85.45	&\bf	6.61	&\bf	880	&	2925	&	4227203	&\bf	4287	&	11279	&\bf	68	\\
\cline{2-17}	&	\multirow{3}{*}{D}	&	2	&	25.62	&(	0	)&\bf	20.60	&\bf(	0	)&	100.00	&\bf	0.00	&	0.00	&	0.00	&\bf	204	&	2365	&	45501	&\bf	1	&\bf	37	&	119	\\
	&		&	5	&	---	&(	5	)&\bf	15.53	&\bf(	0	)&	100.00	&\bf	0.00	&	50.45	&\bf	0.00	&\bf	470	&	2495	&	9156476	&\bf	1	&	11659	&\bf	52	\\
	&		&	10	&	---	&(	5	)&\bf	17.64	&\bf(	0	)&	100.00	&\bf	0.05	&	87.20	&\bf	0.00	&\bf	880	&	2175	&	5927899	&\bf	2	&	15624	&\bf	28	\\
\cline{2-17}	&	\multirow{3}{*}{S}	&	2	&\bf	25.31	&\bf(	0	)&	28.86	&(	0	)&	100.00	&\bf	0.00	&\bf	0.00	&	0.00	&\bf	204	&	2445	&	42178	&\bf	1	&\bf	37	&	126	\\
	&		&	5	&	---	&(	5	)&\bf	49.89	&\bf(	0	)&	100.00	&\bf	0.03	&	49.72	&\bf	0.00	&\bf	470	&	2491	&	9117840	&\bf	3	&	12277	&\bf	53	\\
	&		&	10	&	---	&(	5	)&\bf	24.45	&\bf(	0	)&	100.00	&\bf	0.25	&	87.53	&\bf	0.00	&\bf	880	&	2183	&	5399238	&\bf	4	&	15145	&\bf	29	\\
\cline{2-17}	&	\multirow{3}{*}{A}	&	2	&\bf	15.72	&\bf(	0	)&	1663.90	&(	1	)&	100.00	&\bf	3.21	&\bf	0.00	&	0.17	&\bf	204	&	6400	&	13975	&\bf	133	&\bf	19	&	288	\\
	&		&	5	&	---	&(	5	)&\bf	1603.15	&\bf(	2	)&	100.00	&\bf	9.93	&	36.09	&\bf	1.74	&\bf	470	&	5066	&	6284524	&\bf	703	&	9720	&\bf	138	\\
	&		&	10	&	---	&(	5	)&\bf	2420.97	&\bf(	1	)&	100.00	&\bf	15.86	&	87.63	&\bf	0.94	&\bf	880	&	2987	&	3478065	&\bf	2125	&	11095	&\bf	60	\\
\cline{1-17}\multirow{18}{*}{30}	&	\multirow{3}{*}{W}	&	2	&	1214.67	&(	0	)&\bf	264.29	&\bf(	0	)&	86.79	&\bf	0.00	&	0.00	&	0.00	&\bf	304	&	3787	&	1814638	&\bf	1	&	559	&\bf	339	\\
	&		&	5	&	---	&(	5	)&\bf	80.27	&\bf(	0	)&	86.91	&\bf	0.00	&	72.61	&\bf	0.00	&\bf	700	&	2914	&	3104800	&\bf	1	&	12866	&\bf	107	\\
	&		&	10	&	---	&(	5	)&\bf	34.14	&\bf(	0	)&	87.86	&\bf	0.00	&	84.94	&\bf	0.00	&\bf	1310	&	2474	&	1626671	&\bf	1	&	18935	&\bf	54	\\
\cline{2-17}	&	\multirow{3}{*}{C}	&	2	&\bf	0.43	&\bf(	0	)&	104.99	&(	4	)&	81.07	&\bf	14.80	&\bf	0.00	&	11.58	&\bf	304	&	45929	&\bf	422	&	1794	&\bf	6	&	643	\\
	&		&	5	&\bf	76.89	&\bf(	0	)&	503.59	&(	4	)&	81.71	&\bf	27.87	&\bf	0.00	&	20.17	&\bf	700	&	8380	&	40315	&\bf	1409	&	165	&\bf	100	\\
	&		&	10	&\bf	1.89	&\bf(	4	)&	---	&(	5	)&	81.93	&\bf	38.05	&	41.89	&\bf	33.92	&\bf	1310	&	4996	&	2709802	&\bf	2978	&	4156	&\bf	69	\\
\cline{2-17}	&	\multirow{3}{*}{K}	&	2	&\bf	54.31	&\bf(	0	)&	744.55	&(	4	)&	85.50	&\bf	6.60	&\bf	0.00	&	6.45	&\bf	304	&	7743	&	62564	&\bf	44	&\bf	58	&	368	\\
	&		&	5	&	---	&(	5	)&\bf	5040.71	&\bf(	4	)&	86.00	&\bf	11.54	&	67.36	&\bf	8.59	&\bf	700	&	5932	&	2838928	&\bf	295	&	15993	&\bf	170	\\
	&		&	10	&	---	&(	5	)&	---	&(	5	)&	87.68	&\bf	17.84	&	83.29	&\bf	13.27	&\bf	1310	&	4040	&	1799130	&\bf	1270	&	12831	&\bf	88	\\
\cline{2-17}	&	\multirow{3}{*}{D}	&	2	&	1247.78	&(	0	)&\bf	222.14	&\bf(	0	)&	86.38	&\bf	0.00	&	0.00	&	0.00	&\bf	304	&	3810	&	1731910	&\bf	1	&	529	&\bf	335	\\
	&		&	5	&	---	&(	5	)&\bf	139.30	&\bf(	0	)&	86.94	&\bf	0.00	&	73.46	&\bf	0.00	&\bf	700	&	2939	&	2824067	&\bf	1	&	12906	&\bf	108	\\
	&		&	10	&	---	&(	5	)&\bf	38.85	&\bf(	0	)&	87.51	&\bf	0.00	&	85.28	&\bf	0.00	&\bf	1310	&	2480	&	1519282	&\bf	1	&	16694	&\bf	55	\\
\cline{2-17}	&	\multirow{3}{*}{S}	&	2	&	1271.98	&(	0	)&\bf	186.78	&\bf(	0	)&	86.69	&\bf	0.00	&\bf	0.00	&	0.00	&\bf	304	&	3699	&	1631950	&\bf	1	&	518	&\bf	324	\\
	&		&	5	&	---	&(	5	)&\bf	540.52	&\bf(	0	)&	87.12	&\bf	0.02	&	73.42	&\bf	0.00	&\bf	700	&	2905	&	3042832	&\bf	3	&	13666	&\bf	109	\\
	&		&	10	&	---	&(	5	)&\bf	94.57	&\bf(	0	)&	87.66	&\bf	0.08	&	84.50	&\bf	0.00	&\bf	1310	&	2465	&	1569091	&\bf	2	&	13710	&\bf	55	\\
\cline{2-17}	&	\multirow{3}{*}{A}	&	2	&\bf	220.34	&\bf(	0	)&	3254.28	&(	3	)&	85.20	&\bf	2.15	&\bf	0.00	&	1.69	&\bf	304	&	4997	&	136052	&\bf	16	&\bf	110	&	403	\\
	&		&	5	&	---	&(	5	)&\bf	1745.13	&\bf(	4	)&	86.59	&\bf	5.37	&	71.23	&\bf	3.06	&\bf	700	&	4216	&	1909608	&\bf	51	&	10892	&\bf	174	\\
	&		&	10	&	---	&(	5	)&\bf	1207.09	&\bf(	4	)&	86.97	&\bf	11.27	&	83.28	&\bf	6.38	&\bf	1310	&	3735	&	1103129	&\bf	465	&	10680	&\bf	106	\\
\cline{1-17}\multirow{18}{*}{40}	&	\multirow{3}{*}{W}	&	2	&	---	&(	5	)&\bf	252.49	&\bf(	0	)&	100.00	&\bf	0.00	&	31.03	&\bf	0.00	&\bf	404	&	6365	&	7312331	&\bf	1	&	5864	&\bf	817	\\
	&		&	5	&	---	&(	5	)&\bf	364.88	&\bf(	0	)&	100.00	&\bf	0.00	&	94.01	&\bf	0.00	&\bf	930	&	4168	&	1925927	&\bf	1	&	11549	&\bf	234	\\
	&		&	10	&	---	&(	5	)&\bf	355.67	&\bf(	0	)&	100.00	&\bf	0.00	&	99.63	&\bf	0.00	&\bf	1740	&	3922	&	862095	&\bf	1	&	8294	&\bf	120	\\
\cline{2-17}	&	\multirow{3}{*}{C}	&	2	&\bf	1.55	&\bf(	0	)&	---	&(	5	)&	100.00	&\bf	23.26	&\bf	0.00	&	22.73	&\bf	404	&	29032	&	1164	&\bf	419	&\bf	9	&	448	\\
	&		&	5	&\bf	594.78	&\bf(	0	)&	---	&(	5	)&	100.00	&\bf	36.17	&\bf	0.00	&	35.69	&\bf	930	&	5983	&	271138	&\bf	92	&	862	&\bf	57	\\
	&		&	10	&\bf	4207.08	&\bf(	1	)&	---	&(	5	)&	100.00	&\bf	37.29	&\bf	11.40	&	35.81	&\bf	1740	&	4780	&	1357589	&\bf	266	&	5116	&\bf	40	\\
\cline{2-17}	&	\multirow{3}{*}{K}	&	2	&\bf	719.29	&\bf(	0	)&	---	&(	5	)&	100.00	&\bf	8.69	&\bf	0.00	&	8.69	&\bf	404	&	5651	&	701127	&\bf	4	&	429	&\bf	412	\\
	&		&	5	&	---	&(	5	)&	---	&(	5	)&	100.00	&\bf	16.32	&	89.79	&\bf	16.31	&\bf	930	&	4724	&	1771444	&\bf	4	&	11673	&\bf	171	\\
	&		&	10	&	---	&(	5	)&	---	&(	5	)&	100.00	&\bf	16.50	&	99.43	&\bf	16.47	&\bf	1740	&	4836	&	700210	&\bf	44	&	13117	&\bf	100	\\
\cline{2-17}	&	\multirow{3}{*}{D}	&	2	&	---	&(	5	)&\bf	259.07	&\bf(	0	)&	100.00	&\bf	0.00	&	30.59	&\bf	0.00	&\bf	404	&	6694	&	6885185	&\bf	1	&	7289	&\bf	865	\\
	&		&	5	&	---	&(	5	)&\bf	485.52	&\bf(	0	)&	100.00	&\bf	0.00	&	94.27	&\bf	0.00	&\bf	930	&	4240	&	2040736	&\bf	1	&	9260	&\bf	238	\\
	&		&	10	&	---	&(	5	)&\bf	473.11	&\bf(	0	)&	100.00	&\bf	0.01	&	99.90	&\bf	0.00	&\bf	1740	&	3891	&	692838	&\bf	1	&	10352	&\bf	119	\\
\cline{2-17}	&	\multirow{3}{*}{S}	&	2	&	---	&(	5	)&\bf	691.25	&\bf(	0	)&	100.00	&\bf	0.02	&	29.89	&\bf	0.00	&\bf	404	&	6224	&	6075474	&\bf	1	&	5159	&\bf	806	\\
	&		&	5	&	---	&(	5	)&\bf	244.29	&\bf(	0	)&	100.00	&\bf	0.00	&	94.33	&\bf	0.00	&\bf	930	&	4120	&	2196020	&\bf	1	&	11994	&\bf	230	\\
	&		&	10	&	---	&(	5	)&\bf	1223.93	&\bf(	0	)&	100.00	&\bf	0.07	&	99.90	&\bf	0.00	&\bf	1740	&	3910	&	925128	&\bf	3	&	11056	&\bf	120	\\
\cline{2-17}	&	\multirow{3}{*}{A}	&	2	&\bf	3347.72	&\bf(	4	)&	---	&(	5	)&	100.00	&\bf	4.09	&	9.28	&\bf	4.09	&\bf	404	&	5760	&	2823594	&\bf	2	&	1885	&\bf	763	\\
	&		&	5	&	---	&(	5	)&	---	&(	5	)&	100.00	&\bf	9.56	&	91.43	&\bf	9.08	&\bf	930	&	4410	&	1346465	&\bf	5	&	6365	&\bf	258	\\
	&		&	10	&	---	&(	5	)&	---	&(	5	)&	100.00	&\bf	9.54	&	99.86	&\bf	8.60	&\bf	1740	&	4404	&	677970	&\bf	41	&	9029	&\bf	141	\\
\cline{1-17}\multirow{18}{*}{45}	&	\multirow{3}{*}{W}	&	2	&	---	&(	5	)&\bf	388.20	&\bf(	0	)&	100.00	&\bf	0.00	&	41.73	&\bf	0.00	&\bf	454	&	9960	&	4536228	&\bf	1	&	6272	&\bf	1570	\\
	&		&	5	&	---	&(	5	)&\bf	207.18	&\bf(	0	)&	100.00	&\bf	0.00	&	96.23	&\bf	0.00	&\bf	1045	&	4741	&	1544401	&\bf	1	&	10363	&\bf	360	\\
	&		&	10	&	---	&(	5	)&\bf	282.84	&\bf(	0	)&	100.00	&\bf	0.00	&	100.00	&\bf	0.00	&\bf	1955	&	4318	&	494940	&\bf	1	&	10023	&\bf	172	\\
\cline{2-17}	&	\multirow{3}{*}{C}	&	2	&\bf	2.49	&\bf(	0	)&	---	&(	5	)&	100.00	&\bf	22.33	&\bf	0.00	&	22.32	&\bf	454	&	4517	&	1618	&\bf	6	&\bf	13	&	67	\\
	&		&	5	&\bf	398.50	&\bf(	0	)&	---	&(	5	)&	100.00	&\bf	35.75	&\bf	0.00	&	35.62	&\bf	1045	&	4850	&	162356	&\bf	10	&	453	&\bf	46	\\
	&		&	10	&	---	&(	5	)&	---	&(	5	)&	100.00	&\bf	35.00	&	76.29	&\bf	34.76	&\bf	1955	&	4710	&	1628650	&\bf	35	&	5534	&\bf	36	\\
\cline{2-17}	&	\multirow{3}{*}{K}	&	2	&\bf	5287.03	&\bf(	2	)&	---	&(	5	)&	100.00	&\bf	12.13	&\bf	5.66	&	12.12	&\bf	454	&	6140	&	4935475	&\bf	3	&	2537	&\bf	535	\\
	&		&	5	&	---	&(	5	)&	---	&(	5	)&	100.00	&\bf	17.38	&	93.84	&\bf	17.36	&\bf	1045	&	5116	&	2086094	&\bf	4	&	9338	&\bf	226	\\
	&		&	10	&	---	&(	5	)&	---	&(	5	)&	100.00	&\bf	14.67	&	100.00	&\bf	14.67	&\bf	1955	&	4781	&	618710	&\bf	5	&	11268	&\bf	120	\\
\cline{2-17}	&	\multirow{3}{*}{D}	&	2	&	---	&(	5	)&\bf	358.37	&\bf(	0	)&	100.00	&\bf	0.00	&	40.50	&\bf	0.00	&\bf	454	&	9220	&	4961123	&\bf	1	&	5740	&\bf	1483	\\
	&		&	5	&	---	&(	5	)&\bf	207.01	&\bf(	0	)&	100.00	&\bf	0.00	&	96.12	&\bf	0.00	&\bf	1045	&	4756	&	1837767	&\bf	1	&	8596	&\bf	363	\\
	&		&	10	&	---	&(	5	)&\bf	483.48	&\bf(	0	)&	100.00	&\bf	0.00	&	100.00	&\bf	0.00	&\bf	1955	&	4310	&	486390	&\bf	1	&	9703	&\bf	172	\\
\cline{2-17}	&	\multirow{3}{*}{S}	&	2	&	---	&(	5	)&\bf	370.88	&\bf(	0	)&	100.00	&\bf	0.00	&	40.04	&\bf	0.00	&\bf	454	&	9566	&	4705056	&\bf	1	&	5430	&\bf	1507	\\
	&		&	5	&	---	&(	5	)&\bf	1332.07	&\bf(	0	)&	100.00	&\bf	0.20	&	95.75	&\bf	0.00	&\bf	1045	&	4809	&	2080272	&\bf	4	&	10194	&\bf	370	\\
	&		&	10	&	---	&(	5	)&\bf	1487.71	&\bf(	0	)&	100.00	&\bf	0.02	&	99.90	&\bf	0.00	&\bf	1955	&	4315	&	659990	&\bf	3	&	10857	&\bf	173	\\
\cline{2-17}	&	\multirow{3}{*}{A}	&	2	&	---	&(	5	)&	---	&(	5	)&	100.00	&\bf	6.72	&	25.73	&\bf	6.26	&\bf	454	&	6957	&	2069919	&\bf	4	&	3364	&\bf	1078	\\
	&		&	5	&	---	&(	5	)&	---	&(	5	)&	100.00	&\bf	11.97	&	91.39	&\bf	11.12	&\bf	1045	&	4895	&	1615563	&\bf	6	&	3655	&\bf	373	\\
	&		&	10	&	---	&(	5	)&	---	&(	5	)&	100.00	&\bf	7.59	&	100.00	&\bf	7.46	&\bf	1955	&	4486	&	483721	&\bf	6	&	8786	&\bf	172	\\
\cline{1-17}\multirow{18}{*}{50}	&	\multirow{3}{*}{W}	&	2	&	---	&(	1	)&\bf	456.01	&\bf(	0	)&	100.00	&\bf	0.00	&	48.21	&\bf	0.00	&\bf	504	&	10414	&	4777385	&\bf	1	&	6240	&\bf	2007	\\
	&		&	5	&	---	&(	1	)&\bf	615.74	&\bf(	0	)&	100.00	&\bf	0.03	&	96.50	&\bf	0.00	&\bf	1160	&	5458	&	1930033	&\bf	3	&	6233	&\bf	470	\\
	&		&	10	&	---	&(	1	)&\bf	204.96	&\bf(	0	)&	100.00	&\bf	0.00	&	100.00	&\bf	0.00	&\bf	2170	&	4971	&	355938	&\bf	1	&	7946	&\bf	232	\\
\cline{2-17}	&	\multirow{3}{*}{C}	&	2	&\bf	4.78	&\bf(	0	)&	---	&(	1	)&	100.00	&\bf	22.61	&\bf	0.00	&	22.61	&\bf	504	&	4342	&	2935	&\bf	2	&\bf	17	&	59	\\
	&		&	5	&	---	&(	1	)&	---	&(	1	)&	100.00	&\bf	34.42	&	67.17	&\bf	34.37	&\bf	1160	&	5142	&	2940681	&\bf	4	&	5728	&\bf	51	\\
	&		&	10	&	---	&(	1	)&	---	&(	1	)&	100.00	&\bf	40.27	&	83.64	&\bf	40.21	&\bf	2170	&	5198	&	1333733	&\bf	18	&	3970	&\bf	41	\\
\cline{2-17}	&	\multirow{3}{*}{K}	&	2	&	---	&(	1	)&	---	&(	1	)&	100.00	&\bf	11.24	&	18.49	&\bf	11.24	&\bf	504	&	6665	&	4998109	&\bf	3	&	3635	&\bf	673	\\
	&		&	5	&	---	&(	1	)&	---	&(	1	)&	100.00	&\bf	17.43	&	91.41	&\bf	17.43	&\bf	1160	&	5674	&	3154623	&\bf	3	&	5249	&\bf	284	\\
	&		&	10	&	---	&(	1	)&	---	&(	1	)&	100.00	&\bf	14.01	&	100.00	&\bf	13.91	&\bf	2170	&	5463	&	567705	&\bf	7	&	10827	&\bf	156	\\
\cline{2-17}	&	\multirow{3}{*}{D}	&	2	&	---	&(	1	)&\bf	405.36	&\bf(	0	)&	100.00	&\bf	0.00	&	47.33	&\bf	0.00	&\bf	504	&	9675	&	4367022	&\bf	1	&	6329	&\bf	1789	\\
	&		&	5	&	---	&(	1	)&\bf	209.77	&\bf(	0	)&	100.00	&\bf	0.00	&	94.73	&\bf	0.00	&\bf	1160	&	5302	&	2598004	&\bf	1	&	4692	&\bf	469	\\
	&		&	10	&	---	&(	1	)&\bf	605.10	&\bf(	0	)&	100.00	&\bf	0.00	&	100.00	&\bf	0.00	&\bf	2170	&	5000	&	548086	&\bf	1	&	7504	&\bf	233	\\
\cline{2-17}	&	\multirow{3}{*}{S}	&	2	&	---	&(	1	)&\bf	451.37	&\bf(	0	)&	100.00	&\bf	0.00	&	48.39	&\bf	0.00	&\bf	504	&	10286	&	3512436	&\bf	1	&	6039	&\bf	1986	\\
	&		&	5	&	---	&(	1	)&\bf	815.35	&\bf(	0	)&	100.00	&\bf	0.07	&	96.27	&\bf	0.00	&\bf	1160	&	5236	&	3055646	&\bf	3	&	6241	&\bf	459	\\
	&		&	10	&	---	&(	1	)&\bf	204.63	&\bf(	0	)&	100.00	&\bf	0.01	&	100.00	&\bf	0.01	&\bf	2170	&	4900	&	386889	&\bf	1	&	9984	&\bf	231	\\
\cline{2-17}	&	\multirow{3}{*}{A}	&	2	&	---	&(	1	)&	---	&(	1	)&	100.00	&\bf	5.95	&	32.08	&\bf	5.36	&\bf	504	&	8326	&	1790159	&\bf	4	&	2035	&\bf	1513	\\
	&		&	5	&	---	&(	1	)&	---	&(	1	)&	100.00	&\bf	12.91	&	91.62	&\bf	11.92	&\bf	1160	&	5559	&	1874166	&\bf	7	&	3311	&\bf	483	\\
	&		&	10	&	---	&(	1	)&	---	&(	1	)&	100.00	&\bf	7.13	&	100.00	&\bf	6.88	&\bf	2170	&	5089	&	462300	&\bf	9	&	10198	&\bf	231	\\
\cline{1-17}\multicolumn{3}{c}{\bf Total Average:} 					&	673.68	&(	267	)&\bf	557.40	&\bf(	157	)&	96.65	&\bf	8.44	&	53.08	&\bf	6.79	&\bf	886	&	6179	&	2347788	&\bf	663	&	7186	&\bf	310	\\

\hline
\end{tabular}%
\end{adjustbox}
   \caption{Results for \cite{EWC74} instances for $\ell_2$-norm \label{tab:EilonL2}}}
\end{table}%

 \begin{table}
  \centering{
  \begin{adjustbox}{max width=0.9\textwidth}
  \begin{tabular}{cccrcrcrrrrrrrrrr}\hline
   $n$ & \texttt{type} &$p$ & \multicolumn{4}{c}{\texttt{Time (\#Unsolved)}} &  \multicolumn{2}{c}{\texttt{GAProot(\%)}} &  \multicolumn{2}{c}{\texttt{GAP(\%)}}  & \multicolumn{2}{c}{\texttt{Vars}} &  \multicolumn{2}{c}{\texttt{Nodes}}  &  \multicolumn{2}{c}{\texttt{Memory (MB)}}   \\
     \cmidrule(lr){1-3}\cmidrule(lr){4-7}\cmidrule(lr){8-9}\cmidrule(lr){10-11}\cmidrule(lr){12-13}\cmidrule(lr){14-15}\cmidrule(lr){16-17}
   &&&  \multicolumn{2}{c}{Compact} & \multicolumn{2}{c}{B\&P}&  Compact & B\&P & Compact & B\&P &Compact & B\&P&Compact & B\&P&Compact & B\&P\\
   \hline

\multirow{18}{*}{20}	&	\multirow{3}{*}{W}	&	2	&\bf	151.03	&\bf(	0	)&	743.17	&(	0	)&	100.00	&\bf	0.00	&	0.00	&	0.00	&	524	&\bf	471	&	56126	&\bf	1	&	418	&\bf	24	\\
	&		&	5	&	---	&(	5	)&\bf	267.56	&\bf(	0	)&	100.00	&\bf	0.00	&	62.95	&\bf	0.00	&	1270	&\bf	415	&	2700255	&\bf	1	&	13112	&\bf	14	\\
	&		&	10	&	---	&(	5	)&\bf	133.91	&\bf(	0	)&	100.00	&\bf	0.00	&	98.54	&\bf	0.00	&	2480	&\bf	406	&	1259405	&\bf	1	&	13418	&\bf	13	\\
\cline{2-17}	&	\multirow{3}{*}{C}	&	2	&\bf	0.91	&\bf(	0	)&	---	&(	5	)&	100.00	&\bf	30.56	&\bf	0.00	&	29.19	&\bf	524	&	9181	&	507	&\bf	417	&\bf	11	&	121	\\
	&		&	5	&\bf	71.86	&\bf(	0	)&	---	&(	5	)&	100.00	&\bf	35.58	&\bf	0.00	&	30.56	&\bf	1270	&	3061	&	17235	&\bf	1473	&	62	&\bf	32	\\
	&		&	10	&\bf	2142.39	&\bf(	0	)&	---	&(	5	)&	100.00	&\bf	47.17	&\bf	0.00	&	47.00	&	2480	&\bf	912	&	261767	&\bf	553	&	879	&\bf	8	\\
\cline{2-17}	&	\multirow{3}{*}{K}	&	2	&\bf	16.05	&\bf(	0	)&	5363.15	&(	2	)&	100.00	&\bf	9.14	&\bf	0.00	&	2.07	&\bf	524	&	12397	&	7592	&\bf	424	&\bf	40	&	352	\\
	&		&	5	&	---	&(	5	)&\bf	2946.11	&\bf(	3	)&	100.00	&\bf	14.87	&	38.13	&\bf	5.02	&\bf	1270	&	3466	&	2839547	&\bf	1556	&	8104	&\bf	75	\\
	&		&	10	&	---	&(	5	)&\bf	3206.51	&\bf(	2	)&	100.00	&\bf	8.25	&	93.62	&\bf	5.18	&	2480	&\bf	1348	&	1021924	&\bf	2630	&	6013	&\bf	23	\\
\cline{2-17}	&	\multirow{3}{*}{D}	&	2	&\bf	125.29	&\bf(	0	)&	810.43	&(	0	)&	100.00	&\bf	0.00	&	0.00	&	0.00	&	524	&\bf	498	&	53138	&\bf	1	&	300	&\bf	26	\\
	&		&	5	&	---	&(	5	)&\bf	230.99	&\bf(	0	)&	100.00	&\bf	0.00	&	60.85	&\bf	0.00	&	1270	&\bf	382	&	2732256	&\bf	1	&	12124	&\bf	11	\\
	&		&	10	&	---	&(	5	)&\bf	112.30	&\bf(	0	)&	100.00	&\bf	0.09	&	99.24	&\bf	0.00	&	2480	&\bf	377	&	1083407	&\bf	2	&	11367	&\bf	10	\\
\cline{2-17}	&	\multirow{3}{*}{S}	&	2	&\bf	104.35	&\bf(	0	)&	973.36	&(	0	)&	100.00	&\bf	0.05	&	0.00	&	0.00	&\bf	524	&	530	&	45754	&\bf	1	&	239	&\bf	27	\\
	&		&	5	&	---	&(	5	)&\bf	255.50	&\bf(	0	)&	100.00	&\bf	0.11	&	64.54	&\bf	0.00	&	1270	&\bf	398	&	2470849	&\bf	2	&	10763	&\bf	12	\\
	&		&	10	&	---	&(	5	)&\bf	125.04	&\bf(	0	)&	100.00	&\bf	0.26	&	99.51	&\bf	0.00	&	2480	&\bf	378	&	1131966	&\bf	4	&	11380	&\bf	10	\\
\cline{2-17}	&	\multirow{3}{*}{A}	&	2	&\bf	33.54	&\bf(	0	)&	2705.24	&(	1	)&	100.00	&\bf	4.22	&\bf	0.00	&	0.15	&\bf	524	&	3049	&	11374	&\bf	71	&\bf	74	&	150	\\
	&		&	5	&	---	&(	5	)&\bf	1648.71	&\bf(	2	)&	100.00	&\bf	8.18	&	42.12	&\bf	1.90	&\bf	1270	&	2262	&	2117999	&\bf	329	&	6961	&\bf	64	\\
	&		&	10	&	---	&(	5	)&\bf	4194.46	&\bf(	1	)&	100.00	&\bf	13.35	&	99.33	&\bf	1.41	&	2480	&\bf	1216	&	842028	&\bf	2054	&	8107	&\bf	29	\\
\cline{1-17}\multirow{18}{*}{30}	&	\multirow{3}{*}{W}	&	2	&	2404.31	&(	4	)&\bf	435.00	&\bf(	4	)&	86.64	&\bf	14.53	&\bf	12.54	&	14.53	&\bf	784	&	843	&	2130171	&\bf	1	&	10112	&\bf	65	\\
	&		&	5	&	---	&(	5	)&\bf	3934.80	&\bf(	2	)&	86.35	&\bf	5.57	&	77.53	&\bf	5.57	&	1900	&\bf	860	&	830494	&\bf	1	&	12706	&\bf	48	\\
	&		&	10	&	---	&(	5	)&\bf	2479.98	&\bf(	0	)&	86.89	&\bf	0.03	&	86.14	&\bf	0.00	&	3710	&\bf	816	&	306492	&\bf	3	&	4764	&\bf	37	\\
\cline{2-17}	&	\multirow{3}{*}{C}	&	2	&\bf	2.30	&\bf(	0	)&	1631.39	&(	4	)&	81.07	&\bf	29.54	&\bf	0.00	&	29.53	&\bf	784	&	7137	&\bf	675	&	7559	&\bf	17	&	94	\\
	&		&	5	&\bf	571.24	&\bf(	0	)&	2360.18	&(	4	)&	81.71	&\bf	48.42	&\bf	0.00	&	46.18	&\bf	1900	&	2388	&	71418	&\bf	386	&	212	&\bf	27	\\
	&		&	10	&\bf	8.07	&\bf(	4	)&	1801.17	&(	4	)&	81.93	&\bf	48.01	&	71.16	&\bf	47.51	&	3710	&\bf	1236	&	316323	&\bf	33	&	1003	&\bf	10	\\
\cline{2-17}	&	\multirow{3}{*}{K}	&	2	&\bf	211.31	&\bf(	0	)&	17.36	&(	4	)&	85.39	&\bf	24.11	&\bf	0.00	&	23.96	&\bf	784	&	1808	&	86793	&\bf	7	&	303	&\bf	81	\\
	&		&	5	&	---	&(	5	)&\bf	27.59	&\bf(	4	)&	86.11	&\bf	14.05	&	63.23	&\bf	13.64	&	1900	&\bf	1707	&	1143862	&\bf	12	&	9716	&\bf	52	\\
	&		&	10	&	---	&(	5	)&\bf	34.09	&\bf(	4	)&	85.92	&\bf	22.77	&	85.05	&\bf	22.13	&	3710	&\bf	1409	&	365197	&\bf	168	&	4413	&\bf	35	\\
\cline{2-17}	&	\multirow{3}{*}{D}	&	2	&	4446.26	&(	3	)&\bf	3777.09	&\bf(	3	)&	86.24	&\bf	18.03	&\bf	10.14	&	18.03	&\bf	784	&	946	&	2197143	&\bf	1	&	9689	&\bf	71	\\
	&		&	5	&	---	&(	5	)&\bf	3574.66	&\bf(	2	)&	86.38	&\bf	9.19	&	74.99	&\bf	9.19	&	1900	&\bf	853	&	958686	&\bf	1	&	12995	&\bf	45	\\
	&		&	10	&	---	&(	5	)&\bf	2472.72	&\bf(	0	)&	86.60	&\bf	0.06	&	85.43	&\bf	0.00	&	3710	&\bf	796	&	375899	&\bf	3	&	5968	&\bf	35	\\
\cline{2-17}	&	\multirow{3}{*}{S}	&	2	&\bf	3256.59	&\bf(	3	)&	305.27	&(	4	)&	86.54	&\bf	17.51	&\bf	9.82	&	17.51	&\bf	784	&	850	&	1889060	&\bf	1	&	8632	&\bf	70	\\
	&		&	5	&	---	&(	5	)&\bf	3697.87	&\bf(	1	)&	86.19	&\bf	1.29	&	78.49	&\bf	1.23	&	1900	&\bf	760	&	873284	&\bf	2	&	12851	&\bf	40	\\
	&		&	10	&	---	&(	5	)&\bf	2525.29	&\bf(	0	)&	87.92	&\bf	0.12	&	87.51	&\bf	0.00	&	3710	&\bf	791	&	351574	&\bf	4	&	4490	&\bf	32	\\
\cline{2-17}	&	\multirow{3}{*}{A}	&	2	&\bf	1365.87	&\bf(	0	)&	61.55	&(	4	)&	85.11	&\bf	21.30	&\bf	0.00	&	21.04	&\bf	784	&	1141	&	327100	&\bf	3	&	1255	&\bf	96	\\
	&		&	5	&	---	&(	5	)&\bf	11.83	&\bf(	4	)&	85.51	&\bf	7.43	&	72.18	&\bf	6.75	&	1900	&\bf	1251	&	643473	&\bf	8	&	9003	&\bf	67	\\
	&		&	10	&	---	&(	5	)&\bf	100.35	&\bf(	4	)&	85.84	&\bf	15.80	&	84.79	&\bf	13.75	&	3710	&\bf	1385	&	306764	&\bf	101	&	4475	&\bf	51	\\
\cline{1-17}\multirow{18}{*}{40}	&	\multirow{3}{*}{W}	&	2	&	---	&(	5	)&	---	&(	5	)&	100.00	&\bf	26.96	&	51.29	&\bf	26.96	&\bf	1044	&	1248	&	1344963	&\bf	1	&	16445	&\bf	121	\\
	&		&	5	&	---	&(	5	)&	---	&(	5	)&	100.00	&\bf	83.14	&	95.28	&\bf	83.14	&	2530	&\bf	1267	&	482105	&\bf	1	&	5041	&\bf	104	\\
	&		&	10	&	---	&(	5	)&	---	&(	5	)&	100.00	&\bf	92.94	&	100.00	&\bf	92.94	&	4940	&\bf	1350	&	188692	&\bf	1	&	3381	&\bf	85	\\
\cline{2-17}	&	\multirow{3}{*}{C}	&	2	&\bf	6.66	&\bf(	0	)&	---	&(	5	)&	100.00	&\bf	38.00	&\bf	0.00	&	37.95	&\bf	1044	&	2502	&	1762	&\bf	5	&\bf	22	&	37	\\
	&		&	5	&\bf	2658.74	&\bf(	2	)&	---	&(	5	)&	100.00	&\bf	81.92	&\bf	39.02	&	81.92	&	2530	&\bf	2235	&	434526	&\bf	1	&	3523	&\bf	25	\\
	&		&	10	&	---	&(	5	)&	---	&(	5	)&	100.00	&\bf	75.34	&	100.00	&\bf	75.34	&	4940	&\bf	1921	&	349858	&\bf	12	&	1348	&\bf	18	\\
\cline{2-17}	&	\multirow{3}{*}{K}	&	2	&\bf	4990.55	&\bf(	1	)&	---	&(	5	)&	100.00	&\bf	50.04	&\bf	3.02	&	50.04	&\bf	1044	&	2202	&	1651655	&\bf	1	&	5136	&\bf	113	\\
	&		&	5	&	---	&(	5	)&	---	&(	5	)&	100.00	&\bf	74.12	&	97.64	&\bf	74.12	&	2530	&\bf	2018	&	479613	&\bf	1	&	7167	&\bf	86	\\
	&		&	10	&	---	&(	5	)&	---	&(	5	)&	100.00	&\bf	69.68	&	100.00	&\bf	69.68	&	4940	&\bf	1934	&	210396	&\bf	3	&	2563	&\bf	72	\\
\cline{2-17}	&	\multirow{3}{*}{D}	&	2	&	---	&(	5	)&	---	&(	5	)&	100.00	&\bf	20.20	&	42.95	&\bf	20.20	&\bf	1044	&	1247	&	1505294	&\bf	1	&	12264	&\bf	125	\\
	&		&	5	&	---	&(	5	)&	---	&(	5	)&	100.00	&\bf	89.82	&	98.91	&\bf	89.82	&	2530	&\bf	1248	&	463628	&\bf	1	&	8186	&\bf	98	\\
	&		&	10	&	---	&(	5	)&	---	&(	5	)&	100.00	&\bf	90.59	&	100.00	&\bf	90.59	&	4940	&\bf	1350	&	153036	&\bf	1	&	5215	&\bf	87	\\
\cline{2-17}	&	\multirow{3}{*}{S}	&	2	&	---	&(	5	)&	---	&(	5	)&	100.00	&\bf	25.20	&	43.94	&\bf	25.20	&\bf	1044	&	1323	&	1360254	&\bf	1	&	12167	&\bf	126	\\
	&		&	5	&	---	&(	5	)&	---	&(	5	)&	100.00	&\bf	70.38	&	96.48	&\bf	70.38	&	2530	&\bf	1294	&	462667	&\bf	1	&	6780	&\bf	91	\\
	&		&	10	&	---	&(	5	)&	---	&(	5	)&	100.00	&\bf	84.09	&	100.00	&\bf	84.09	&	4940	&\bf	1334	&	139121	&\bf	1	&	6489	&\bf	81	\\
\cline{2-17}	&	\multirow{3}{*}{A}	&	2	&	---	&(	5	)&	---	&(	5	)&	100.00	&\bf	24.46	&\bf	22.07	&	24.46	&\bf	1044	&	1810	&	1460678	&\bf	1	&	5826	&\bf	191	\\
	&		&	5	&	---	&(	5	)&	---	&(	5	)&	100.00	&\bf	62.67	&	99.14	&\bf	62.67	&	2530	&\bf	1646	&	345643	&\bf	1	&	7108	&\bf	134	\\
	&		&	10	&	---	&(	5	)&	---	&(	5	)&	100.00	&\bf	85.25	&	100.00	&\bf	85.25	&	4940	&\bf	1767	&	155190	&\bf	1	&	3426	&\bf	113	\\
\cline{1-17}\multirow{18}{*}{45}	&	\multirow{3}{*}{W}	&	2	&	---	&(	5	)&	---	&(	5	)&	100.00	&\bf	26.30	&	54.64	&\bf	26.30	&\bf	1174	&	1398	&	1133697	&\bf	1	&	11416	&\bf	134	\\
	&		&	5	&	---	&(	5	)&	---	&(	5	)&	100.00	&\bf	92.43	&	99.51	&\bf	92.43	&	2845	&\bf	1413	&	401143	&\bf	1	&	5947	&\bf	113	\\
	&		&	10	&	---	&(	5	)&	---	&(	5	)&	100.00	&	100.00	&	100.00	&	100.00	&	5555	&\bf	1469	&	153297	&\bf	1	&	2343	&\bf	104	\\
\cline{2-17}	&	\multirow{3}{*}{C}	&	2	&\bf	6.40	&\bf(	0	)&	---	&(	5	)&	100.00	&\bf	46.67	&\bf	0.00	&	46.67	&\bf	1174	&	2326	&	1132	&\bf	2	&\bf	24	&	34	\\
	&		&	5	&\bf	5003.88	&\bf(	3	)&	---	&(	5	)&	100.00	&\bf	75.71	&\bf	60.00	&	75.71	&	2845	&\bf	2532	&	500043	&\bf	1	&	5891	&\bf	29	\\
	&		&	10	&	---	&(	5	)&	---	&(	5	)&	100.00	&\bf	75.66	&	100.00	&\bf	75.64	&	5555	&\bf	2236	&	171358	&\bf	7	&	722	&\bf	22	\\
\cline{2-17}	&	\multirow{3}{*}{K}	&	2	&	---	&(	5	)&	---	&(	5	)&	100.00	&\bf	44.43	&\bf	28.39	&	44.43	&\bf	1174	&	2477	&	1611616	&\bf	1	&	11832	&\bf	146	\\
	&		&	5	&	---	&(	5	)&	---	&(	5	)&	100.00	&\bf	80.52	&	99.07	&\bf	80.52	&	2845	&\bf	2482	&	426670	&\bf	1	&	6214	&\bf	157	\\
	&		&	10	&	---	&(	5	)&	---	&(	5	)&	100.00	&\bf	84.97	&	100.00	&\bf	84.97	&	5555	&\bf	2165	&	160867	&\bf	1	&	1944	&\bf	90	\\
\cline{2-17}	&	\multirow{3}{*}{D}	&	2	&	---	&(	5	)&	---	&(	5	)&	100.00	&\bf	24.77	&	54.89	&\bf	24.77	&\bf	1174	&	1406	&	1171833	&\bf	1	&	12252	&\bf	135	\\
	&		&	5	&	---	&(	5	)&	---	&(	5	)&	100.00	&\bf	98.15	&	98.67	&\bf	98.15	&	2845	&\bf	1412	&	424560	&\bf	1	&	4188	&\bf	113	\\
	&		&	10	&	---	&(	5	)&	---	&(	5	)&	100.00	&	100.00	&	100.00	&	100.00	&	5555	&\bf	1466	&	152963	&\bf	1	&	2822	&\bf	104	\\
\cline{2-17}	&	\multirow{3}{*}{S}	&	2	&	---	&(	5	)&	---	&(	5	)&	100.00	&\bf	23.69	&	56.00	&\bf	23.69	&\bf	1174	&	1479	&	1069851	&\bf	1	&	11291	&\bf	141	\\
	&		&	5	&	---	&(	5	)&	---	&(	5	)&	100.00	&\bf	72.98	&	99.95	&\bf	72.98	&	2845	&\bf	1450	&	514193	&\bf	1	&	5809	&\bf	107	\\
	&		&	10	&	---	&(	5	)&	---	&(	5	)&	100.00	&	100.00	&	100.00	&	100.00	&	5555	&\bf	1467	&	159205	&\bf	1	&	2847	&\bf	101	\\
\cline{2-17}	&	\multirow{3}{*}{A}	&	2	&	---	&(	5	)&	---	&(	5	)&	100.00	&\bf	24.73	&	37.56	&\bf	24.73	&\bf	1174	&	1731	&	695531	&\bf	1	&	6228	&\bf	178	\\
	&		&	5	&	---	&(	5	)&	---	&(	5	)&	100.00	&\bf	65.14	&	97.05	&\bf	65.14	&	2845	&\bf	1800	&	295359	&\bf	1	&	4504	&\bf	160	\\
	&		&	10	&	---	&(	5	)&	---	&(	5	)&	100.00	&\bf	95.05	&	100.00	&\bf	95.05	&	5555	&\bf	2027	&	131122	&\bf	1	&	1477	&\bf	172	\\
\cline{1-17}\multirow{18}{*}{50}	&	\multirow{3}{*}{W}	&	2	&	---	&(	1	)&	---	&(	1	)&	100.00	&\bf	75.57	&\bf	61.16	&	75.57	&\bf	1304	&	1616	&	1047768	&\bf	1	&	11081	&\bf	161	\\
	&		&	5	&	---	&(	1	)&	---	&(	1	)&	100.00	&\bf	99.83	&	100.00	&\bf	99.83	&	3160	&\bf	1623	&	406553	&\bf	1	&	2969	&\bf	141	\\
	&		&	10	&	---	&(	1	)&	---	&(	1	)&	100.00	&	100.00	&	100.00	&	100.00	&	6170	&\bf	1709	&	186637	&\bf	1	&	934	&\bf	128	\\
\cline{2-17}	&	\multirow{3}{*}{C}	&	2	&\bf	11.71	&\bf(	0	)&	---	&(	1	)&	100.00	&\bf	62.36	&\bf	0.00	&	62.36	&\bf	1304	&	2575	&	2112	&\bf	1	&	38	&\bf	37	\\
	&		&	5	&	---	&(	1	)&	---	&(	1	)&	100.00	&\bf	90.14	&\bf	87.76	&	90.14	&	3160	&\bf	2577	&	540242	&\bf	1	&	1441	&\bf	34	\\
	&		&	10	&	---	&(	1	)&	---	&(	1	)&	100.00	&\bf	99.12	&	100.00	&\bf	99.12	&	6170	&\bf	2587	&	112265	&\bf	1	&	572	&\bf	26	\\
\cline{2-17}	&	\multirow{3}{*}{K}	&	2	&	---	&(	1	)&	---	&(	1	)&	100.00	&\bf	36.29	&	46.27	&\bf	36.29	&\bf	1304	&	2810	&	957166	&\bf	1	&	10628	&\bf	164	\\
	&		&	5	&	---	&(	1	)&	---	&(	1	)&	100.00	&\bf	91.13	&	98.89	&\bf	91.13	&	3160	&\bf	2450	&	434558	&\bf	1	&	4069	&\bf	127	\\
	&		&	10	&	---	&(	1	)&	---	&(	1	)&	100.00	&\bf	90.03	&	100.00	&\bf	90.03	&	6170	&\bf	2410	&	142538	&\bf	1	&	1231	&\bf	108	\\
\cline{2-17}	&	\multirow{3}{*}{D}	&	2	&	---	&(	1	)&	---	&(	1	)&	100.00	&\bf	31.05	&	63.12	&\bf	31.05	&\bf	1304	&	1666	&	878351	&\bf	1	&	10528	&\bf	162	\\
	&		&	5	&	---	&(	1	)&	---	&(	1	)&	100.00	&\bf	74.10	&	100.00	&\bf	74.10	&	3160	&\bf	1695	&	285611	&\bf	1	&	4410	&\bf	134	\\
	&		&	10	&	---	&(	1	)&	---	&(	1	)&	100.00	&	100.00	&	100.00	&	100.00	&	6170	&\bf	1777	&	83410	&\bf	1	&	545	&\bf	129	\\
\cline{2-17}	&	\multirow{3}{*}{S}	&	2	&	---	&(	1	)&	---	&(	1	)&	100.00	&\bf	36.71	&	60.33	&\bf	36.71	&\bf	1304	&	1577	&	983547	&\bf	1	&	10718	&\bf	148	\\
	&		&	5	&	---	&(	1	)&	---	&(	1	)&	100.00	&\bf	94.34	&	99.55	&\bf	94.34	&	3160	&\bf	1635	&	583530	&\bf	1	&	2230	&\bf	129	\\
	&		&	10	&	---	&(	1	)&	---	&(	1	)&	100.00	&	100.00	&	100.00	&	100.00	&	6170	&\bf	1737	&	153870	&\bf	1	&	775	&\bf	135	\\
\cline{2-17}	&	\multirow{3}{*}{A}	&	2	&	---	&(	1	)&	---	&(	1	)&	100.00	&\bf	24.67	&	48.59	&\bf	24.67	&\bf	1304	&	2258	&	514472	&\bf	1	&	5470	&\bf	226	\\
	&		&	5	&	---	&(	1	)&	---	&(	1	)&	100.00	&\bf	75.58	&	100.00	&\bf	75.58	&	3160	&\bf	1998	&	403520	&\bf	1	&	2368	&\bf	182	\\
	&		&	10	&	---	&(	1	)&	---	&(	1	)&	100.00	&\bf	91.23	&	100.00	&\bf	91.23	&	6170	&\bf	2051	&	117660	&\bf	1	&	1968	&\bf	170	\\
\cline{1-17}\multicolumn{3}{c}{\bf Total Average:} 					&	928.16	&(	292	)&\bf	1680.95	&\bf(	276	)&	96.54	&\bf	41.27	&	61.20	&\bf	40.52	&	2451	&\bf	1819	&	711082	&\bf	237	&	5823	&\bf	82	\\

\hline
\end{tabular}%
\end{adjustbox}
   \caption{Results for \cite{EWC74} instances for $\ell_3$-norm \label{tab:EilonL3}}}
\end{table}%

\end{appendices}

\end{document}